\pgfplotsset{compat=1.9}
\DeclareMathOperator{\trace}{trace} % my operator ``trace''
\crefname{hypothesis}{Hypothesis}{Hypotheses}
\title{Multifidelity Dimension Reduction\\ via Active Subspaces\thanks{Submitted to the editors on 09/14/2018.
\funding{This work was funded in part by the AFOSR MURI on multi-information sources of multi-physics systems under Award Number FA9550-15-1-0038, program manager Dr.\ Fariba Fahroo.}}}
\author{Remi R.\ Lam\thanks{Department of Aeronautics and Astronautics, Massachusetts Institute of Technology, Cambridge, MA 02139
  	(\email{rlam@mit.edu}, \email{ymarz@mit.edu}).}
\and Olivier Zahm\thanks{Universit\'e Grenoble Alpes, Inria, CNRS, Grenoble INP, LJK, 38000 Grenoble, France 
  	(\email{olivier.zahm@inria.fr}).}
\and Youssef M.\ Marzouk\footnotemark[2]
\and Karen E.\ Willcox\thanks{Oden Institute for Computational Engineering and Sciences, UT Austin, Austin, TX 78712 (\email{kwillcox@oden.utexas.edu})}}
\begin{document}

\maketitle

\newcommand{\design }{\textcolor{black}{\boldsymbol{x}}}
\newcommand{\iter}{\textcolor{black}{n}}
\newcommand{\stage}{\textcolor{black}{k}}
\newcommand{\obj}{\textcolor{black}{f}}
\newcommand{\obs}{\textcolor{black}{y}}
\newcommand{\OBS}{\textcolor{black}{Y}}
\newcommand{\surr}{\textcolor{black}{\overline{\mu}}}
\newcommand{\svar}{\textcolor{black}{\overline{\sigma}}}
\newcommand{\trainset}{\textcolor{black}{\mathcal{S}}}
\newcommand{\hyp}{\textcolor{black}{\boldsymbol{\theta}}}
\newcommand{\dimension}{\textcolor{black}{d}}
\newcommand{\maxiter}{\textcolor{black}{N}}
\newcommand{\normal}{\textcolor{black}{\mathcal{N}}}
\newcommand{\expect}{\textcolor{black}{\mathbb{E}}}
\newcommand{\utility}{\textcolor{black}{U}}
\newcommand{\utilityfunc}{\textcolor{black}{\utility_{\iter}}}
\newcommand{\Pol}{\textcolor{black}{\boldsymbol{\pi}}}
\newcommand{\pol}{\textcolor{black}{\pi}}

\newcommand{\feasibletarget}{\obj_{best}^{\trainset_{\textcolor{black}{k}}}}
\newcommand{\cindex}{i}
\newcommand{\nc}{I}
\newcommand{\disc}{\gamma}
\newcommand{\constraint}{g}
\newcommand{\argminx}{\operatornamewithlimits{\text{argmin}}\limits_{\design\in\mathcal{X}} }
\newcommand{\disturb}{\boldsymbol{w}}
\newcommand{\rdesign}{\boldsymbol{x}^{*}}
\newcommand{\fpenalty}{\Psi}

\newcommand{\grad}{\nabla f}
\newcommand{\approxfunc}{g}
\newcommand{\gradapprox}{\nabla g}
\newcommand{\density}{\rho}
\newcommand{\nfid}{k}
\newcommand{\inputspace}{\mathcal{X}}
\newcommand{\aspace}{\inputspace_{r}}
\newcommand{\proba}{\mathbb{P}}

\newcommand{\matrixH}{H}
\newcommand{\Hhat}{\widehat{H}}
\newcommand{\matrixG}{G}
\newcommand{\intdim}{\delta_H}
\newcommand{\varstat}{v}

\newcommand{\normH}{\Vert\matrixH\Vert}
\newcommand{\normE}{\expect[\Vert\grad(X)\Vert^{2}]}
\newcommand{\lambdav}{\boldsymbol{\lambda}}
\newcommand{\unispace}{\mathcal{X}}
\newcommand{\mleq}{\preccurlyeq}
\newcommand{\mgeq}{\succcurlyeq}
\newcommand{\sampvec}{\boldsymbol{m}}
\newcommand{\randd}{X}

\newcommand{\HMF}{\widehat{\matrixH}_{MF}}
\newcommand{\HSF}{\widehat{\matrixH}_{SF}}

\newcommand{\vSF}{v_{\!S\!F}}
\newcommand{\LSF}{L_{\!S\!F}}
\newcommand{\vMF}{v_{\!M\!F}}
\newcommand{\LMF}{L_{\!M\!F}}

\newcommand{\real}{\mathbb{R}}

\newcommand{\rkhs}{\mathcal{H}_{\kappa}}
\newcommand{\R}{\mathbb{R}}
\newcommand{\ridge}{\mathcal{R}}
\newcommand{\gradridge}{\nabla\mathcal{R}}
\newtheorem{property}[theorem]{Property}

% REQUIRED
\begin{abstract}
	We propose a multifidelity dimension reduction method to identify a low-dimensional structure present in many engineering models.
	The structure of interest arises when functions vary primarily on a low-dimensional subspace of the high-dimensional input space, while varying little along the complementary directions.
	Our approach builds on the gradient-based methodology of active subspaces, and exploits models of different fidelities to reduce the cost of performing dimension reduction
	through the computation of the active subspace matrix.
	We provide a non-asymptotic analysis of the number of gradient evaluations sufficient to achieve a prescribed error 
	in the active subspace matrix,
	both in expectation and with high probability.
	We show that the sample complexity depends on a notion of intrinsic dimension of the problem, which can be much smaller than the dimension of the input space.
	We illustrate the benefits of such a multifidelity dimension reduction approach using numerical experiments with input spaces of up to two thousand dimensions.
\end{abstract}

% REQUIRED
\begin{keywords}
  Dimension reduction, multifidelity, gradient-based, active subspace, intrinsic dimension, effective rank, matrix Bernstein inequality, control variate.
\end{keywords}

% REQUIRED
\begin{AMS}{
  15A18, 15A60, 41A30, 41A63, 65D15, 65N30}
\end{AMS}

\section{Introduction}
\label{sec:introduction}
Engineering models are typically parameterized by a large number of input variables, and can also be expensive to evaluate. 
Yet these models are often embedded in problems of global optimization or uncertainty quantification, whose computational cost and complexity increase dramatically with the number of model inputs.
One strategy to circumvent this \textit{curse of dimensionality} is to exploit, when present, some notion of low-dimensional structure and to perform \textit{dimension reduction}. 
Doing so can significantly reduce the complexity of the problem at hand.
In this paper, we consider the problem of identifying the low-dimensional structure that arises when an output of a model varies primarily on a low-dimensional subspace of the input space, while varying little along the complementary directions.
This structure is commonly found in engineering problems and can be identified using the \textit{active subspace} method \cite{russi2010uncertainty,constantine2015active}, among other methods.
The active subspace method
relies on the computation of a second moment matrix, a step that can be costly as it often involves many evaluations of the gradient of the model. 
In this work, we consider the common engineering setting where cheap low-fidelity approximations of an expensive high-fidelity model, and its gradients, are available. 
We propose a \textit{multifidelity} gradient-based algorithm to reduce the cost of performing dimension reduction via active subspaces. In particular, we present a multifidelity estimator of the second moment matrix used by the active subspace method and show, theoretically and empirically, that fewer evaluations of the expensive gradient are sufficient to perform dimension reduction.

Several approaches have been devised to identify low-dimensional structure in the input space of a function.
These methods include global sensitivity analysis \cite{saltelli2008global}, sliced inverse regression \cite{li1991sliced}, basis adaptation \cite{tipireddy2014basis}, and low-rank matrix recovery \cite{tyagi2014learning}.
Recent work has also explored combining dimension reduction in both the input and the state space of the associated model \cite{lieberman2010parameter,himpe2015data,salmoiraghi2016isogeometric,cui2016scalable,tezzele2018combined,ballarin2019pod}.
Such methods typically require a large number of (potentially expensive) function evaluations.
When derivative information is available (e.g., via adjoint methods or automatic differentiation), 
gradient-based methods have also been proposed to detect the low-dimensional structure of a smooth function, with higher sample efficiency \cite{samarov1993exploring}. 
One way to leverage derivative information is to examine the spectral properties of the second moment matrix of the gradient of the function. 
The dominant eigenspace of that matrix contains the directions along which the function, loosely speaking, varies the most.
This dominant eigenspace is called the \textit{active subspace} \cite{russi2010uncertainty,constantine2014active,constantine2015active}. 
More precisely, in \cite{zahm2018gradient}, the second moment matrix is used to construct an upper bound for the function approximation error induced by dimension reduction.
The active subspace's dimension is then chosen in order to satisfy a user-defined tolerance, allowing a rigorous control of the approximation error.
Gradient-based methods have been successfully used to detect and exploit low-dimensional structure in engineering models \cite{lukaczyk2014active,jefferson2017exploring,constantine2017time,ji2018shared} as well as in Bayesian inverse problems \cite{cui2014likelihood,constantine2016accelerating,zahm2018certified}. 
The efficiency of these gradient-based methods depends upon the computation of the second moment matrix of the gradient.
This can be an expensive step as it involves computing an integral, over the high-dimensional input space, of the gradient of an expensive function.
Reducing the cost of the dimension reduction step is particularly important as it allows more computational resources to be allocated to the original task of interest (e.g., optimization or uncertainty quantification).

To reduce this computational cost, one strategy consists of replacing the expensive gradient with a cheap-to-evaluate approximation or surrogate. 
Surrogates with lower evaluation cost are widely available in engineering problems:
they include models defined by numerically solving equations on coarser meshes, using simplified governing equations, imposing looser convergence criteria, or employing reduced-order models.
In order to control the error induced by the use of a surrogate, \textit{multifidelity methods} aim at combining cheap approximations with expensive but accurate information in an optimal way (see \cite{PWG17MultiSurvey} for a survey).
The goal of such approaches is to shift most of the work to the cheaper model, while querying the expensive model often enough to guarantee convergence to the desired quantity (in this case, the second moment matrix of the gradient).
For instance, multigrid methods use a hierarchy of cheaper and coarser discretizations to solve systems of partial differential equations more efficiently \cite{brandt1977multi,briggs2000multigridSIAM,hackbusch2013multi}.
In multilevel Monte Carlo, expected quantities and rare event probabilities are computed by distributing the computational work among several levels of approximation with known error rate and cost \cite{giles2008multilevel,teckentrup2015multilevel,ullmann2015multilevel,kuo2017multilevel,beskos2018smc}. 
When no such information about error rates is available, or when there is no hierarchy among models, multifidelity techniques have been employed to accelerate Monte Carlo estimates \cite{peherstorfer2016optimal} by solving an optimal resource allocation problem among a collection of models with varying fidelity.
Multifidelity techniques have also been devised to accelerate optimization \cite{alexandrov2001approximation,forrester2007multi,march2012provably,swersky2013multi,lam2015multifidelity,poloczek2017multi,kandasamy2017multi}, global sensitivity analysis \cite{qian2018multifidelity}, or importance sampling and rare event estimation \cite{li2010evaluation,li2011efficient,peherstorfer2017combining,peherstorfer2016multifidelity}.
While most multifidelity techniques have focused on estimating the expectations of scalar quantities, high-dimensional objects such as the second moment matrix in the active subspace method---effectively, the expectation of a matrix-valued function---have received less attention.
Because high-dimensional objects are typically more challenging to approximate, developing and analyzing multifidelity algorithms for their estimation could lead to significant computational savings. 

In this paper, we use multifidelity techniques to reduce the computational cost of performing dimension reduction. 
We build on the gradient-based active subspace method, proposing a multifidelity estimator for the second moment matrix that uses the low-fidelity model as a control variate for the outputs of the high-fidelity model---thus providing variance reduction and reducing computational costs. 
We establish non-asymptotic error bounds for this estimator, both in expectation and in high probability. We show that the sample complexity depends on the intrinsic dimension of the second moment matrix, a quantity that can be much smaller than the dimension of the input space when the function of interest varies mostly along a few directions. 
Finally, we demonstrate the performance of our proposed multifidelity dimension reduction technique on several analytical and engineering examples.

The paper is organized as follows. In Section \ref{sec:active_subspace}, we give a brief review of the active subspace methodology. 
Then, we formalize the proposed active subspace multifidelity algorithm in Section \ref{sec:multifidelity_active_subspace}.
Error bounds for the single-fidelity and multifidelity active subspace algorithms are provided in Section \ref{sec:theoretical_analysis}.
We illustrate the benefits of our approach with numerical examples in Section \ref{sec:numerical_results} before summarizing our findings in Section \ref{sec:conclusions}.

\section{Active subspace}
\label{sec:active_subspace}
We consider a scalar-valued function $\obj:\inputspace\to\R$ where the input space $\inputspace$ is a subset of $\R^{\dimension}$. 
We refer to the dimension $\dimension\in\mathbb{N}$ as the \textit{ambient dimension}.
The active subspace method \cite{constantine2014active,constantine2015active} aims to compute a low-dimensional subspace of $\inputspace$ in which most of the variations of $\obj$ are concentrated.
The active subspace method assumes that $\obj$ is differentiable and that each component of $\grad$ is square integrable on the space $\inputspace$, weighted by a user-defined probability density $\rho:\inputspace\to\R^{+}$. 
This guarantees the well posedness of the second moment matrix
\begin{align*}\label{eq:ASmatrixH}
  \matrixH = \expect\left[ \grad(X)\grad(X)^{T}\right],
\end{align*}
where $X\sim\rho$ is a random variable taking values in $\inputspace$ and $\expect[\,\cdot\,]$ denotes the expectation.
We refer to $\matrixH$ as the active subspace matrix (AS matrix).
The eigendecomposition of $\matrixH$ yields information about the directions along which $\obj$ varies.
Specifically, for any unit norm vector $u\in\R^{\dimension}$, the quantity
$u^{T}\matrixH u = \expect[(\grad(X)^{T}u)^2]$ corresponds to the $L^{2}$ norm of the gradient $\grad$ projected on $\text{span}\{u\}$. 
Thus, the largest eigenvector of $\matrixH$, which is a maximizer of $u^{T}\matrixH u$ over unit norm vectors $u\in\R^{\dimension}$, is aligned with the direction in which $\obj$ has largest (in squared magnitude) average derivative. 

Another important property is that, under some mild assumptions on the probability density $\rho$, the AS matrix allows us to control the mean square error between $\obj(X)$ and a ridge approximation of the form of $h(U_{r}^{T}X)$, 
where $U_{r}\in\mathbb{R}^{\dimension \times r}$ is a matrix with $r\leq\dimension$ orthonormal columns. 
In particular, if $h$ is defined to be the conditional expectation
$h(U_{r}^{T}X) = \expect[f(X) \vert U_{r}^{T}X]$, 
$\inputspace=\mathbb{R}^{\dimension}$, and $\rho$ is the density of the standard normal distribution on $\inputspace$, then Proposition 2.5 in \cite{zahm2018gradient} (with $P_{r}=U_{r}U_{r}^{T}$) guarantees that
\begin{equation}\label{eq:AScontrol}
 \expect [ ( \obj(X) - h(U_{r}^{T}X) )^{2} ] \leq \trace(H) - \trace(U_{r}^{T} \matrixH U_{r}) ,
\end{equation}
holds for any $U_{r}$ such that $U_{r}^{T}U_{r}=I_{r}$.
This result relies on Poincar\'e-type inequalities and can be extended to more general densities $\rho$ (see Corollary 2 in \cite{zahm2018certified}). 
In order to obtain a good approximation of $\obj$ in the $L^{2}$ sense, we can choose $U_{r}$ as a matrix which minimizes the right-hand side of \eqref{eq:AScontrol}. 
This is equivalent to the problem
\begin{equation}\label{eq:ASmaxtrace}
 \max_{\substack{U_{r}\in\mathbb{R}^{\dimension\times r} \\ \text{s.t. } U_{r}^{T} U_{r} = I_{r}}} 
 \trace(U_{r}^{T} \matrixH U_{r}).
\end{equation}
Any matrix $U_{r}$ whose columns span the $r$-dimensional dominant eigenspace of $\matrixH$ is a solution.
The corresponding subspace is called the \textit{active subspace}.

In practice, there is no closed-form expression for the AS matrix and $\matrixH$ must be approximated numerically. 
The following Monte Carlo estimator requires evaluating $\grad$ at $m_{1}$ realizations of the  input parameters, drawn independently from $\rho$. 
We refer to this estimator as a single-fidelity estimator (SF estimator).

\begin{definition}[Single-fidelity estimator]\label{def:SF_definition}
  Let $m_{1}\geq1$ be the number of gradient evaluations.
  We define the SF estimator of $\matrixH$ to be
  \begin{align*}
    \HSF =& \frac{1}{m_{1}}\sum_{i=1}^{m_{1}}\grad(\randd_{i})\grad(\randd_{i})^{T},
  \end{align*}
  where $\randd_{1},\ldots,\randd_{m_{1}}$ are independent copies of $X\sim\rho$.
\end{definition}

Computing an estimate of $\matrixH$ with a satisfactory error can require
a large number $m_{1}$ of gradient evaluations.
In the following section, we propose a new multifidelity algorithm that leverages a cheap-to-evaluate approximation of $\grad$ to reduce the cost of estimating $\matrixH$.

\section{Multifidelity dimension reduction}
\label{sec:multifidelity_active_subspace}
In this section, we describe a multifidelity approach for estimating the AS matrix $\matrixH$ (Sec.~\ref{sub:multifidelity_active_subspace_estimator}). We also characterize the impact of using such an approximation of $\matrixH$ on the quality of the dimension reduction (Sec.~\ref{sub:relationship_to_function_approximation}). 

\subsection{Multifidelity active subspace estimator} % (fold)
\label{sub:multifidelity_active_subspace_estimator}

Suppose we are given a function $\approxfunc:\inputspace\to\R$ that is a cheap-to-evaluate approximation of $\obj$. 
We assume that $\approxfunc$ is differentiable and that each component of $\gradapprox$ is square integrable. 
From now on, we refer to $\obj$ as the \textit{high-fidelity} function and to $\approxfunc$ as the \textit{low-fidelity} function.
Based on the identity
$$
 H = \expect[\grad(X)\grad(X)^T - \gradapprox(X)\gradapprox(X)^T] + \expect[\gradapprox(X)\gradapprox(X)^T],
$$
we introduce the following unbiased multifidelity estimator (MF estimator).
\begin{definition}[Multifidelity estimator]\label{def:MF_definition}
  Let $m_{1}\geq1$ and $m_{2}\geq1$ be the numbers of gradient evaluations of $\obj$ and $\approxfunc$.
  We define the MF estimator of $\matrixH$ to be:
  \begin{align*}
    \HMF &= \frac{1}{m_{1}}\sum_{i=1}^{m_{1}}(\grad(\randd_{i})\grad(\randd_{i})^{T}-\gradapprox(\randd_{i})\gradapprox(\randd_{i})^{T} ) 
    +\frac{1}{m_{2}}\sum_{i=m_{1}+1}^{m_{1}+m_{2}}\gradapprox(\randd_{i})\gradapprox(\randd_{i})^{T},
  \end{align*}
  where $\randd_{1},\ldots,\randd_{m_{1}+m_{2}}$ are independent copies of $X\sim\rho$.
\end{definition}

\begin{remark}[Indefiniteness of $\HMF$]
While the quantity of interest $\matrixH$ is symmetric positive semi-definite, the multifidelity estimator $\HMF$ is symmetric but not necessarily positive semi-definite.
It is natural to ask whether a positive semi-definite estimator is necessary to yield good dimension reduction.
In the following, we show that the quality of the dimension reduction is controlled by the error between $\matrixH$ and $\HMF$ (Corollary~\ref{prop:ControlHatUcorollary}) which can be reduced arbitrarily close to zero with high probability (Proposition~\ref{prop:MainResult}).
In particular, those results do not require positive semi-definiteness from the estimator $\HMF$.
\end{remark}

A realization of $\HMF$ can be obtained using Algorithm~\ref{alg:MFMonteCarlo}. 
First, $m_{1}+m_{2}$ input parameter realizations are drawn independently from $\rho$. 
Then, the high-fidelity gradients are evaluated at the first $m_{1}$ input parameter values while the low-fidelity gradients are evaluated at all $m_{1}+m_{2}$ input parameter values.

\begin{algorithm}[H]
\caption{Multifidelity Active Subspace}
\label{alg:MFMonteCarlo}
\begin{algorithmic}
\STATE {\bfseries Function:} \texttt{multifidelity\_active\_subspace}$(m_{1}, m_{2})$
\STATE {\bfseries Input:} $m_{1}$ and $m_{2}$
\STATE Draw $m_1+m_2$ independent copies $\{X_{i}\}_{i=1}^{m_1+m_2}$ of $X\sim \rho$ 
\FOR{$i=1$ {\bfseries to} $m_{1}$}
\STATE Compute $\grad(X_{i})$ and $\gradapprox(X_{i})$
\ENDFOR
\STATE $\HMF\gets \frac{1}{m_{1}}\sum_{i=1}^{m_{1}}(\grad(X_{i})\grad(X_{i})^{T}-\gradapprox(X_{i})\gradapprox(X_{i})^{T})$
\FOR{$i=1$ {\bfseries to} $m_{2}$}
\STATE Compute $\gradapprox(X_{m_{1}+i})$
\ENDFOR
\STATE $\HMF\gets \HMF + \frac{1}{m_{2}}\sum_{i=m_{1}+1}^{m_{1}+m_{2}}\gradapprox(X_{i})\gradapprox(X_{i})^{T}$
\STATE {\bfseries Output: $\HMF$}
\end{algorithmic}
\end{algorithm}

The definition of the proposed MF estimator of the AS matrix uses the low-fidelity gradient to construct a control variate $\gradapprox(X)\gradapprox(X)^{T}$ for $\grad(X)\grad(X)^{T}$.
The MF estimator is written as the sum of two terms.
The first one involves $m_{1}$ evaluations of the low-fidelity and high-fidelity gradients.
This is an expensive quantity to compute, so the number of samples $m_{1}$ is typically set to a low value.
Note that if $\gradapprox$ is a good approximation of $\grad$,
then the control variate $\gradapprox(X)\gradapprox(X)^{T}$ is highly correlated with $\grad(X)\grad(X)^{T}$ 
and the first term of the estimator has low variance (in a sense yet to be made precise for matrices).
The low variance of $\grad(X)\grad(X)^T - \gradapprox(X)\gradapprox(X)^T$ allows for a good estimator of $\expect[\grad(X)\grad(X)^T - \gradapprox(X)\gradapprox(X)^T]$ despite the small number of samples $m_{1}$.
The second term involves $m_{2}$ evaluations of the cheap low-fidelity gradient.
Thus, $m_{2}$ can usually be set to a large value, allowing for a good estimation of $\expect[\gradapprox(X)\gradapprox(X)^T]$ despite the possibly large variance of $\gradapprox(X)\gradapprox(X)^T$.
Combining the two terms, the MF estimator $\HMF$ provides a good approximation of $\matrixH$ with few evaluations of the expensive high-fidelity gradient $\grad$.
In Section \ref{sec:theoretical_analysis}, we make this statement precise by providing an analysis of the error between $\matrixH$ and $\HMF$ as a function of the number of samples $m_{1}$ and $m_{2}$.

\subsection{Relationship to function approximation} % (fold)
\label{sub:relationship_to_function_approximation}

The  performance of our MF estimator (or that of any estimator for $H$) should be analyzed with respect to the end goal of the problem which, in this paper, is to perform dimension reduction.
Computing a good approximation of $\matrixH$ is an \textit{intermediate step} in the dimension reduction process.
To further motivate the use of a MF estimator to reduce the difference between $\matrixH$ and $\HMF$ at low cost, we show how this matrix error impacts the quality of the dimension reduction.
As shown in Section \ref{sec:active_subspace},
one way of performing dimension reduction is to minimize a bound on the function approximation error \eqref{eq:AScontrol}. 
This corresponds to maximizing $U_{r}\mapsto \trace(U_{r}^{T} \matrixH U_{r})$.
Replacing the unknown $\matrixH$ by $\HMF$, we can compute the matrix $\widehat{U}_{r}$ defined by
\begin{equation}\label{eq:ASmaxtraceHMF}
 \widehat U_{r} \in \underset{\substack{U_{r}\in\mathbb{R}^{\dimension\times r} \\ \text{s.t. } U_{r}^{T} U_{r} = I_{r}}} {\text{argmax}}
 \trace(U_{r}^{T} \HMF U_{r}),
\end{equation}
and ask how does $\trace(\widehat U_{r}^{T} \matrixH \widehat U_{r})$ compare to the maximal value of $\trace(U_{r}^{T} \matrixH U_{r})$ over all $U_{r}\in\mathbb{R}^{\dimension\times r}$ such that $U_{r}^{T} U_{r} = I_{r}$.
By definition, we have the inequality in the following direction
$$
 \max_{\substack{U_{r}\in\mathbb{R}^{\dimension\times r} \\ \text{s.t. } U_{r}^{T} U_{r} = I_{r}}} 
 \trace(U_{r}^{T} \matrixH U_{r})
 \geq \trace(\widehat U_{r}^{T} \matrixH \widehat U_{r}).
$$
The next proposition shows that the difference between the two terms of the previous inequality can be controlled by means of the error $\|H-\HMF\|$, where $\Vert\cdot\Vert$ denotes the matrix operator norm.
Note that the proof is not restricted to the MF estimator: the same result holds for any symmetric estimator of $H$.

\begin{proposition}\label{prop:ControlHatUgeneral}
 Let $\widehat{H}$ be a symmetric estimator of $\matrixH$
 and 
 \begin{equation}\label{eq:ASmaxtraceHMFgeneral}
 \widetilde{U}_{r}\in \underset{\substack{U_{r}\in\mathbb{R}^{\dimension\times r} \\ \text{s.t. } U_{r}^{T} U_{r} = I_{r}}} {\text{argmax}}
 \trace(U_{r}^{T} \widehat{H} U_{r}),
 \end{equation}
  then
\begin{equation}\label{eq:ControlHatUgeneral}
    \trace(\widetilde{U}_{r}^{T} \matrixH \widetilde{U}_{r}) 
  \geq
    \max_{\substack{U_{r}\in\mathbb{R}^{\dimension\times r} \\ \text{s.t. } U_{r}^{T} U_{r} = I_{r}}} 
    \trace(U_{r}^{T} \matrixH U_{r})
    -2r\|\matrixH-\widehat{H} \| .
 \end{equation}
\end{proposition}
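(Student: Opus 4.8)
The plan is a short matrix-perturbation argument resting on one elementary estimate: for any symmetric $\dimension\times\dimension$ matrix $A$ and any $U_r\in\R^{\dimension\times r}$ with $U_r^TU_r=I_r$, one has $|\trace(U_r^T A\, U_r)|\le r\,\|A\|$. To see this I would write $u_1,\dots,u_r$ for the orthonormal columns of $U_r$, so that $\trace(U_r^TA\,U_r)=\sum_{i=1}^r u_i^T A u_i$, and bound each summand by $|u_i^TA u_i|\le\|Au_i\|\,\|u_i\|\le\|A\|$ via Cauchy--Schwarz and the definition of the operator norm. (Equivalently, $\trace(U_r^TA\,U_r)=\trace(A\,P_r)$ with $P_r=U_rU_r^T$ a rank-$r$ orthogonal projector, and $|\trace(A\,P_r)|\le\|A\|\,\trace(P_r)=r\|A\|$ by trace duality.) The only place where the symmetry of $\widehat H$ matters is that it makes $U_r\mapsto\trace(U_r^T\widehat H U_r)$ a sensible objective and \eqref{eq:ASmaxtraceHMFgeneral} a well-posed maximization; no definiteness of $\widehat H$ is needed.

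Applying the estimate with $A=\matrixH-\widehat H$ shows that the two objectives $U_r\mapsto\trace(U_r^T\matrixH U_r)$ and $U_r\mapsto\trace(U_r^T\widehat H U_r)$ differ by at most $r\|\matrixH-\widehat H\|$ at \emph{every} feasible point. It then remains to chain three inequalities. Let $U_r^\star$ be a maximizer of $\trace(U_r^T\matrixH U_r)$ over $\{U_r:\,U_r^TU_r=I_r\}$. Using uniform closeness of the objectives at $\widetilde U_r$, then the optimality of $\widetilde U_r$ for $\widehat H$ given by \eqref{eq:ASmaxtraceHMFgeneral}, then uniform closeness at $U_r^\star$, I obtain
\begin{align*}
 \trace(\widetilde U_r^T\matrixH\widetilde U_r)
 &\ge \trace(\widetilde U_r^T\widehat H\widetilde U_r)-r\|\matrixH-\widehat H\|\\
 &\ge \trace((U_r^\star)^T\widehat H U_r^\star)-r\|\matrixH-\widehat H\|\\
 &\ge \trace((U_r^\star)^T\matrixH U_r^\star)-2r\|\matrixH-\widehat H\|.
\end{align*}
Since $\trace((U_r^\star)^T\matrixH U_r^\star)$ is exactly the maximum appearing on the right-hand side of \eqref{eq:ControlHatUgeneral}, this is the claimed bound.

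There is no real obstacle; the only thing to get right is the constant $r$, which enters because the estimator error is controlled in operator norm while the competing projectors $P_r$ have trace $r$. (A trace-norm, i.e. $L^2$-type, bound on $\matrixH-\widehat H$ would remove the factor $r$, but the operator norm is the quantity the Bernstein-type analysis of Section~\ref{sec:theoretical_analysis} naturally delivers, so I would keep the statement as is.)
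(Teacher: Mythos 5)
Your proof is correct and follows essentially the same route as the paper: both establish the uniform bound $|\trace(U_r^T(\matrixH-\widehat H)U_r)|\le r\|\matrixH-\widehat H\|$ (the paper via the eigendecomposition of $\matrixH-\widehat H$, you via a column-wise Cauchy--Schwarz argument, a trivial variation) and then chain the same three inequalities through the optimality of $\widetilde U_r$ for $\widehat H$.
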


\begin{proof}
 Consider the eigenvalue decomposition of $\matrixH-\Hhat = V\Sigma V^T$, where $\Sigma=\text{diag}\{\lambda_1,\hdots,\lambda_{\dimension}\}$ is a diagonal matrix containing the eigenvalues of $\matrixH-\Hhat$ and $V\in\mathbb{R}^{\dimension\times\dimension}$ is a unitary matrix. 
 For any matrix $U_r\in\mathbb{R}^{\dimension\times r}$ such that $U_r^TU_r=I_r$, we have
 \begin{align}
  | \trace( U_r^T \matrixH  U_r)  - \trace( U_r^T \Hhat  U_r) | 
  &= |\trace( U_r^T (\matrixH-\Hhat)  U_r)| \nonumber\\
  &= |\trace(\Sigma V^T  U_r  U_r^T V )|  \nonumber\\
  &\leq \max\{|\lambda_1|,\hdots,|\lambda_{\dimension}|\} ~ |\trace( V^T  U_r  U_r^T V )| \nonumber\\
  &= \|\matrixH-\Hhat\| ~ |\trace( U_r  U_r^T )| \nonumber\\
  &= r \|\matrixH-\Hhat\| . \label{eq:tmp185713}
 \end{align}
 Letting $U_r=\widetilde U_r$ in the above relation yields
 \begin{align*}
  \trace( \widetilde U_r^T \matrixH  \widetilde U_r) 
  &\overset{\eqref{eq:tmp185713}}{\geq} \trace( \widetilde U_r^T \Hhat  \widetilde U_r) - r \|\matrixH-\Hhat\| \\
  &\overset{\eqref{eq:ASmaxtraceHMFgeneral}}{\geq} \trace( U_r^T \Hhat  U_r) - r \|\matrixH-\Hhat\| \\
  &\overset{\eqref{eq:tmp185713}}{\geq} \trace( U_r^T \matrixH  U_r) - 2 r \|\matrixH-\Hhat\| .
 \end{align*}
 Maximizing over $U_r\in\mathbb{R}^{\dimension\times r}$, with $U_r^TU_r=I_r$, yields \eqref{eq:ControlHatUgeneral} and concludes the proof.
\end{proof}

\begin{corollary}\label{prop:ControlHatUcorollary}
Let $\HMF$ be a MF estimator of $\matrixH$ and $\widehat{U}_{r}$ be defined by \eqref{eq:ASmaxtraceHMF}. Then
 \begin{equation}\label{eq:ControlHatUcorollary}
    \trace(\widehat U_{r}^{T} \matrixH \widehat U_{r}) 
  \geq
    \max_{\substack{U_{r}\in\mathbb{R}^{\dimension\times r} \\ \text{s.t. } U_{r}^{T} U_{r} = I_{r}}} 
    \trace(U_{r}^{T} \matrixH U_{r})
    -2r\|\matrixH-\HMF \| .
 \end{equation}
\end{corollary}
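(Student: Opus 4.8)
The plan is to observe that Corollary~\ref{prop:ControlHatUcorollary} is an immediate special case of Proposition~\ref{prop:ControlHatUgeneral}. Indeed, the general proposition establishes the bound \eqref{eq:ControlHatUgeneral} for \emph{any} symmetric estimator $\widehat H$ of $\matrixH$, with $\widetilde U_r$ defined as a maximizer of $U_r\mapsto\trace(U_r^T\widehat H U_r)$ over the Stiefel manifold.

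The first and only substantive step is to verify that the MF estimator $\HMF$ from Definition~\ref{def:MF_definition} is symmetric. This is clear: $\HMF$ is a linear combination (with real coefficients $1/m_1$ and $1/m_2$) of terms of the form $\grad(X_i)\grad(X_i)^T$ and $\gradapprox(X_i)\gradapprox(X_i)^T$, each of which is a symmetric (rank-one) matrix, hence $\HMF^T=\HMF$. As noted in the Remark on indefiniteness, $\HMF$ need not be positive semi-definite, but Proposition~\ref{prop:ControlHatUgeneral} requires only symmetry, so this poses no difficulty.

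The second step is simply to apply Proposition~\ref{prop:ControlHatUgeneral} with $\widehat H=\HMF$. Then $\widetilde U_r$ in \eqref{eq:ASmaxtraceHMFgeneral} coincides with $\widehat U_r$ in \eqref{eq:ASmaxtraceHMF}, since the two are defined by the same optimization problem $\text{argmax}\,\trace(U_r^T\HMF U_r)$ subject to $U_r^TU_r=I_r$. Substituting into \eqref{eq:ControlHatUgeneral} yields exactly \eqref{eq:ControlHatUcorollary}.

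There is essentially no obstacle here: the work has already been done in Proposition~\ref{prop:ControlHatUgeneral}, whose proof (via the eigenvalue decomposition of $\matrixH-\widehat H$ and the trace bound \eqref{eq:tmp185713}) never uses definiteness of $\widehat H$. The only thing to check is the hypothesis of symmetry, which holds by inspection of the definition of $\HMF$. One could alternatively repeat the argument of Proposition~\ref{prop:ControlHatUgeneral} verbatim with $\Hhat$ replaced by $\HMF$, but invoking the general result is cleaner and is precisely why that proposition was stated in the generality it was.
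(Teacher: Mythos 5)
Your proof is correct and takes exactly the same route as the paper, which likewise obtains the corollary by applying Proposition~\ref{prop:ControlHatUgeneral} with $\Hhat=\HMF$ and $\widetilde{U}_{r}=\widehat{U}_{r}$. Your explicit check that $\HMF$ is symmetric (being a real linear combination of rank-one symmetric matrices) is a welcome detail that the paper leaves implicit.
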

\begin{proof}
 This follows from applying Proposition~\ref{prop:ControlHatUgeneral} to $\Hhat=\HMF$ and $\widetilde{U}_{r}=\widehat{U}_{r}$.
\end{proof}

We now establish the connection between  the result of Corollary~\ref{prop:ControlHatUcorollary} and the quality of the dimension reduction.
Assume that inequality \eqref{eq:AScontrol} holds true for any $U_r^TU_r=I_d$ (this is in particular the case if $X\sim\mathcal{N}(0,I_d)$).
Replacing $U_r$ by $\widehat U_r$ in \eqref{eq:AScontrol} and using Corollary~\ref{prop:ControlHatUcorollary}, we can write
\begin{align}\label{eq:functionalerrorbound}
 \expect [ ( \obj(X) - h(\widehat U_{r}^{T}X) )^{2} ] 
 &\leq  \trace(H) -  \max_{\substack{U_{r}\in\mathbb{R}^{\dimension\times r} \\ \text{s.t. } U_{r}^{T} U_{r} = I_{r}}}   \trace(U_{r}^{T} \matrixH U_{r}) + 2r \|H-\HMF\|
 \\ & = \big( \lambda_{r+1} + \hdots + \lambda_d \big) + 2r \|H-\HMF\| ,
\end{align}
where $h(\widehat U_{r}^{T}X) = \expect[f(X)|\widehat U_{r}^{T}X]$. Here $\lambda_i\geq0$ denotes the $i$-th largest eigenvalue of $H$.
This relation shows that a strong decay in the spectrum of $H$ is favorable to efficient dimension reduction.
Also, increasing the number $r$ of active variables has competitive effects on the two terms in the right-hand side: the first term $(\lambda_{r+1} + \hdots + \lambda_d)$ is reduced whereas the second term $2r \|H-\HMF\|$ increases linearly in $r$.
Given the importance of $\|\matrixH-\HMF\|$ in controlling the quality of the dimension reduction, we show in the next section how this error can be controlled at cheap cost using the proposed MF estimator.

\begin{remark}[Angle between subspaces]
 Another way of controlling the quality of the approximate active subspace (the span of the columns of $\widehat U_{r}$) is via the \emph{principal angle} between the exact and the approximate active subspaces \cite{holodnak2018probabilistic,constantine2014computing}. 
 This angle, denoted by $\angle(\widehat{\mathcal{S}},\mathcal{S})$, is defined by
 $$
  \sin( \angle(\widehat{\mathcal{S}},\mathcal{S}) )= \| \widehat U_r \widehat U_r^T -  \tilde{U}_r \tilde{U}_r^T \| ,
 $$
 where $\widehat{\mathcal{S}}=\text{range}(\widehat U_r)$ is the approximate subspace and $\mathcal{S}=\text{range}(\tilde{U}_r)$ is the exact active subspace, $\tilde{U}_r$ being a solution to \eqref{eq:ASmaxtrace}.
 This requires $\widehat{\mathcal{S}}$ and $\mathcal{S}$ to be uniquely defined, which might not be the case if there is a plateau in the spectra of $\matrixH$ and $\HMF$.
 For instance, if the $r$-th eigenvalue of $H$ equals the $(r+1)$-th, problem \eqref{eq:ASmaxtrace} admits infinitely many solutions and $\mathcal{S}$ is not uniquely defined.
 Note that in practice, $r$ is chosen such that the spectral gap is large, by inspection of the spectrum.
 To our knowledge, all analyses focusing on controlling the principal angle $\angle(\widehat{\mathcal{S}},\mathcal{S})$ rely on the spectral gap assumption $\lambda_{r}>\lambda_{r+1}$, where $\lambda_r$ is the $r$-th eigenvalue of $H$.
 
 In contrast, the goal-oriented approach consisting of minimizing the upper bound of the functional error does not require the spectral gap assumption.
 This results from \eqref{eq:ASmaxtrace} and Corollary \ref{prop:ControlHatUcorollary}.
 In particular, the uniqueness of the active subspace is not required, as any solution to \eqref{eq:ASmaxtrace} yields an equally good active subspace for the purpose of function approximation.
 Therefore, in this paper, we do not further consider the principal angle $\angle(\widehat{\mathcal{S}},\mathcal{S})$.
 Instead we focus on comparing $\trace(\widehat{U}_{r}^{T} \matrixH \widehat U_r)$ to $\trace(\tilde{U}_{r}^{T} \matrixH \tilde{U}_{r})$.
 As illustrated by Corollary \ref{prop:ControlHatUcorollary}, this is sufficient to control the error 
 $\Vert\matrixH - \HMF\Vert$
 between the AS matrix and its estimator.
\end{remark}

\section{A non-asymptotic analysis of the estimator}
\label{sec:theoretical_analysis}
In this section, we use results from non-asymptotic random matrix theory to  express the number of gradient evaluations sufficient to control the
error in an estimate of $\matrixH$, up to a user-defined tolerance. We present our main results in this Section and defer the proofs to Appendix~\ref{sec:MainProof} and Appendix~\ref{sec:SingleFidelityResult}.

In general, Monte Carlo estimation of a high-dimensional object such as a $\dimension\times\dimension$ matrix can require a large number of samples. If the matrix does not have some special structure, one can expect the sample complexity to scale with the large ambient dimension $\dimension$. This is costly if each sample is expensive.
However, the AS matrix $\matrixH$ enjoys some structure when the problem has low effective dimension.
In particular, when most of the variation of $\obj$ is concentrated in a low-dimensional subspace,
we expect the number of samples required to obtain a good approximation of $\matrixH$ to depend on the dimension of this subspace, rather than on the ambient dimension $\dimension$ of the input space.
One case of interest occurs when $\obj$ is a ridge function that \textit{only} depends on a small number of linear combinations of input variables. This leads to a rank-deficient matrix $\matrixH$.
In such a case, we expect the number of samples to depend on the rank of $\matrixH$.
Another important case occurs when a (possibly full-rank)  matrix $\matrixH$ has a quickly decaying spectrum.
In such a case, we expect that the number of samples should depend on a characteristic quantity of the spectrum (e.g., the sum of the eigenvalues).
To make a precise statement, we use the notion of \textit{intrinsic dimension} \cite{tropp_intro_MAL-048} (Def.~7.1.1), also called the \textit{effective rank} \cite{koltchinskii2016asymptotics} (Def.~1). The intrinsic dimension of $\matrixH$ is defined by
\begin{align*}
 \intdim = \frac{\trace(H)}{\|\matrixH\|}.
\end{align*}
The intrinsic dimension is a measure of the spectral decay of $\matrixH$. 
It is bounded by the rank of $\matrixH$, i.e., $1\leq \intdim\leq\text{rank}(\matrixH) \leq\dimension$.

Our main result, Proposition \ref{prop:MainResult} below, establishes how many evaluations of the gradient are sufficient to guarantee that the error $\Vert\matrixH-\HMF\Vert$ is below some user-defined tolerance.
In particular, the number of gradient evaluations from the low-fidelity and high-fidelity models is shown to be a function of the intrinsic dimension of $\matrixH$ and of two coefficients $\theta$ and $\beta$, 
characterizing the quality of the low-fidelity model and the maximum relative magnitude of the high-fidelity gradient, respectively.
\begin{proposition}\label{prop:MainResult}
 Assume there exist positive constants $\beta <\infty$ and $\theta <\infty$ such that the relations
 \begin{align}
  \Vert\grad(X)\Vert^{2} &\leq \beta^{2} \, \normE , \label{eq:Assumption BETA}\\
  \Vert \grad(X) - \gradapprox(X) \Vert^{2} &\leq \theta^{2} \, \normE , \label{eq:Assumption THETA}
 \end{align}
 hold almost surely.
 Let $\HMF$ be the MF estimator introduced in Definition \ref{def:MF_definition} and assume
 \begin{align}\label{eq:m2geqm1}
   m_2\geq m_1 \max\left\{\frac{(\theta+\beta)^2(1+\theta)^2}{\theta^{2}(2+\theta)^{2}} \,;\, \frac{(\theta+\beta)^{2}}{\theta(2\beta+\theta)}\right\}.
 \end{align}
 Then, for any $\varepsilon>0$, the condition
 \begin{align}\label{eq:m1geq MEAN}
    m_1 \geq \varepsilon^{-2} \intdim \, \theta \, \log(2d) \max \big\{ 4\intdim\theta(2+\theta)^2 \,;\, 2/3(2\beta+\theta) \big\} ,
 \end{align}
 is sufficient to ensure
 \begin{align}\label{eq:MainResult MEAN}
   \expect[\Vert\matrixH-\HMF\Vert]\leq (\varepsilon+\varepsilon^{2})\normH.  
 \end{align}
 Furthermore for any $0<\varepsilon\leq1$ and $0<\eta<1$, the condition
 \begin{align}\label{eq:m1geq PROBA}
   m_1\geq \varepsilon^{-2} \intdim \, \theta\, \log(2d / \eta) \, \big( 4\intdim\theta(2+\theta)^2+ \varepsilon 4/3(2\beta+\theta) \big),
 \end{align}
 is sufficient to ensure
 \begin{align}\label{eq:MainResult PROBA}
   \proba\big\{\Vert\matrixH-\HMF\Vert \leq \varepsilon\normH \big\} \geq 1-\eta.
 \end{align}
\end{proposition}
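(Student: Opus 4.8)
The plan is to regard $\HMF-\matrixH$ as a single sum of $m_1+m_2$ independent, symmetric, mean-zero random matrices and to apply the matrix Bernstein inequality in operator-norm form (see, e.g., \cite{tropp_intro_MAL-048}), first its expectation bound and then its tail bound. Writing $A(X)=\grad(X)\grad(X)^{T}-\gradapprox(X)\gradapprox(X)^{T}$ and $B(X)=\gradapprox(X)\gradapprox(X)^{T}$, so that $\matrixH=\expect[A(X)]+\expect[B(X)]$, Definition~\ref{def:MF_definition} gives
\begin{equation*}
 \HMF-\matrixH=\sum_{i=1}^{m_1}\tfrac{1}{m_1}\big(A(\randd_i)-\expect[A(X)]\big)+\sum_{i=m_1+1}^{m_1+m_2}\tfrac{1}{m_2}\big(B(\randd_i)-\expect[B(X)]\big).
\end{equation*}
Matrix Bernstein needs two quantities: a uniform almost sure bound $L$ on the operator norm of the summands and the matrix variance statistic $v=\big\Vert\tfrac{1}{m_1}\expect[(A-\expect A)^{2}]+\tfrac{1}{m_2}\expect[(B-\expect B)^{2}]\big\Vert$. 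The whole argument then runs on the identity $\normE=\trace(\matrixH)=\intdim\Vert\matrixH\Vert$, which converts the normalization in \eqref{eq:Assumption BETA}--\eqref{eq:Assumption THETA} into the intrinsic dimension $\intdim$ and the target scale $\Vert\matrixH\Vert$.

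For $L$, I would use the almost sure consequences of \eqref{eq:Assumption BETA}--\eqref{eq:Assumption THETA}: $\Vert\grad-\gradapprox\Vert\le\theta\sqrt{\trace\matrixH}$, $\Vert\gradapprox\Vert\le(\beta+\theta)\sqrt{\trace\matrixH}$, and $\Vert\grad+\gradapprox\Vert\le(2\beta+\theta)\sqrt{\trace\matrixH}$. Writing the rank-one difference symmetrically, $A=\tfrac12\big((\grad+\gradapprox)(\grad-\gradapprox)^{T}+(\grad-\gradapprox)(\grad+\gradapprox)^{T}\big)$, gives $\Vert A\Vert\le\Vert\grad+\gradapprox\Vert\,\Vert\grad-\gradapprox\Vert\le(2\beta+\theta)\theta\trace\matrixH$ and $\Vert B\Vert=\Vert\gradapprox\Vert^{2}\le(\beta+\theta)^{2}\trace\matrixH$, and by convexity $\Vert\expect[A]\Vert$, $\Vert\expect[B]\Vert$ satisfy the same bounds; hence the summands above are bounded a.s.\ by $2(2\beta+\theta)\theta\trace\matrixH/m_1$ and $2(\beta+\theta)^{2}\trace\matrixH/m_2$. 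For $v$, I would use that $\Vert\expect[M^{2}]\Vert=\max_{\Vert u\Vert=1}\expect[\Vert Mu\Vert^{2}]$ for symmetric $M$, that $\expect[(M-\expect M)^{2}]\preccurlyeq\expect[M^{2}]$, and the elementary estimate $\Vert\expect[\gradapprox\gradapprox^{T}]\Vert\le(1+\theta)^{2}\trace\matrixH$ (from $\langle\gradapprox,u\rangle=\langle\grad,u\rangle-\langle\grad-\gradapprox,u\rangle$, bounding the last inner product by $\theta\sqrt{\trace\matrixH}$ and averaging $\langle\grad,u\rangle^{2}$ to $u^{T}\matrixH u\le\trace\matrixH$). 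Expanding $A(X)u$ through the same symmetric splitting and---crucially---applying the pointwise bounds only to the \emph{gradient-difference} factors $\Vert\grad-\gradapprox\Vert$ and $\langle\grad-\gradapprox,u\rangle$, while leaving $\Vert\grad\Vert$ and $\langle\grad,u\rangle$ to be averaged exactly, then yields $\Vert\expect[A(X)^{2}]\Vert\le\theta^{2}(2+\theta)^{2}(\trace\matrixH)^{2}$ and $\Vert\expect[B(X)^{2}]\Vert\le(\beta+\theta)^{2}(1+\theta)^{2}(\trace\matrixH)^{2}$. With these bounds, assumption \eqref{eq:m2geqm1} is exactly the statement that the $B$-contributions to both $L$ and $v$ do not exceed the $A$-contributions, so that $L\le 2(2\beta+\theta)\theta\trace\matrixH/m_1$ and $v\le 2\theta^{2}(2+\theta)^{2}(\trace\matrixH)^{2}/m_1$.

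It then remains to substitute into the two forms of matrix Bernstein, using $\trace\matrixH=\intdim\Vert\matrixH\Vert$. The expectation bound $\expect\Vert\HMF-\matrixH\Vert\le\sqrt{2v\log(2\dimension)}+\tfrac13 L\log(2\dimension)$ makes the first term at most $\varepsilon\Vert\matrixH\Vert$ once $m_1\ge 4\varepsilon^{-2}\intdim^{2}\theta^{2}(2+\theta)^{2}\log(2\dimension)$ and the second at most $\varepsilon^{2}\Vert\matrixH\Vert$ once $m_1\ge\tfrac23\varepsilon^{-2}\intdim\theta(2\beta+\theta)\log(2\dimension)$; both are implied by \eqref{eq:m1geq MEAN}, which gives \eqref{eq:MainResult MEAN}. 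For the high-probability claim, the tail bound $\proba\{\Vert\HMF-\matrixH\Vert\ge t\}\le 2\dimension\exp\!\big(-\tfrac{t^{2}/2}{v+Lt/3}\big)$ with $t=\varepsilon\Vert\matrixH\Vert$ is at most $\eta$ precisely when $\varepsilon^{2}\Vert\matrixH\Vert^{2}/2\ge\log(2\dimension/\eta)\,(v+L\varepsilon\Vert\matrixH\Vert/3)$, and inserting the bounds on $v$ and $L$ turns this into \eqref{eq:m1geq PROBA}, giving \eqref{eq:MainResult PROBA}; the restriction $0<\varepsilon\le1$ enters only to keep this last rearrangement clean.

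The real work is the variance estimate of the second paragraph: the $\beta$-free constant $(2+\theta)^{2}$ in $\Vert\expect[A^{2}]\Vert$ (and the $(1+\theta)^{2}$ factor in $\Vert\expect[\gradapprox\gradapprox^{T}]\Vert$) hinges on the symmetric decomposition of $\grad\grad^{T}-\gradapprox\gradapprox^{T}$ and on applying the pointwise bounds only to the difference factors, so that the surviving squared gradients and inner products are averaged exactly to $\trace\matrixH$ and $u^{T}\matrixH u$ rather than worst-cased; a cruder argument leaves an extra power of $\beta$ or of $\intdim$ and does not reproduce the stated constants. Everything else---the two almost sure norm bounds, the reduction of centered second moments to uncentered ones, and the algebra turning the Bernstein conclusions into the displayed sample sizes---is routine once this is in place.
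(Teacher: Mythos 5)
Your proposal is correct and follows essentially the same route as the paper's proof: the same splitting of $\matrixH-\HMF$ into $m_1+m_2$ independent centered summands, the same bounds $L\le 2\theta(2\beta+\theta)\normE/m_1$ and $v\le 2\theta^2(2+\theta)^2\normE^2/m_1$ obtained by applying the pointwise assumptions only to the gradient-difference factors (with \eqref{eq:m2geqm1} absorbing the $m_2$-terms), and the same application of the non-intrinsic matrix Bernstein inequality together with $\normE=\intdim\normH$. The only difference is cosmetic: you bound $\Vert A\Vert$ and $\Vert Au\Vert$ via the symmetric splitting $A=\tfrac12\big((\grad+\gradapprox)(\grad-\gradapprox)^{T}+(\grad-\gradapprox)(\grad+\gradapprox)^{T}\big)$, whereas the paper uses the asymmetric decomposition $A=\grad(\grad-\gradapprox)^{T}-(\gradapprox-\grad)\gradapprox^{T}$ with the triangle inequality; both yield identical constants.
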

\begin{proof}
  See Appendix \ref{sec:MainProof}.
\end{proof}

Similarly, we can derive the number of high-fidelity gradient evaluations $m_{1}$ sufficient to control the SF estimator error in expectation and with high probability. 
This is the purpose of the following proposition (see also \cite{lam2017thesis}).
Note that a high-probability bound similar to equations \eqref{eq:m1geq SF PROBA} and \eqref{eq:SingleFidelityResult PROBA} is established by Corollary 2.2 from \cite{holodnak2018probabilistic}.
\begin{proposition}\label{prop:SingleFidelityResult}
 Assume there exists $\beta <\infty$ such that
 \begin{align}\label{eq:Assumption BETA SF}
  \Vert\grad(X)\Vert^{2} &\leq \beta^{2} \, \normE , 
 \end{align}
 holds almost surely. Then for any $0<\varepsilon\leq1$ the condition
 \begin{align}\label{eq:m1geq SF}
  m_1 \geq C \varepsilon^{-2}\intdim \log(1+2\intdim) (1+\beta^2),
 \end{align}
 is sufficient to ensure
 \begin{align}\label{eq:SingleFidelityResult MEAN}
  \expect[\Vert \matrixH-\HSF \Vert] \leq (\varepsilon+\varepsilon^2)\|\matrixH\| .
 \end{align}
 Here $C$ is an absolute (numerical) constant.
 Furthermore for any $0<\varepsilon\leq1$ and any $0<\eta\leq1$, the condition
 \begin{align}\label{eq:m1geq SF PROBA}
  m_1 \geq 2 \varepsilon^{-2}\intdim \log(8\intdim/\eta) ( \beta^2 + \varepsilon(1+\beta^2)/3) ,
 \end{align}
 is sufficient to ensure
 \begin{align}\label{eq:SingleFidelityResult PROBA}
  \proba\{\Vert \matrixH-\HSF \Vert \leq \varepsilon\|\matrixH\| \} \geq 1-\eta.
 \end{align}
\end{proposition}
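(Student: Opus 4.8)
My plan is to apply the intrinsic-dimension matrix Bernstein inequality to the centered sum $\HSF-\matrixH=\sum_{i=1}^{m_1}S_i$, where $S_i:=\tfrac{1}{m_1}\big(\grad(\randd_i)\grad(\randd_i)^T-\matrixH\big)$ are independent, symmetric, mean-zero random matrices. Two preliminary remarks: taking the expectation of \eqref{eq:Assumption BETA SF} forces $\beta\geq 1$, and $\normE=\expect[\trace(\grad(X)\grad(X)^T)]=\trace(\matrixH)=\intdim\normH$. First I would bound the summands uniformly: since $\grad(\randd_i)\grad(\randd_i)^T$ and $\matrixH$ are positive semidefinite, $\|\grad(\randd_i)\grad(\randd_i)^T-\matrixH\|\leq\max\{\|\grad(\randd_i)\|^2,\normH\}\leq\beta^2\trace(\matrixH)$ almost surely by \eqref{eq:Assumption BETA SF} (using $\beta\geq1$ and $\trace(\matrixH)\geq\normH$), so $\|S_i\|\leq L:=\beta^2\trace(\matrixH)/m_1$. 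Next I would compute the matrix variance: expanding the square and using $\matrixH^2\mgeq0$ together with the pointwise bound $\|\grad(X)\|^2\leq\beta^2\trace(\matrixH)$ gives $\expect\big[(\grad(X)\grad(X)^T-\matrixH)^2\big]=\expect\big[\|\grad(X)\|^2\grad(X)\grad(X)^T\big]-\matrixH^2\mleq\beta^2\trace(\matrixH)\matrixH$, hence $\sum_{i=1}^{m_1}\expect[S_i^2]\mleq M:=\tfrac{\beta^2\trace(\matrixH)}{m_1}\matrixH$. The key point is that the semidefinite variance proxy $M$ is a positive multiple of $\matrixH$, so $\|M\|=\beta^2\intdim\normH^2/m_1$ while its intrinsic dimension is exactly $\trace(M)/\|M\|=\trace(\matrixH)/\normH=\intdim$: the ambient dimension $\dimension$ drops out, and only $\intdim$ and $\beta$ survive.

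Then I would feed $L$, $\|M\|$, and $\text{intdim}(M)=\intdim$ into the intrinsic-dimension matrix Bernstein inequality (Theorem~7.3.1 of \cite{tropp_intro_MAL-048} and its expectation form). For the expectation bound this gives $\expect[\|\HSF-\matrixH\|]\leq\sqrt{2\|M\|\log(1+2\intdim)}+\tfrac13L\log(1+2\intdim)$; substituting $\|M\|$ and $L$ and requiring the two terms to be at most $\varepsilon\normH$ and $\varepsilon^2\normH$ respectively reduces, after elementary algebra and $\beta^2\leq1+\beta^2$, to the sufficient condition \eqref{eq:m1geq SF} for a suitable absolute constant $C$, which yields \eqref{eq:SingleFidelityResult MEAN}. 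For the high-probability bound I would use the tail form: applying the inequality to $\pm(\HSF-\matrixH)$ with a union bound gives $\proba\{\|\HSF-\matrixH\|\geq t\}\leq 8\intdim\exp\!\big(\tfrac{-t^2/2}{\|M\|+Lt/3}\big)$ for $t\geq\sqrt{\|M\|}+L/3$; choosing $t=\varepsilon\normH$ turns the denominator into $\beta^2\intdim\normH^2(1+\varepsilon/3)/m_1$, and requiring the exponent to dominate $\log(8\intdim/\eta)$ gives $m_1\geq2\varepsilon^{-2}\intdim\log(8\intdim/\eta)(\beta^2+\varepsilon\beta^2/3)$, which is implied by \eqref{eq:m1geq SF PROBA} (again using $\beta^2\leq1+\beta^2$). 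A short check shows that \eqref{eq:m1geq SF PROBA} also forces $\varepsilon\normH\geq\sqrt{\|M\|}+L/3$, so the tail bound applies and \eqref{eq:SingleFidelityResult PROBA} follows.

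The hard part is not the algebra but invoking the intrinsic-dimension Bernstein inequality correctly with the semidefinite proxy $M$ in place of the true variance $\sum_i\expect[S_i^2]$, so that the dimension-free quantity $\intdim=\trace(M)/\|M\|$ — rather than the a priori uncontrolled intrinsic dimension of the exact variance matrix — is what enters the sample-complexity bounds; once that is set up, the remaining steps are the one-line variance computation above and routine bookkeeping to match the stated constants.
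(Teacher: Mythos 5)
Your proposal is correct and follows essentially the same route as the paper's proof: the same decomposition of $\matrixH-\HSF$ into independent mean-zero summands, the same semidefinite variance proxy $\tfrac{\beta^2\trace(\matrixH)}{m_1}\matrixH$ whose intrinsic dimension collapses to $\intdim$, and the same application of the intrinsic-dimension matrix Bernstein inequality for both the expectation and tail bounds. The only (harmless) differences are that your summand bound $L=\beta^2\trace(\matrixH)/m_1$ is slightly tighter than the paper's $(1+\beta^2)\normE/m_1$, and that you explicitly verify the applicability condition $t\geq\sqrt{v}+L/3$, which the paper leaves implicit.
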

\begin{proof}
  See Appendix \ref{sec:SingleFidelityResult}.
\end{proof}

The previous propositions show that when $\theta^{2}$ is small (i.e., when $\gradapprox$ is a good approximation of $\grad$), the number of samples $m_1$ of the high-fidelity function can be significantly reduced compared to the single-fidelity approach.
The number of samples $m_2$ has to be adjusted according to 
\eqref{eq:m2geqm1}.
The guarantees provided for the MF estimator are different than those for the SF estimator.
For the MF estimator, the bound on $m_1$ is a function of the intrinsic dimension but is also weakly (i.e., logarithmically) dependent on the ambient dimension $\dimension$.

These results are especially interesting when $\beta^{2}$ has no dependency (or weak dependency) on the ambient dimension $\dimension$.
In such a case, the number of evaluations sufficient to obtain a satisfactory relative error depends only on the intrinsic dimension $\intdim$ and the parameter $\beta^{2}$, and does not depend (or only weakly depends) on the ambient dimension $\dimension$.
Recall that $\beta^{2}$ quantifies the variation of the square norm of the gradient, $\Vert \grad\Vert^{2}$, relative to its mean $\normE$.
In Section~\ref{sec:numerical_results}, we provide an example of gradient function $\grad$ for which $\beta^{2}$ is independent of $\dimension$.

The proofs of Propositions \ref{prop:MainResult} and \ref{prop:SingleFidelityResult} use a similar strategy.
The key ingredient is the use of a concentration inequality to bound the error between the AS matrix and its estimator.
These inequalities are applicable to matrices expressed as the sum of independent matrices (i.e., independent summands), a condition met by the MF and SF estimators of $\matrixH$.
The error bounds depend on two characteristics of the matrix of interest: the variance of the estimator and an upper bound on the norm of the summands.
Those two characteristic quantities are functions of the number of samples used to construct the estimator.
Once established, those bounds are used to express sufficient conditions on the number of samples to guarantee a user-defined tolerance for the error, both in expectation and with high probability.
The full proofs of Propositions \ref{prop:MainResult} and \ref{prop:SingleFidelityResult} are given in Appendix~\ref{sec:MainProof} and Appendix~\ref{sec:SingleFidelityResult}.

We conclude this section by summarizing the connection between our main results and a quantity of interest in dimension reduction: the functional error.
As shown in \eqref{eq:AScontrol}, for any matrix $U_{r}\in\mathbb{R}^{\dimension \times r}$ with $r\leq\dimension$ orthonormal columns, the functional error $\expect [ ( \obj(X) - h(U_{r}^{T}X) )^{2}]$ is upper bounded by 
$\trace(H) - \trace(U_{r}^{T} \matrixH U_{r})$. Thus, the quality of the dimension reduction can be controlled by finding the maximizer $U_{r}^{*}$ of $U_{r}\mapsto \trace(U_{r}^{T} \matrixH U_{r})$, as $U_{r}^{*}$ yields the tightest upper bound on the functional error.
However, $U_{r}^{*}$ cannot be computed because $\matrixH$ is unknown.
Instead, we compute an approximator $\HMF$ of $\matrixH$ and its associated $\widehat{U}_{r}$ (see \eqref{eq:ASmaxtraceHMF}).
Corollary~\ref{prop:ControlHatUcorollary} shows that using $\widehat{U}_{r}$ instead of $U_{r}^{*}$ yields an upper bound close to the tightest one when $\Vert\matrixH-\HMF\Vert$ is small.
As a result, the functional error $\expect [ ( \obj(X) - h(\widehat{U}_{r}^{T}X) )^{2}]$ incurred by the ridge approximation built with $\widehat{U}_{r}$ is at most $2r\Vert\matrixH-\HMF\Vert$ larger than the tightest bound defined by the unknown $U_{r}^{*}$ (see \eqref{eq:functionalerrorbound}).
Finally, Proposition~\ref{prop:MainResult} shows how $\Vert\matrixH-\HMF\Vert$ can be controlled by increasing the number of gradient evaluations.
Therefore, those results establish a direct link between the number of gradient evaluations and the upper bound on the functional error of the ridge function built with $\widehat{U}_{r}$.

\section{Numerical results}
\label{sec:numerical_results}
In this section, we conduct numerical experiments illustrating the performance of the proposed MF estimator. 
We first demonstrate the algorithm on synthetic examples for which we can compute errors; we conduct a parametric study and compare the SF and MF estimator performances. 
Second, we consider a high-dimensional engineering case: performing dimension reduction on a linear elasticity problem involving parameterized material properties of a wrench. 
Finally, we consider an expensive engineering problem: finding the active subspaces associated with the shape optimization of the ONERA M6 wing in a turbulent flow.

\subsection{Analytical problem} % (fold)
\label{sub:analytical_problem}

In this section, we consider an example for which all characteristic quantities 
($\intdim$,~ $\normH$,~ $\normE$,~ $\beta^{2}$) are known with closed-form expressions.
\footnote{Code for the analytical problems available at \url{https://github.mit.edu/rlam/MultifidelityDimensionReduction}.} 

\subsubsection{Problem description}
\label{sub:description_analytic}

We consider the input space $\inputspace = [-1,1]^{\dimension}$ and define $\rho$ to be the uniform distribution over $\inputspace$. 
We introduce the function $\obj:\inputspace\rightarrow \mathbb{R}$ such that for all $\design\in\inputspace$
$$
 f(\design) = \frac{\sqrt{3}}{2} \sum_{i=1}^{\dimension} a_i \, x_i^2 
 \quad\text{ and }\quad
 \grad(\design) = \sqrt{3}
 \begin{pmatrix}
  a_1 x_1 \\ \vdots \\ a_{\dimension} x_{\dimension}
 \end{pmatrix},
$$
where $\boldsymbol{a}=(a_1,\hdots,a_{\dimension})^{T}\in\mathbb{R}^{\dimension}$ is a user-defined vector with $|a_{1}|\geq |a_{2}| \geq \hdots \geq 0$.
With $X\sim\rho$, we have
$$
 H = \expect[\grad(X)\grad(X)^T]
 = \begin{pmatrix}
    a_{1}^{2} &&0\\
    &\ddots&\\
    0 & & a_{\dimension}^{2}
  \end{pmatrix},
$$
so that $\|H\|=a_1^2$, $\trace(H)= \Vert\boldsymbol{a}\Vert^2$ and 
$$
 \intdim=\frac{\trace(\matrixH)}{\normH} = \frac{\Vert\boldsymbol{a}\Vert^2}{a_1^2}.
$$
We define $\beta$ as the smallest parameter that satisfies $\|\grad(X)\|^2\leq \beta^2 \, \expect[\|\grad(X)\|^2]$.
Given that $\expect[\|\grad(X)\|^2] = \trace(H)=\Vert\boldsymbol{a}\Vert^2$, we have
$$
 \beta^2 
 = \sup_{\design\in\inputspace} \frac{\|\grad(\design)\|^2}{\expect[\|\grad(X)\|^2]}
 = \sup_{\design\in\inputspace} \frac{3 \sum_{i=1}^{\dimension} (a_i x_i)^2}{\sum_{i=1}^{\dimension} a_i^2}
 = 3,
$$
where the supremum is attained by maximizing each term in the numerator.
This corresponds to $x_i^2=1$ for all $1\leq i \leq \dimension$.
Notice that $\beta=\sqrt{3}$ is independent of the ambient dimension $\dimension$ and the user-defined vector $\boldsymbol{a}$.

We define the low-fidelity function $g:\inputspace\rightarrow\mathbb{R}$ such that, for all $\design\in\inputspace$, $g(\design) = \obj(\design) - b T \Vert\boldsymbol{a}\Vert  \cos(x_{\dimension} /T)$, where $b\geq0$ and $T>0$ are two user-defined parameters. 
In other words, $g$ is a perturbation of $\obj$ such that $g-f$ depends only on the last component. We have
$$
 \gradapprox(\design) = \grad(\design) +
 \begin{pmatrix}
  0\\\vdots\\0\\ b \Vert\boldsymbol{a}\Vert \sin(x_{\dimension} /T)
 \end{pmatrix},
$$
for all $\design\in\inputspace$.
We let $\theta$ be the smallest parameter such that $\| \grad(\design) - \gradapprox(\design) \|^2 \leq \theta^2 \normE$ for all $\design\in\inputspace$.
We obtain
\begin{align*}
 \theta^2 
 &= \sup_{\design\in\inputspace} \frac{\| \grad(\design) - \gradapprox(\design) \|^2}{\normE} 
 = \sup_{\design\in\inputspace} \frac{(b \Vert\boldsymbol{a}\Vert \sin(x_{\dimension} /T))^2}{\Vert\boldsymbol{a}\Vert^2} 
 = \begin{cases}
  b^2\sin(1/T)^2 &\text{if } T \geq \frac{2}{\pi}\\ 
  b^2 &\text{otherwise.}
 \end{cases}
\end{align*}
For the numerical experiments, we set $b=\sqrt{0.05}$ and $T=0.1$, leading to a parameter $\theta=\sqrt{0.05}$.
The number of samples $m_{2}$ is set using the criteria of \eqref{eq:m2geqm1}, leading to
$$
 m_2 = 63 m_1 
 ~\geq~
 m_1 \, \max\left\{\frac{(\theta+\beta)^2(1+\theta)^2}{\theta^{2}(2+\theta)^{2}} \,;\, \frac{(\theta+\beta)^{2}}{\theta(2\beta+\theta)}\right\} .
$$
In the two following subsections, we consider the case where $H$ is rank deficient, and the case where $H$ is full rank but has a small intrinsic dimension.
From now on, the ambient dimension is set to $\dimension=100$.

\subsubsection{Rank-deficient matrices} % (fold)
\label{sub:rank_deficient_matrices}

In this section, we consider the parameter $\boldsymbol{a}\in\mathbb{R}^{\dimension}$ defined by
$$
 \boldsymbol{a} = ( \underbrace{1,\hdots,1}_{k},\underbrace{0,\hdots,0}_{\dimension-k} ),
$$
for some $k\in\{1,3,10,30,100\}$. 
With this choice, the function $\obj$ only depends on the $k$ first variables of the input $\design$.
This yields
$$
 H = \begin{pmatrix} I_k & 0 \\ 0 & 0 \end{pmatrix}
 \quad\text{and}\quad
 \intdim = k \in\{1,3,10,30,100\},
$$
where $I_k$ is the identity matrix of size $k$.
We study the dependence of the relative error $\|\widehat{\matrixH}-\matrixH\|/\normH$ on the intrinsic dimension $\intdim$ and the number of samples $m_{1}$.

Figure~\ref{fig:rank_def_sample} shows the average relative error of the SF estimator (left panel) and the MF estimator (right panel) as a function of the number of samples $m_{1}$ for $\intdim\in\{1,10,100\}$.
The numerical results are in accordance with the theoretical bounds: the errors are dominated by the theoretical bounds and the slopes are similar.
Comparing the left and right panels, we conclude that the MF estimator outperforms the SF estimator for a given number of high-fidelity evaluations.
For example, in the case $\intdim=1$, with $m_{1}= 10$ high-fidelity samples, the MF estimator achieves a relative error of 0.07 while the SF relative error is 0.23---i.e., a relative error 3.47 times lower.
Similarly, Figure~\ref{fig:rank_def_intdim} shows the average relative error as a function of the intrinsic dimension of $\matrixH$ for $m_{1}\in\{10,100,1000\}$.
We notice that the difference between the theoretical bound and the actual estimator error is larger for the MF estimator than for the SF estimator.
\begin{figure}[h]
  \centering
  \begin{subfigure}[t]{0.48\textwidth}
    \includegraphics[width = 1.0\textwidth]{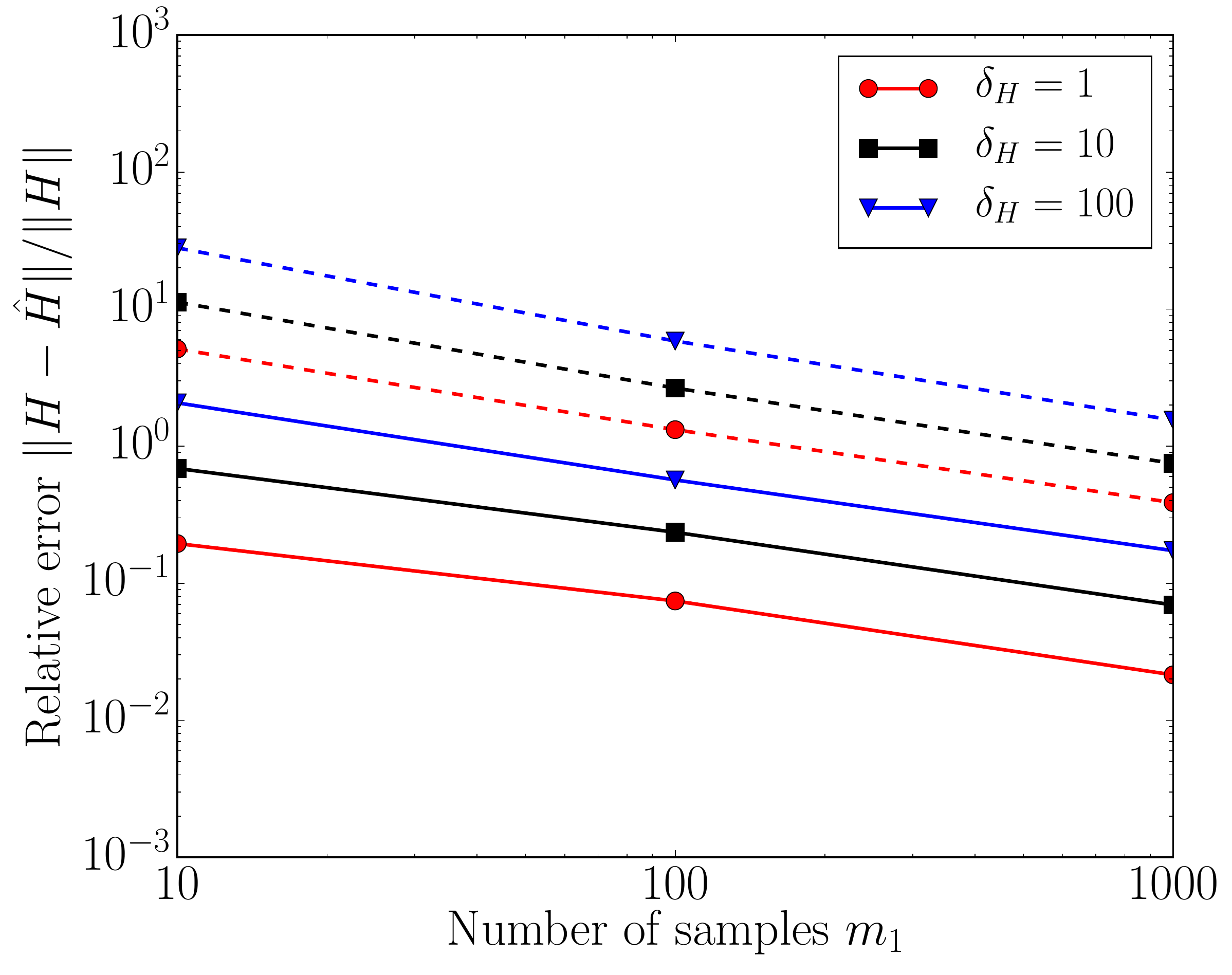}
  \end{subfigure}
  \begin{subfigure}[t]{0.48\textwidth}
    \includegraphics[width = 1.0\textwidth]{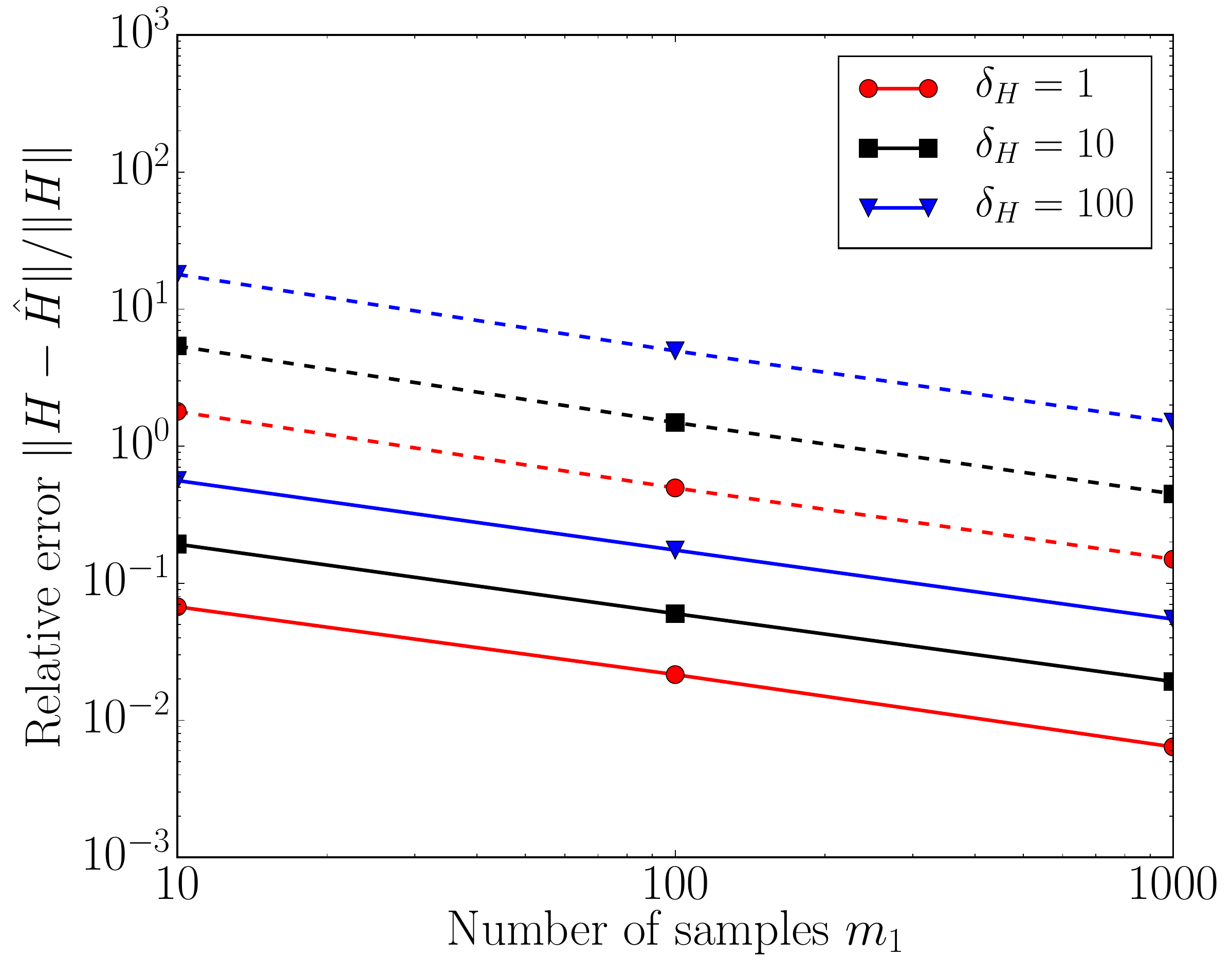}
    \end{subfigure}
  \caption{Rank-deficient example. Average relative error (solid line) as a function of the number of samples $m_{1}$ for the SF estimator (left panel) and the MF estimator (right panel). Average is computed over 100 trials. Dashed lines represent theoretical bounds.}\label{fig:rank_def_sample}
\end{figure}
\begin{figure}[h]
  \centering
  \begin{subfigure}[t]{0.48\textwidth}
    \includegraphics[width = 1.0\textwidth]{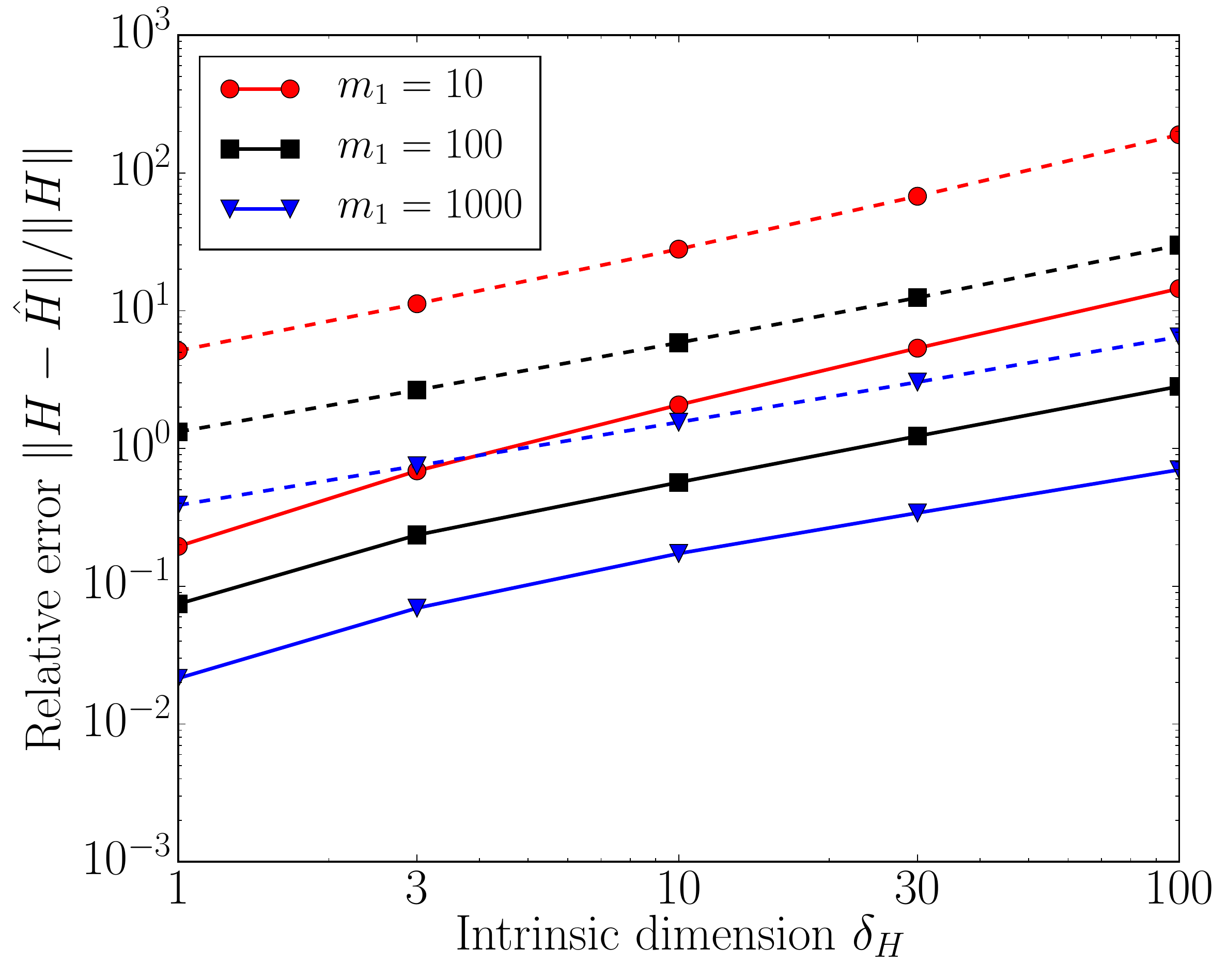}
  \end{subfigure}
  \begin{subfigure}[t]{0.48\textwidth}
    \includegraphics[width = 1.0\textwidth]{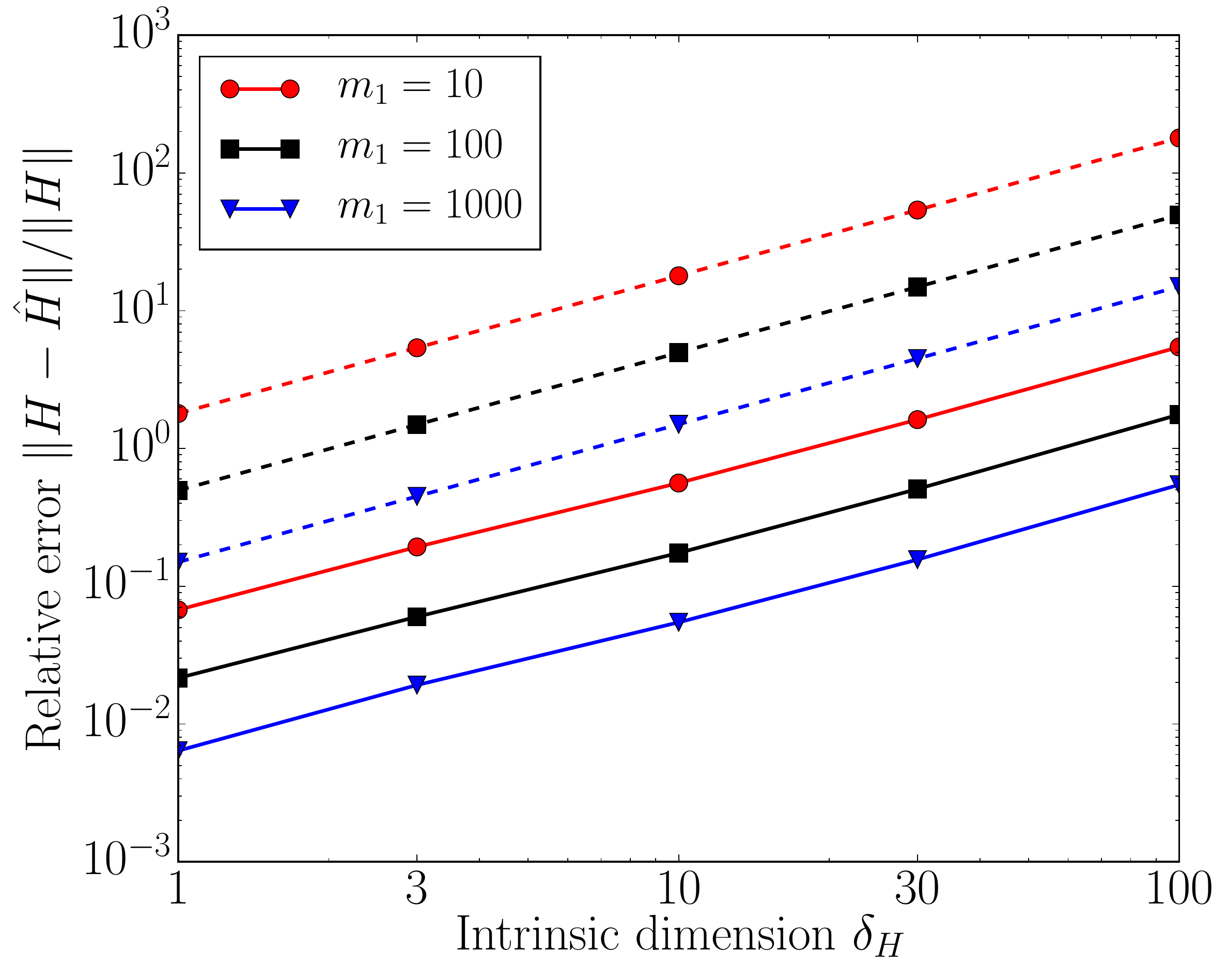}
    \end{subfigure}
  \caption{Rank-deficient example. Average relative error (solid line) as a function of the intrinsic dimension $\intdim$ for the SF estimator (left panel) and the MF estimator (right panel). Average is computed over 100 trials. Dashed lines represent theoretical bounds.}\label{fig:rank_def_intdim}
\end{figure}

\subsubsection{Full-rank matrices} % (fold)
\label{sub:full_rank_matrices}
In this section,
 we define the parameter $\boldsymbol{a}\in\mathbb{R}^{\dimension}$ to be
$
 a_i = \exp( - C i ),
$
for all $1\leq i \leq \dimension$, where the value of $C\geq0$ is set to match the intrinsic dimensions 
$
 \intdim\in\{2,5,10,50,100\}.
$
The function $\obj$ now depends on every component of the input $\design$ and the matrix $\matrixH$ is full rank.
Again, we study the dependence of the relative error $\|\widehat{\matrixH}-\matrixH\|/\normH$ on the intrinsic dimension $\intdim$ and the number of samples $m_{1}$.

Figure~\ref{fig:full_rank_sample} shows the average relative error of the SF estimator (left panel) and the MF estimator (right panel) as a function of the number of samples $m_{1}$ for $\intdim\in\{2,10,100\}$.
The numerical results agree with the theoretical bounds: the errors are dominated by the theoretical bounds and the slopes are similar.
Comparing the left and right panels, we conclude that the MF estimator outperforms the SF estimator for a given number of high-fidelity evaluations.
For example, in the case $\intdim=1$, with $m_{1}= 10$ high-fidelity samples, the MF estimator achieves a relative error of 0.09 while the SF relative error is 0.37, leading to a relative error 3.86 times lower.
Similarly, Figure~\ref{fig:full_rank_intdim} shows the average relative error as a function of the intrinsic dimension of $\matrixH$ for $m_{1}\in\{10,100,1000\}$.
Again, we observe that the empirical results satisfy the theoretical bounds.
\begin{figure}[h]
  \centering
  \begin{subfigure}[t]{0.48\textwidth}
      \includegraphics[width = 1.0\textwidth]{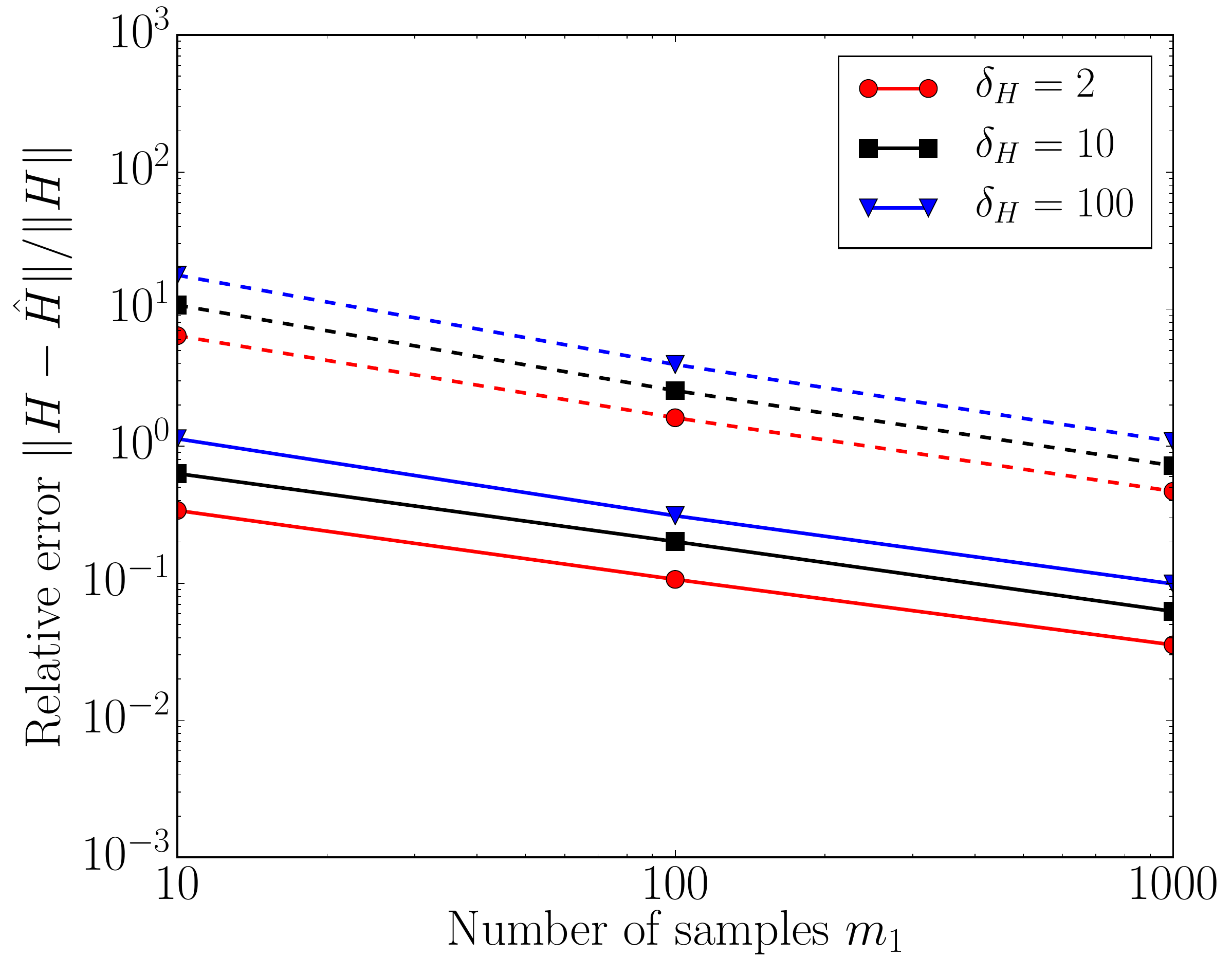}
  \end{subfigure}
  \begin{subfigure}[t]{0.48\textwidth}
    \includegraphics[width = 1.0\textwidth]{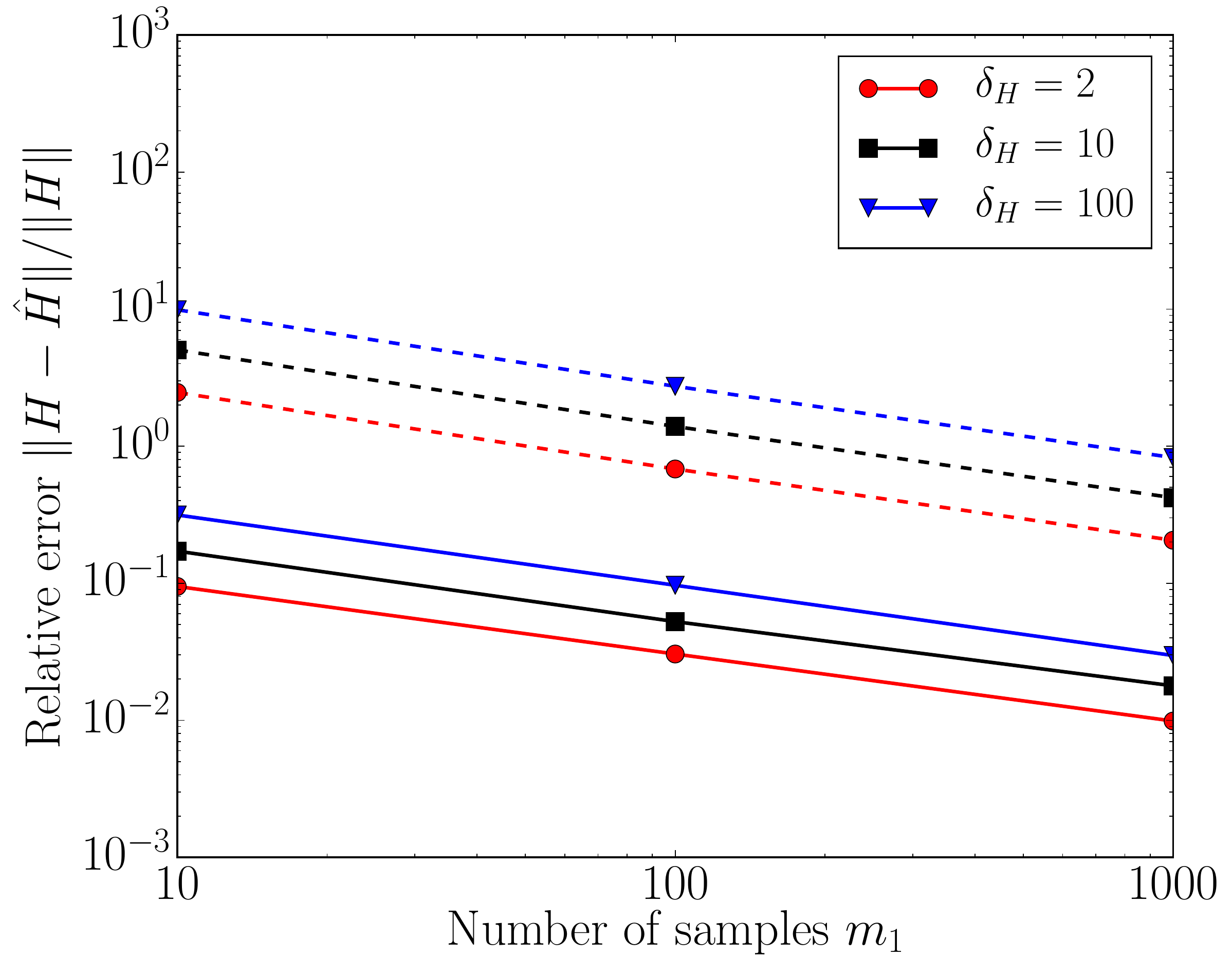}
    \end{subfigure}
  \caption{Full-rank example. Average relative error (solid line) as a function of the number of samples $m_{1}$ for the SF estimator (left panel) and the MF estimator (right panel). Average computed over 100 trials. Dashed lines represent theoretical bounds.}\label{fig:full_rank_sample}
\end{figure}

\begin{figure}[h]
  \centering
  \begin{subfigure}[t]{0.48\textwidth}
    \includegraphics[width = 1.0\textwidth]{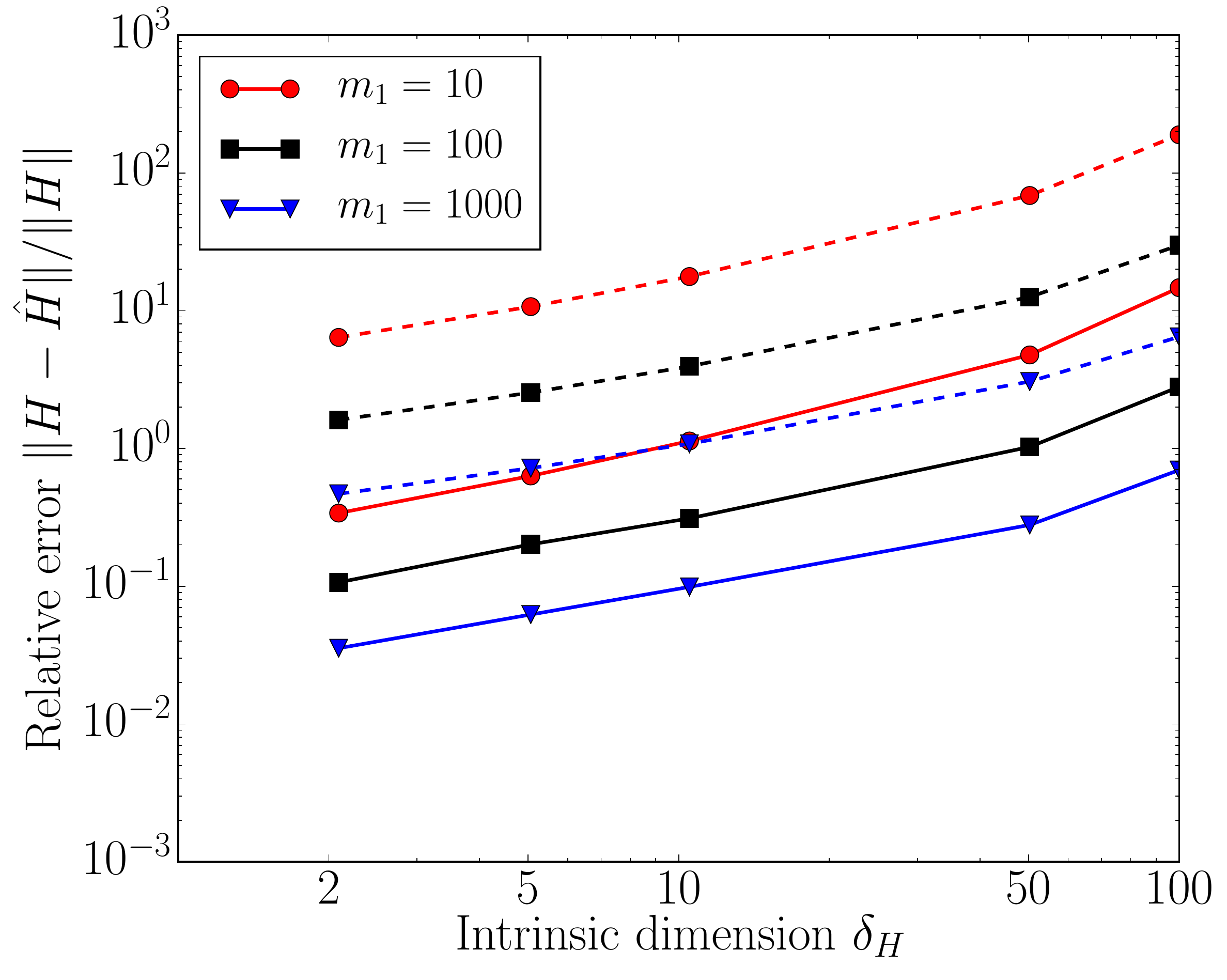}
  \end{subfigure}
  \begin{subfigure}[t]{0.48\textwidth}
    \includegraphics[width = 1.0\textwidth]{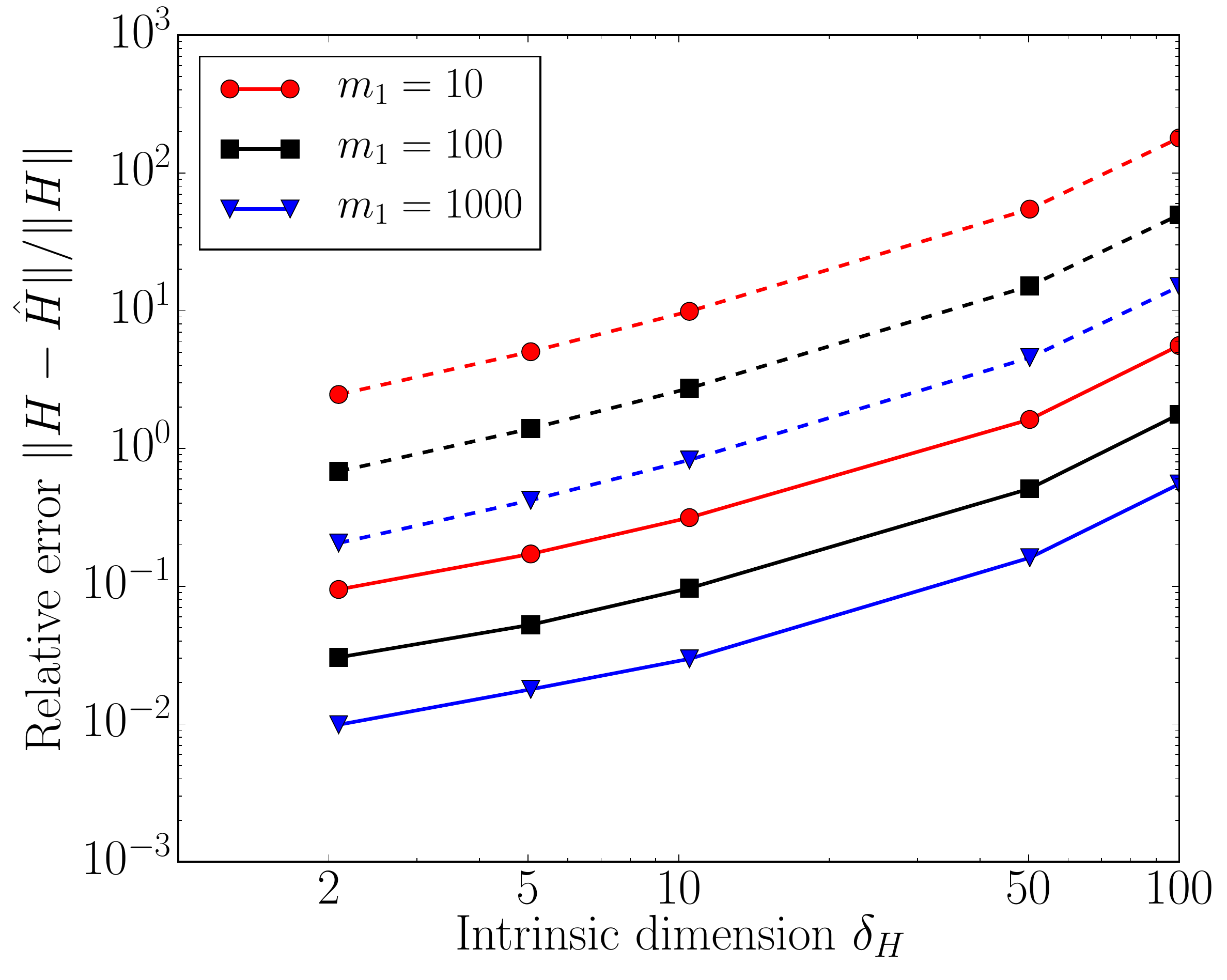}
    \end{subfigure}
  \caption{Full-rank example. Average relative error (solid line) as a function of the intrinsic dimension $\intdim$ for the SF estimator (left panel) and the MF estimator (right panel). Average computed over 100 trials. Dashed lines represent theoretical bounds.}\label{fig:full_rank_intdim}
\end{figure}

% subsection full_rank_matrices (end)
% subsection analytical_problem (end)

\subsection{Linear elasticity analysis of a wrench} % (fold)
\label{sub:wrench_material_composition}
We now demonstrate the proposed MF estimator on a high-dimensional engineering example.

\subsubsection{Problem description}

We consider a two-dimensional linear elasticity problem which consists of finding a smooth displacement field $u:\Omega\rightarrow\mathbb{R}^2$ that satisfies
$$
 \text{div} ( K:\varepsilon( u ) ) = 0
 \quad\text{ on } \Omega \subset\mathbb{R}^{2} ,
$$ 
where $\varepsilon(u) = \frac{1}{2}(\nabla u + \nabla u^T)$ is the strain field and $K$ is the Hooke tensor. 
The boundary conditions are depicted in Figure~\ref{fig:Wrench_GEOMETRY_1}.
Using the plane stress assumption, the Hooke tensor $K$ satisfies
$$
 K:\varepsilon( u ) = 
 \frac{E}{1+\nu} \varepsilon( u ) + \frac{\nu E }{1-\nu^2} \trace(\varepsilon( u )) I_2,
$$
where $\nu=0.3$ is the Poisson's ratio and $E\geq0$ the Young's modulus.
We model $E$ as a random field such that $\log(E) \sim \mathcal{N}(0,C)$ is a zero-mean Gaussian field over $\Omega$ with covariance function $C:\Omega\times\Omega\rightarrow\mathbb{R}$ such that $C(s,t)= \exp(-\|s-t\|_2^2/l_0)$ with $l_0=1$.
We consider the output defined by the vertical displacement of a point of interest (PoI) represented by the green point in Figure~\ref{fig:Wrench_GEOMETRY_1}.
We denote by $u_{2}(\mathrm{PoI})$ this scalar output, where the subscript `$2$' refers to the vertical component of $u$.

We denote by $u^{h}$ the finite element solution defined as the Galerkin projection of $u$ on a finite element space comprising continuous piecewise linear functions over the mesh depicted in Figure~\ref{fig:Wrench_GEOMETRY_2}. 
To compute $u^{h}$, we approximate the Young's modulus $\log(E)$ by a piecewise constant field $\log(E^h)$ that has the same statistics as $\log(E)$ at the center of the elements.
Denoting by 
$\dimension=2197$ 
the number of elements in the mesh, we define $f:\mathbb{R}^{\dimension} \rightarrow \mathbb{R}$ as the map from the log--Young's modulus to the quantity of interest
$$ 
 f: X=\log(E^{h})  \mapsto u_{2}^{h}(\mathrm{PoI}).
$$

\begin{figure}[h]
  \centering 
  \begin{subfigure}[t]{0.48\textwidth}
  \centering 
    \includegraphics[width = \textwidth]{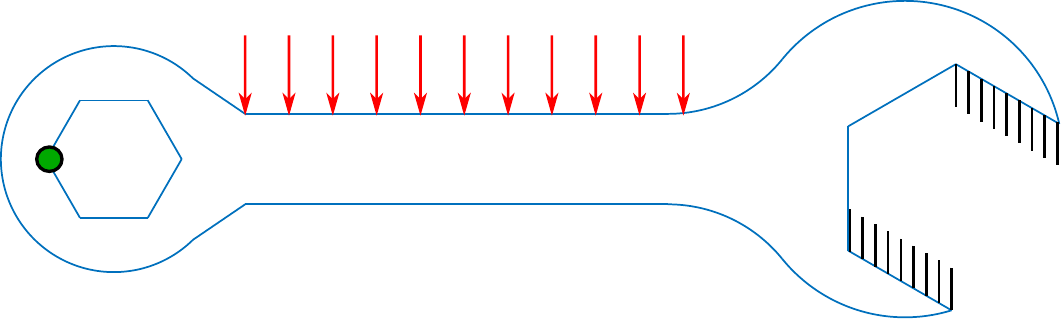} 
    \caption{Geometry and boundary conditions.}\label{fig:Wrench_GEOMETRY_1}
  \end{subfigure}
  \begin{subfigure}[t]{0.48\textwidth}
  \centering
    \includegraphics[width = \textwidth]{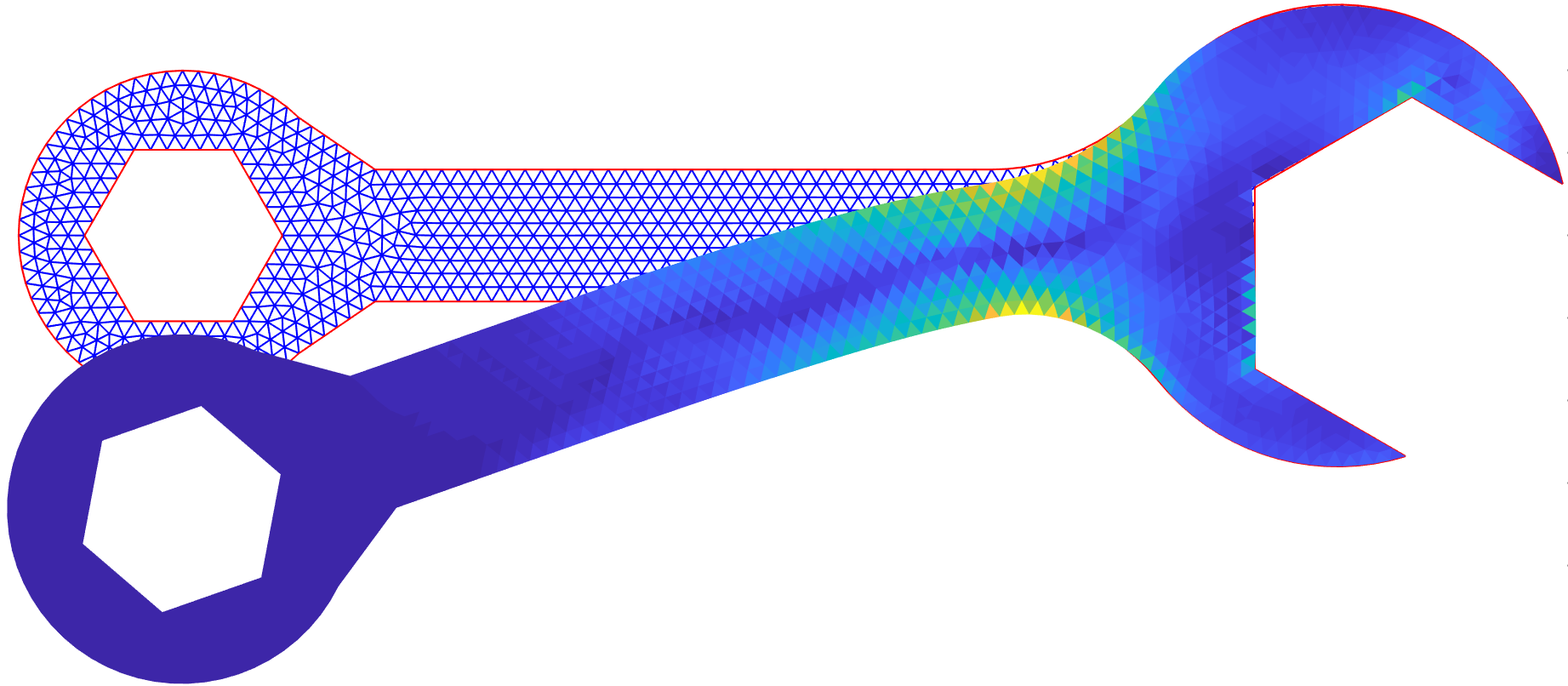}
   \caption{Mesh and realization of $u^h(X)$}\label{fig:Wrench_GEOMETRY_2}
  \end{subfigure}
  \caption{
   Left panel: Dirichlet condition $u=0$ (black chopped lines), vertical unitary linear forcing (red arrows), and point of interest $\mathrm{PoI}$ (green dot).
   Right panel: mesh and finite element solution $u^h(X)$ associated with one realization of $X$, the random Young's modulus. The color represents the von Mises stress.
  }
  \label{fig:Wrench_GEOMETRY}
\end{figure}

The gradients of $f$ are computed with the adjoint method; see for instance \cite{oberai2003solution}.
The complexity of computing the gradient increases with the number of elements in the mesh.
For this reason, we introduce a low-fidelity model $g$ that relies on the coarser mesh depicted in Figure~\ref{fig:Wrench_COARSE_FINE_SOLcoarse}. 
This coarse mesh contains 
$423$ 
elements, so that the function $g$ and its gradient can be evaluated with less computational effort compared to $f$. 
We now explain in detail how $g$ is defined. 
First, the log of the Young's modulus on the coarse mesh is defined as the piecewise constant field whose value at a given (coarse) element equals the spatial mean of $X=\log(E^h)$ restricted to that coarse element; see the illustration from Figure~\ref{fig:Wrench_COARSE_FINE_Efine} to Figure~\ref{fig:Wrench_COARSE_FINE_Ecoarse}.
Then, we compute the coarse finite element solution and we define $g(X)$ as the vertical displacement of the PoI.
Figure~\ref{fig:Wrench_COARSE_FINE_SOLfine} and Figure~\ref{fig:Wrench_COARSE_FINE_SOLcoarse} show that the fine and coarse solutions are similar.
With our implementation\footnote{Code available at \url{https://gitlab.inria.fr/ozahm/wrenchmark.git}.}, evaluating the gradient of the low-fidelity model $\gradapprox$ is approximately $7$ times faster than computing $\grad$.

\begin{figure}[h]
  \centering 
  \begin{subfigure}[t]{0.48\textwidth}
  \centering 
    \includegraphics[width = \textwidth]{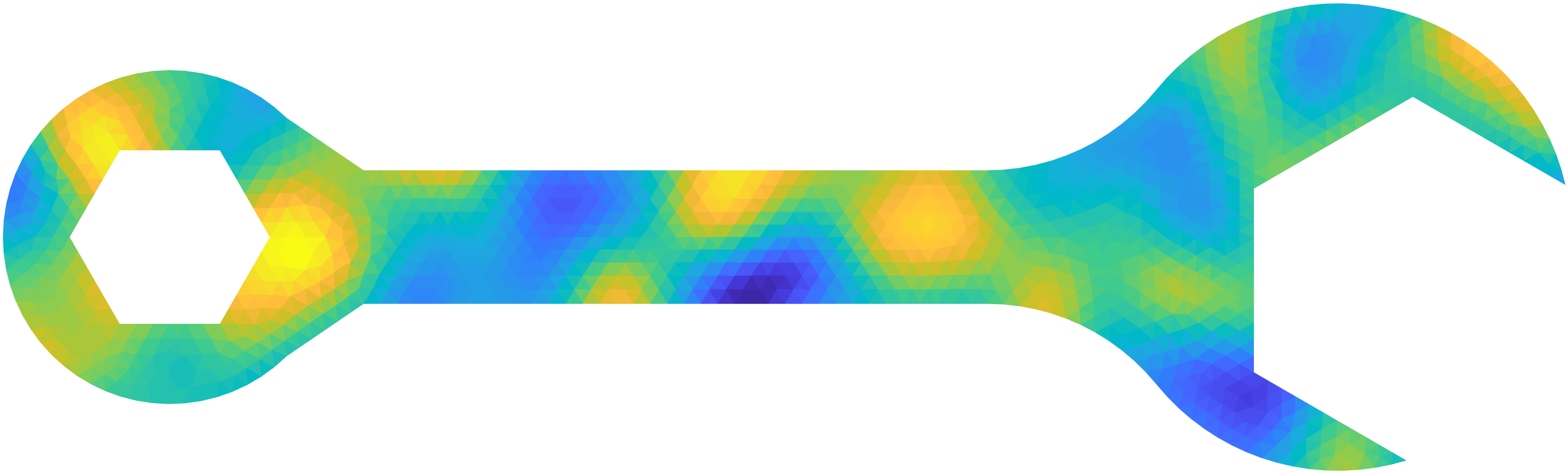} 
    \caption{Realization of $X=\log(E^h)$ on the fine mesh}\label{fig:Wrench_COARSE_FINE_Efine}
  \end{subfigure}
  ~
  \begin{subfigure}[t]{0.48\textwidth}
  \centering
    \includegraphics[width = \textwidth]{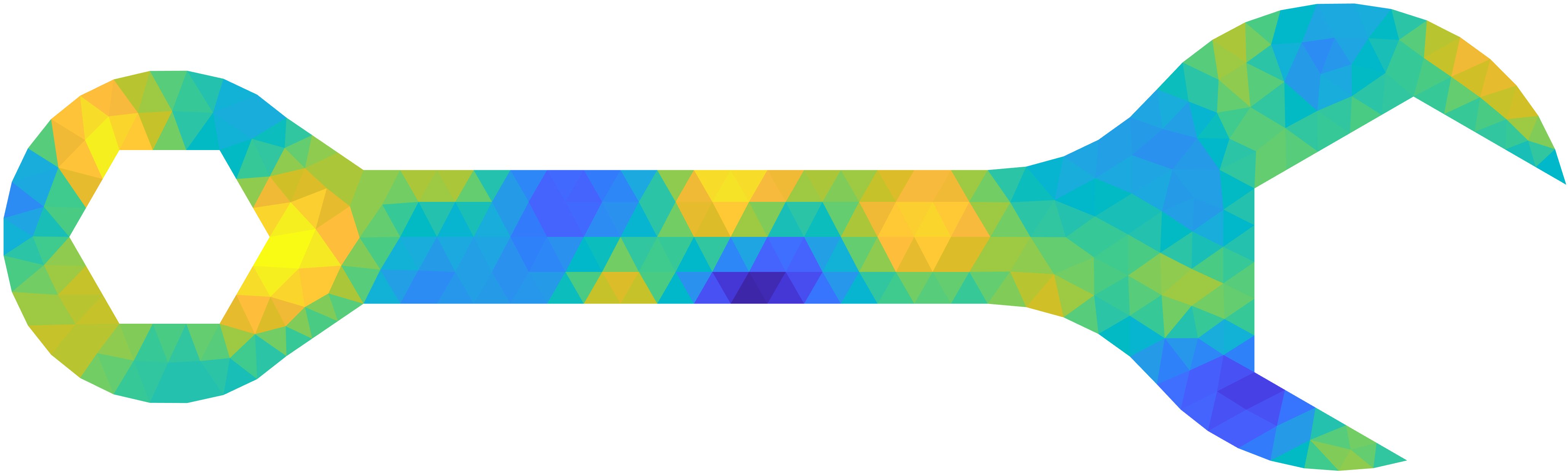}
   \caption{Projection of $X$ on the coarse mesh}\label{fig:Wrench_COARSE_FINE_Ecoarse}
  \end{subfigure}
  \begin{subfigure}[t]{0.48\textwidth}
  \centering 
    \includegraphics[width = \textwidth]{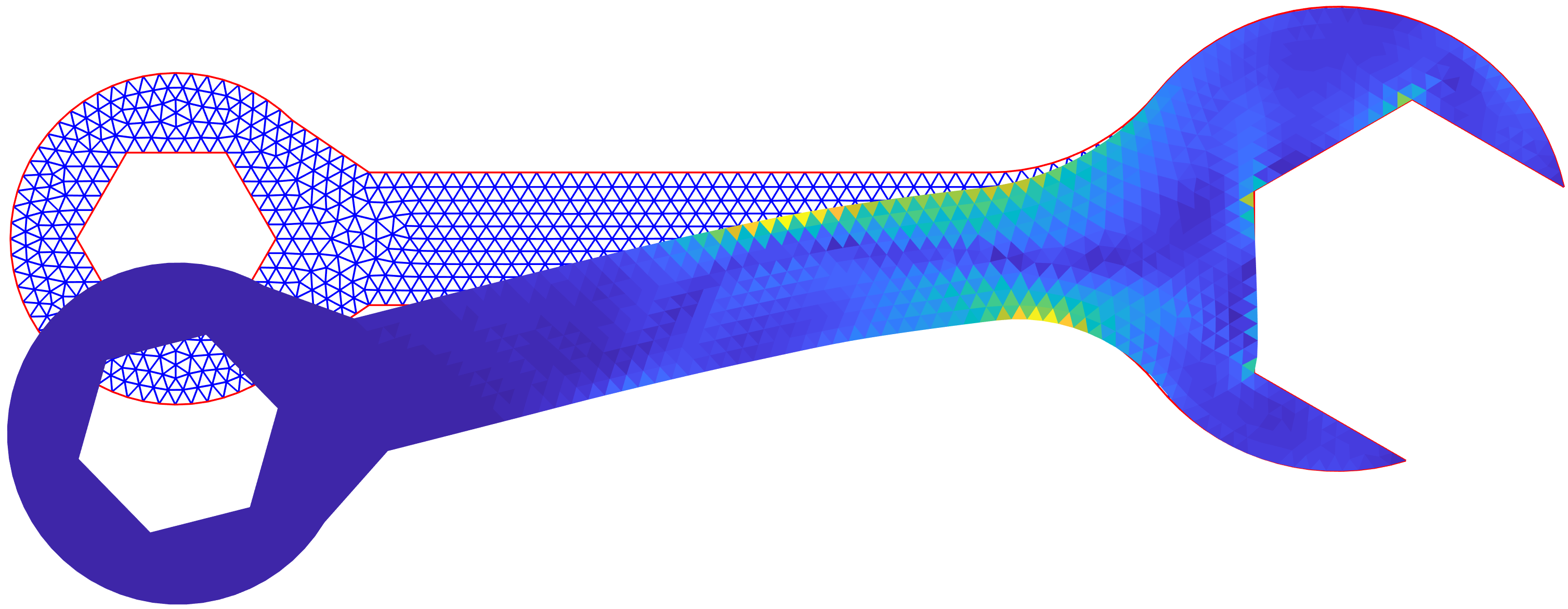} 
    \caption{Finite element solution on the fine mesh}\label{fig:Wrench_COARSE_FINE_SOLfine}
  \end{subfigure}
  ~
  \begin{subfigure}[t]{0.48\textwidth}
  \centering
    \includegraphics[width = \textwidth]{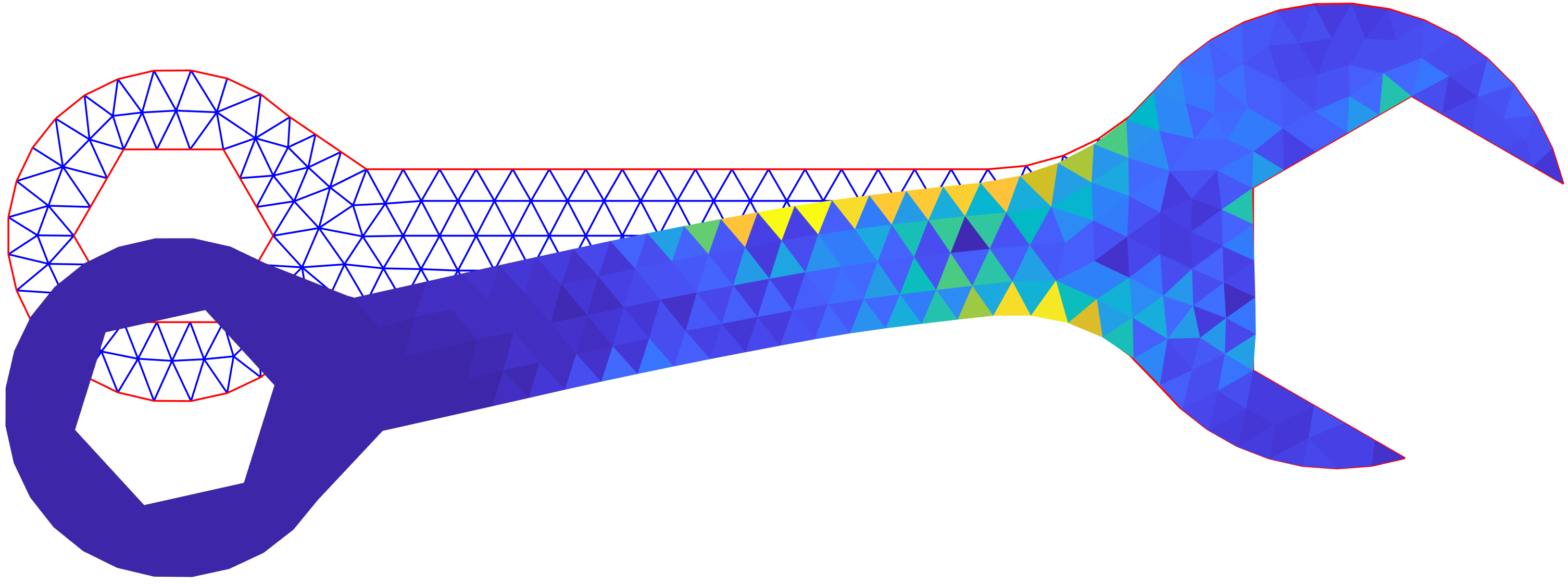}
   \caption{Finite element solution on the coarse mesh}\label{fig:Wrench_COARSE_FINE_SOLcoarse}
  \end{subfigure}
  \caption{
   Construction of the low-fidelity approximation $g$ of $f$ using a coarse mesh.
   Figure~\ref{fig:Wrench_COARSE_FINE_Efine} represents a realization of the Young's modulus on the fine mesh and Figure~\ref{fig:Wrench_COARSE_FINE_Ecoarse} its projection onto the coarser mesh.
   The corresponding finite element solution on the fine mesh (resp.\ coarse) and von Mises stress are represented in Figure~\ref{fig:Wrench_COARSE_FINE_SOLfine} (resp.\ Figure~\ref{fig:Wrench_COARSE_FINE_SOLcoarse}). 
  }
  \label{fig:Wrench_COARSE_FINE}
\end{figure}

Since the error bound \eqref{eq:AScontrol} holds when $X$ is a standard Gaussian random vector, we employ the following change of variables.
Denoting by $\Sigma$ the covariance matrix of $X\sim\mathcal{N}(0,\Sigma)$, we write $X = \Sigma^{1/2} X_\text{std} $
where $X_\text{std}\sim\mathcal{N}(0,I_d)$ and where $\Sigma^{1/2}$ is a positive square root of $\Sigma$.
After this change of variables, the high-fidelity and low-fidelity models are respectively $f_\text{std}: X_\text{std} \mapsto f(\Sigma^{1/2} X_\text{std})$ and $g_\text{std}: X_\text{std} \mapsto g(\Sigma^{1/2} X_\text{std})$ and the standardized AS matrix is $H_\text{std} = \expect[\nabla f_\text{std}(X_\text{std})\nabla f_\text{std}(X_\text{std})^T]$.
This change of variable can be interpreted as a preconditioning step of the AS matrix $H=\expect[\grad(X)\grad(X)^T]$ since we can write $H_\text{std} = \Sigma^{T/2}H\Sigma^{1/2}$.
For the sake of simplicity, we omit the subscript `std' and we use the notations $H$, $f$, and $g$  for the standardized version of the AS matrix, the high-fidelity model, and low-fidelity model.

\subsubsection{Numerical results}
We compute $10^{4}$ evaluations of the high-fidelity gradient $\grad$ and use them to form a SF estimator $\widehat{\matrixH}_{SF}^{\text{ref}}$ that we consider as a reference AS matrix.
We compare the performance of the MF and SF estimators computed on ten cases characterized by different computational budgets.
For a given case $\gamma\in\{1,\dots,10\}$, the SF estimator 
is computed with $m_{1}= 3 \gamma$ gradient evaluations while the MF estimator uses $m_{1}= 2 \gamma$ and $m_{2}= 5 \gamma$.
Given that evaluating $\gradapprox$ is 7 times faster than evaluating $\grad$, the computational costs of the SF and MF estimators are equivalent.
In the following, we refer to $\gamma$ as the cost coefficient, as we have set the computational budget to increase linearly with $\gamma$.

Figure~\ref{fig:wrench_error_spectrum} (left panel) shows the relative error with respect to the reference estimator $\widehat{\matrixH}_{SF}^{\text{ref}}$ as a function of the cost coefficient $\gamma$, averaged over 100 independent experiments.
The MF estimator outperforms the SF estimator for all the budgets tested.
In particular, the MF estimator (respectively SF estimator) reaches a relative error of 0.4 for $\gamma = 4$ (respectively $\gamma = 7$).
This represents a $42.8\%$ reduction in computational resources for the MF estimator.
Figure~\ref{fig:wrench_error_spectrum} (right panel) shows the eigenvalues of the AS matrix estimators, compared to those of the reference, for cost coefficient $\gamma=10$.
The MF estimator provides a better approximation of the spectral decay of the AS matrix than the SF estimator.
We note that the spectral decay suggests that a few modes ($\approx 5$) are sufficient to describe the behavior of the function $\obj$.
Finally, Figure~\ref{fig:wrench_modes} shows the two leading modes (eigenvectors) from the reference, the SF, and the MF estimators for cost coefficient $\gamma=10$.
We note that both the SF and the MF estimators recover the leading modes correctly.

\begin{figure}[h]
  \centering
  \begin{subfigure}[t]{0.48\textwidth}
  \centering
    \includegraphics[width = 0.9\textwidth]{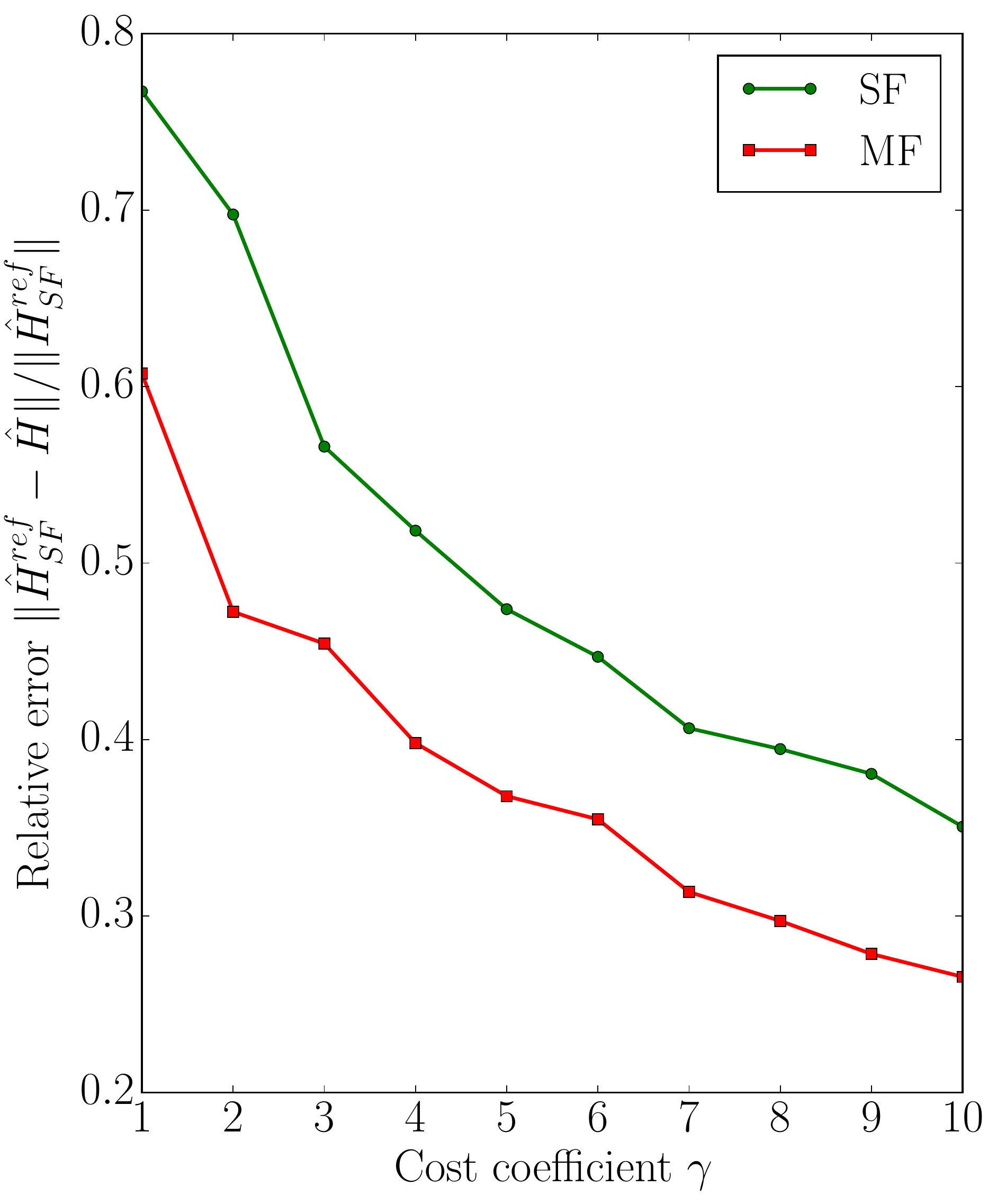}
  \end{subfigure}
  \begin{subfigure}[t]{0.48\textwidth}
  \centering
    \includegraphics[width = 0.9\textwidth]{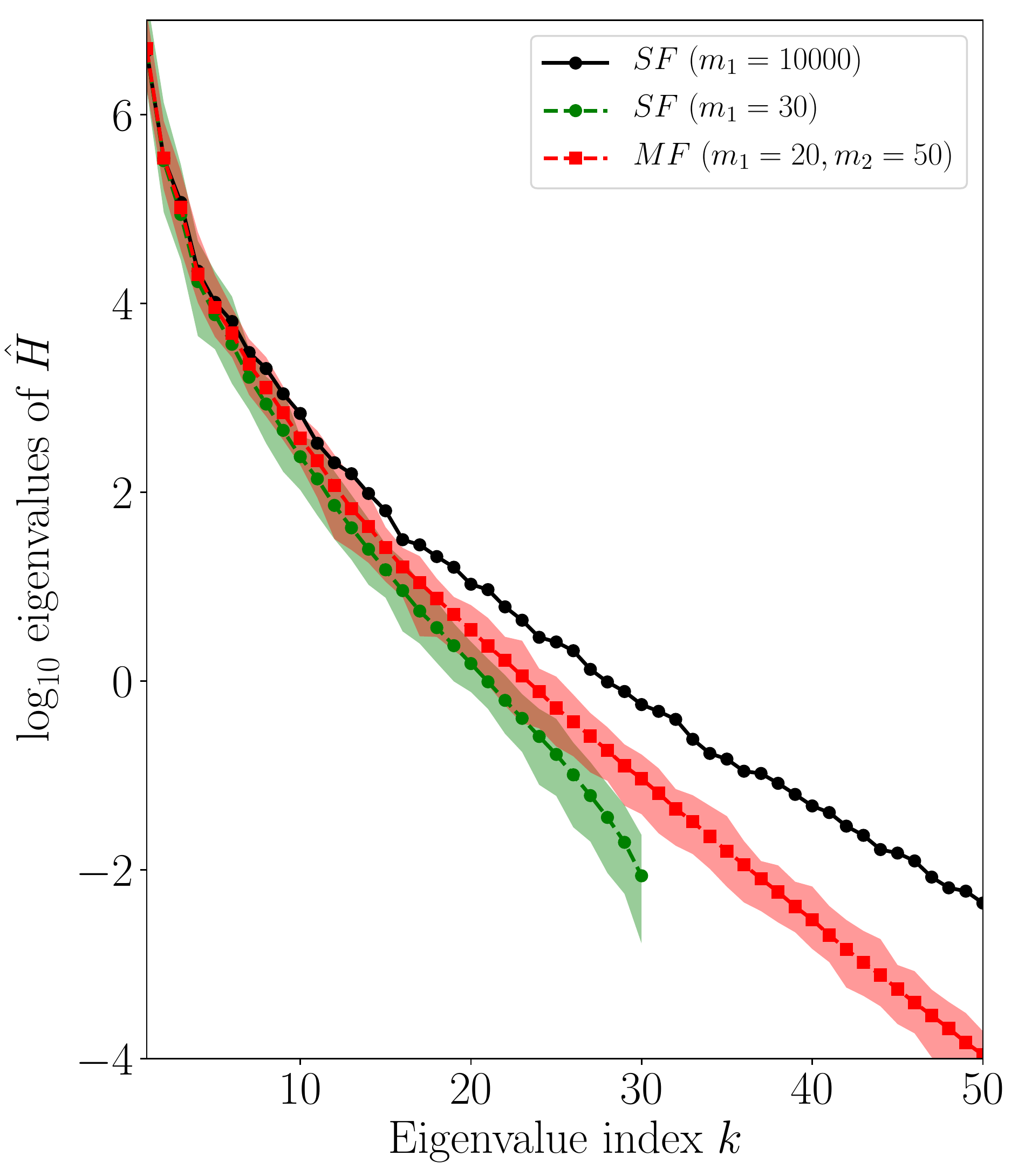}
    \end{subfigure}
  \caption{
  Left panel: Relative error as a function of cost coefficient $\gamma$, averaged over 100 independent experiments.
  At equivalent computational budget, the MF estimator outperforms the SF estimator.
  Right panel: Eigenvalues of the AS matrix estimators for the vertical displacement of the wrench PoI averaged over 100 independent experiments.
  Shadings represent the minimum and maximum values over the 100 independent experiments.
  The high-fidelity SF estimator used $m_{1}=30$ samples and
  the MF estimator is constructed with with $m_{1}=20$ and $m_{2}=50$ samples.
  This corresponds to a similar computational budget.
  }
  \label{fig:wrench_error_spectrum}
\end{figure}

\begin{figure}[h]
  \centering
    \includegraphics[width = 1.0\textwidth]{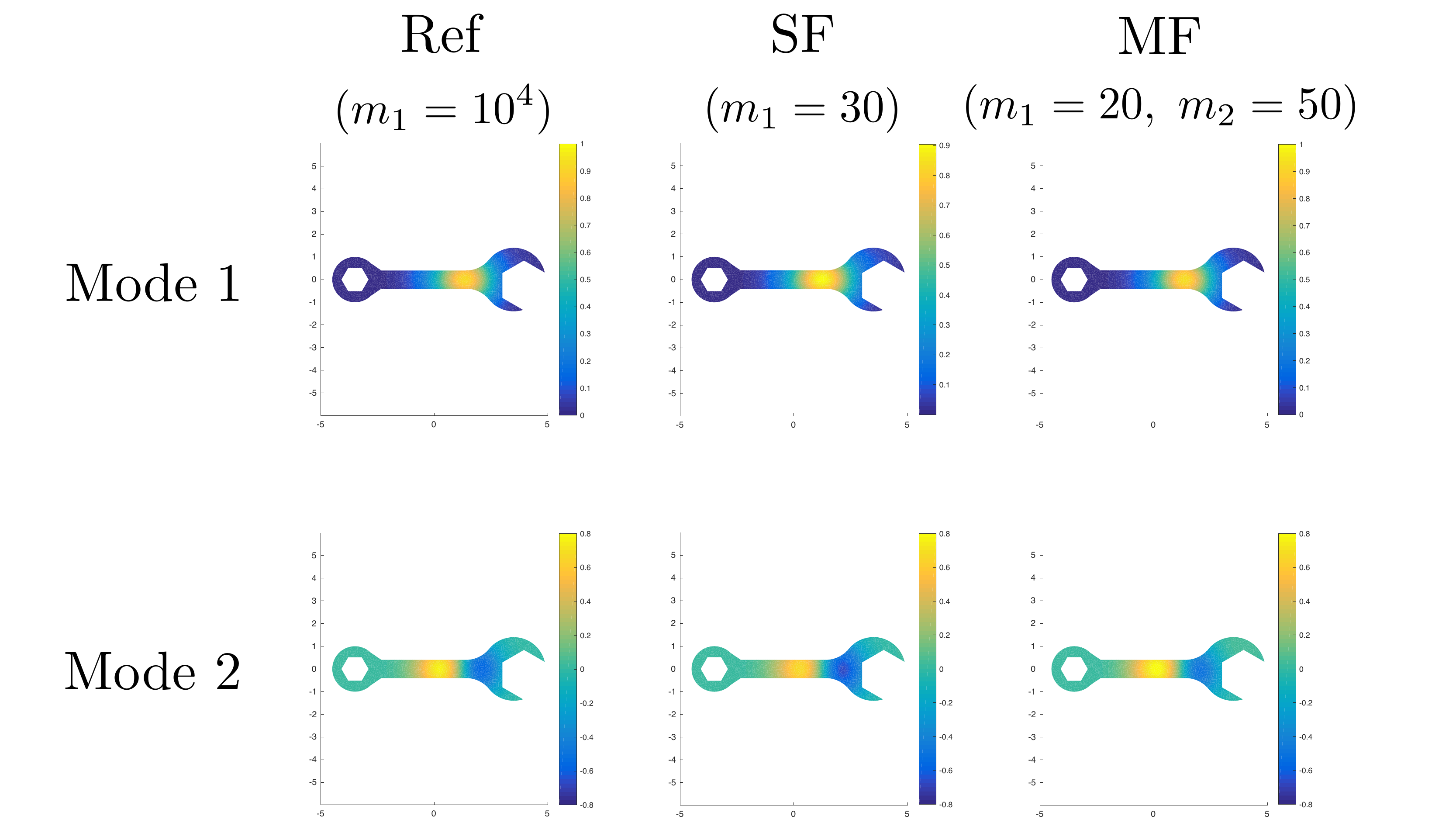}
    \caption{First  (top row) and second leading modes (bottom row) of the AS matrix for the reference estimator (left column), SF estimator (middle column) and MF estimator (right column) for the cost coefficient $\gamma=10$. Both the SF and the MF estimators correctly capture the two leading modes of the reference AS matrix.}
  \label{fig:wrench_modes}
\end{figure}

\subsection{ONERA M6 wing shape optimization} % (fold)
\label{sub:shape_optimization}

In this section, we illustrate the proposed MF estimator on an expensive-to-evaluate engineering example.

We consider the problem of finding the active subspace associated with the shape optimization of the ONERA M6 wing.
The shape of the wing is parameterized by free form deformation (FFD) box control points.
In our experiment we used $5\times8\times2=80$ FFD boxes.
Imposing second-order continuity of surfaces with the FFD fixes $5\times3\times2 =30$ variables, leaving $\dimension=50$ control points.
These correspond to the input variable $\design \in \inputspace=[-0.05,0.05]^{50}$.
The functions of interest are the drag and the lift coefficients.
Those quantities are computed using expensive-to-evaluate computational fluid dynamics (CFD) tools.
We use the SU2\footnote{\url{https://github.com/su2code/SU2/tree/ec551e427f20373511432e6cd87402304cc46baa}} package \cite{palacios2013stanford} to solve the  Reynolds-averaged Navier-Stokes (RANS) equations and compute the gradients using the continuous adjoint method.
The flow conditions are such that the Mach number is $M_\infty=0.8395$, the angle of attack of the wing is $\alpha = \ang{3.03}$, and the Reynolds number is $Re =11.72\times10^{6}$.
We use the Spalart-Allmaras turbulence model.

The high-fidelity function uses the aforementioned CFD model with the following stopping criteria:
the solution of the RANS equations and the associated adjoint are computed with a limit of $10^{4}$ iterations or fewer  if the Cauchy convergence criteria reaches $10^{-5}$ within this limit (i.e., maximum variation of the quantity of interest over 100 iterations is lower than $10^{-5}$).
For the low-fidelity function $\approxfunc$, we use the same model as the high-fidelity function $\obj$ but reduce the maximum number of iterations allowed for convergence from $10^{4}$ to $10^{3}$.
An evaluation of the high-fidelity model (drag and lift coefficients and associated gradients) is thus approximately $10$ times more expensive than the low-fidelity model.

We compute a total of $100$ high-fidelity evaluations and $500$ low-fidelity evaluations.
We use the 100 evaluations of the high-fidelity model to compute a first ``reference'' SF estimator ($m_{1}=100$) that we denote $\HSF^{(100)}$.
We split the collected data into 5 independent experimental batches and construct three estimators per batch.
The first is a SF estimator constructed with $m_{1}=10$ high-fidelity evaluations.
The second is a SF estimator built with $m_{1}=20$ high-fidelity evaluations.
The last estimator is a MF estimator with $m_{1}=10$ and $m_{2}=90$ evaluations.
For these three estimators, the $m_{1}$ high-fidelity evaluations are common.
We note that the computational cost of the second SF estimator ($m_{1}=20$) and the MF estimator ($m_{1}=10$ and $m_{2}=90$) are similar.

\begin{figure}[h]
  \centering
  \begin{subfigure}[t]{0.48\textwidth}
  \centering
    \includegraphics[width = 0.9\textwidth]{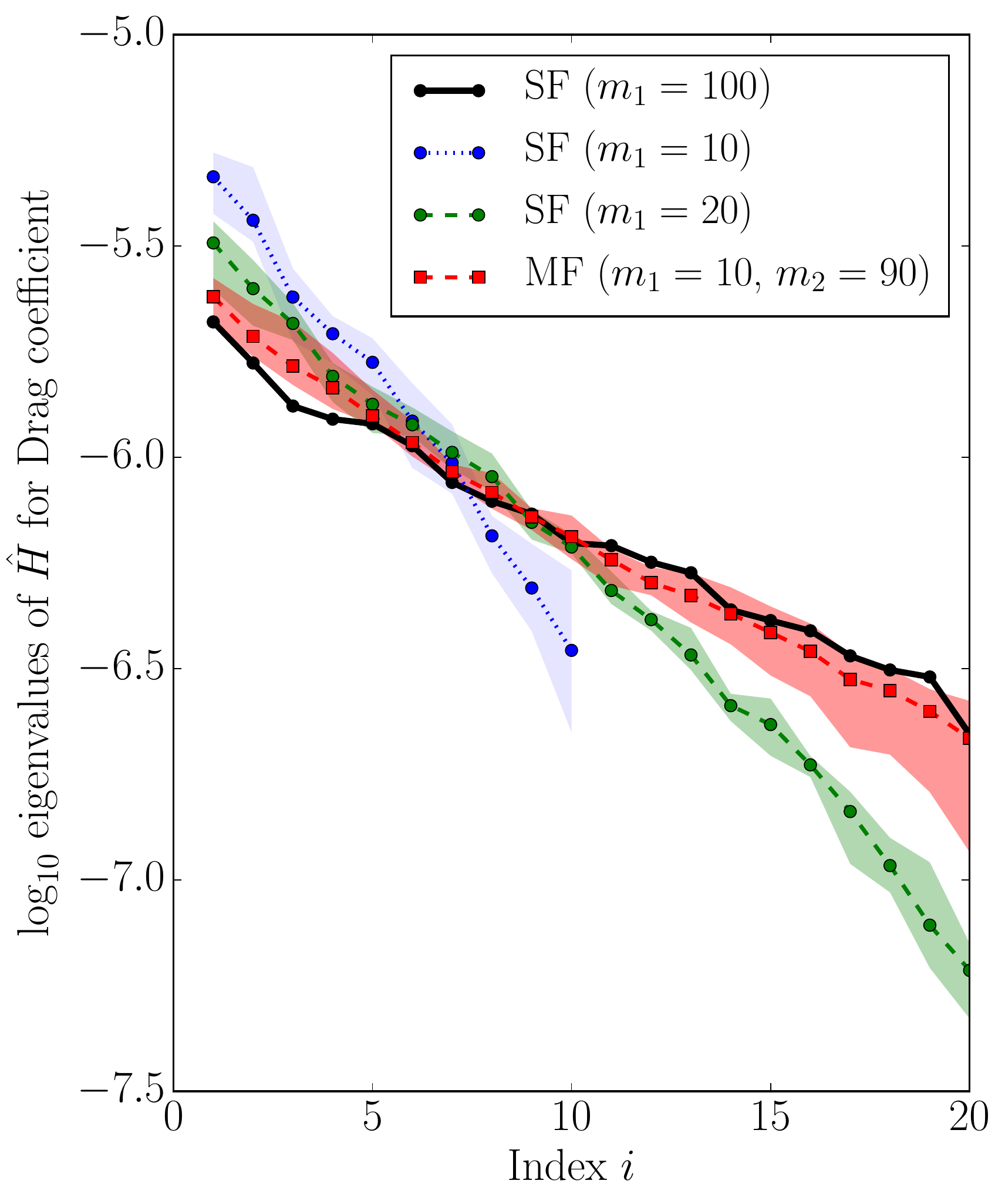}
  \end{subfigure}
  \begin{subfigure}[t]{0.48\textwidth}
  \centering
    \includegraphics[width = 0.9\textwidth]{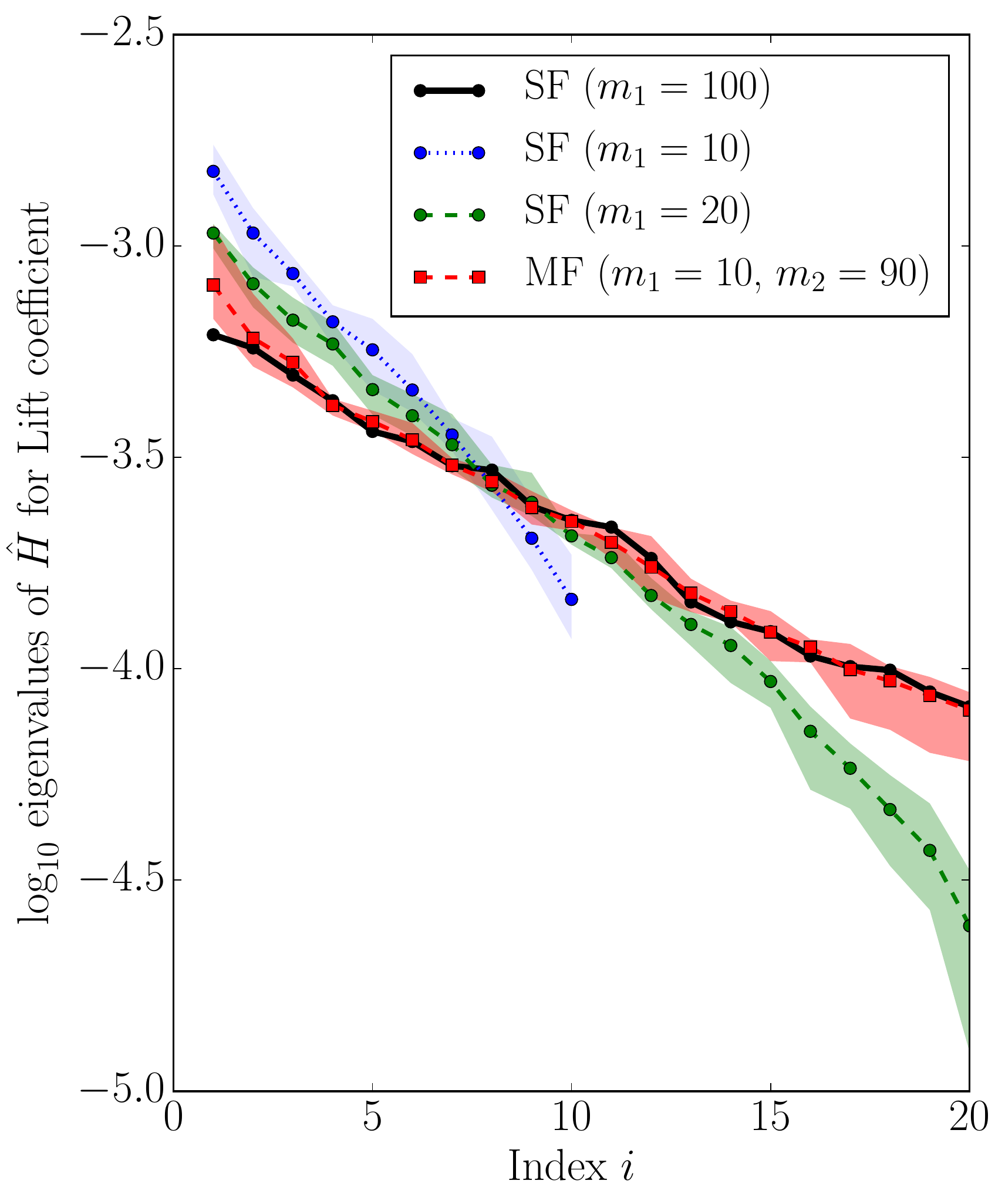}
    \end{subfigure}
  \caption{Eigenvalues of the AS matrix estimators for the drag coefficient (left panel) and for the lift coefficient (right panel) averaged over 5 independent experiments.
  The SF estimator with $m_{1}=20$ and the MF estimator (dashed lines) used the same computational budget to evaluate the gradients.
  At equal budget, the MF estimator provides a better estimate of the spectrum of $\HSF^{(100)}$ than the SF estimator.
  Shadings represent maximum and minimum values over the 5 independent experiments.
  }
  \label{fig:ONERA_comparison}
\end{figure}
\begin{figure}[h]
  \centering
  \begin{subfigure}[t]{0.48\textwidth}
  \centering
    \includegraphics[width = 0.9\textwidth]{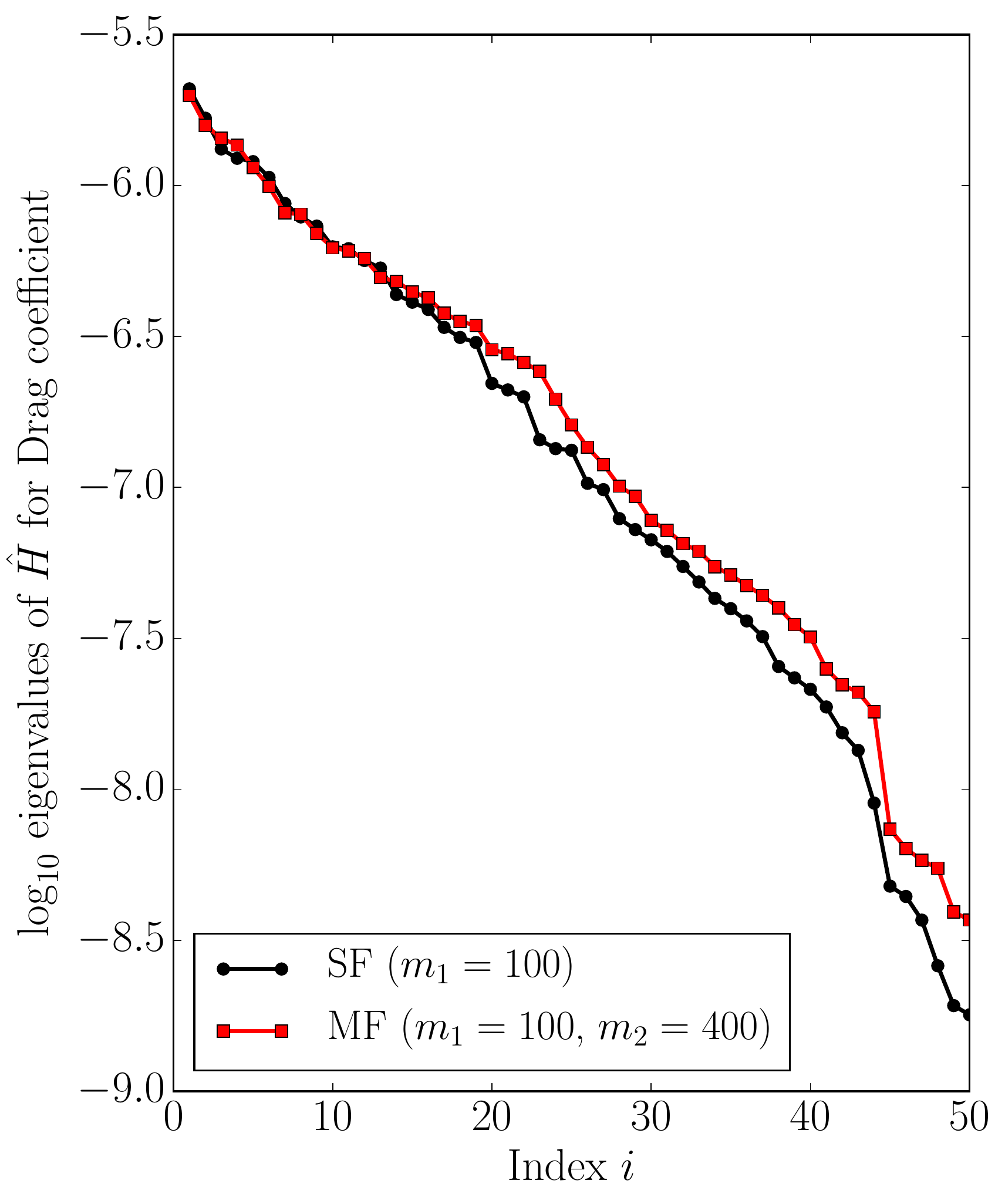}
  \end{subfigure}
  \begin{subfigure}[t]{0.48\textwidth}
  \centering
    \includegraphics[width = 0.9\textwidth]{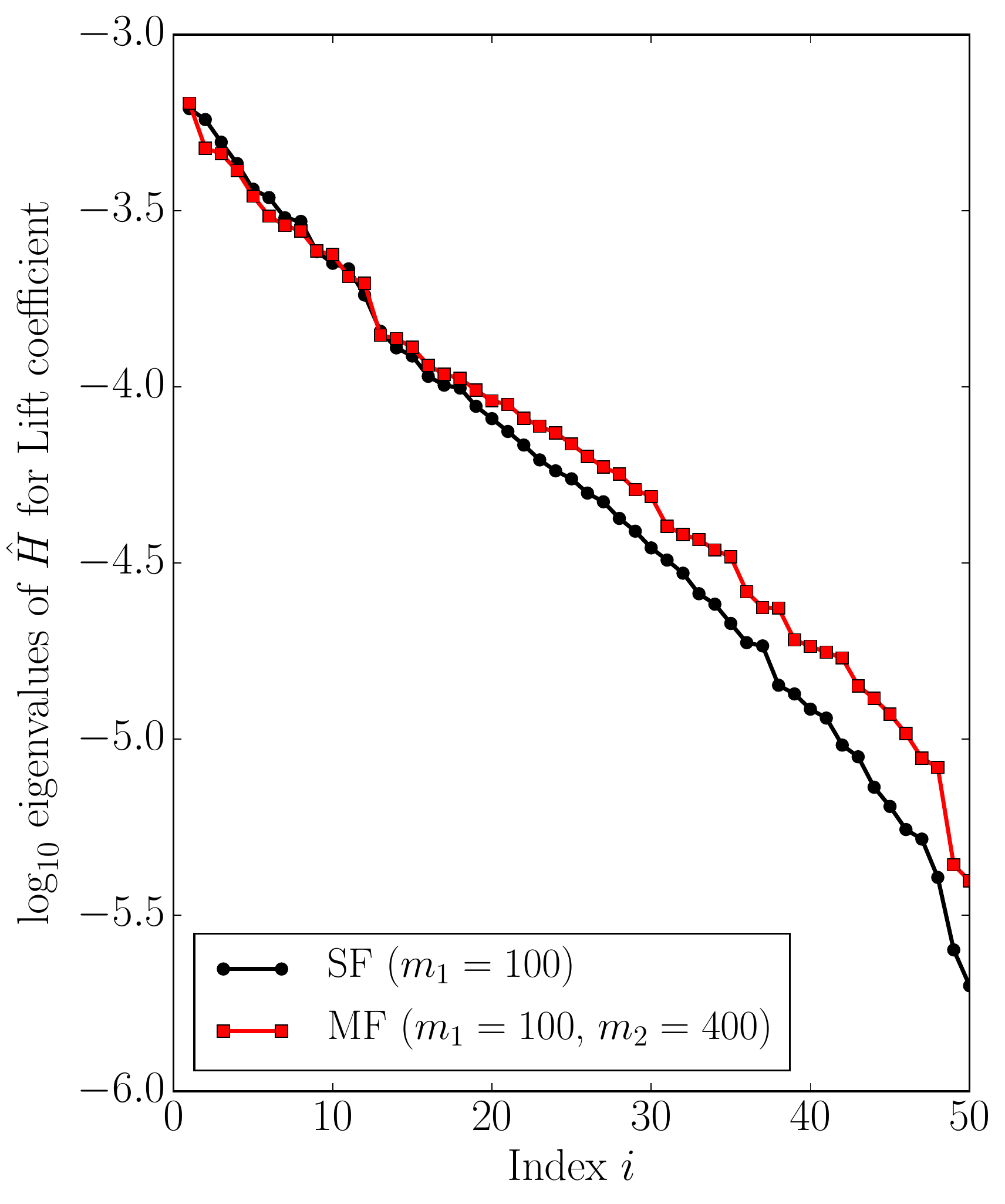}
    \end{subfigure}
  \caption{Eigenvalues of the AS matrix estimators for the drag coefficient (left panel) and for the lift coefficient (right panel).
  The high-fidelity SF estimator used $m_{1}=100$ samples and
  the MF estimator is constructed with with $m_{1}=100$ and $m_{2}=400$ samples.}
  \label{fig:ONERA_reference}
\end{figure}

Figure~\ref{fig:ONERA_comparison} shows the 20 first eigenvalues of the three estimators (averaged over 5 independent batches) and $\HSF^{(100)}$.
Note that the SF estimators (except $\HSF^{(100)}$) are rank deficient (rank 10 and rank 20).
The MF estimator is full rank (rank 50).
The MF estimator is closer to $\HSF^{(100)}$ than the two SF estimators.
In particular, the MF estimator outperforms the second SF estimator with similar computational budget.
The error bars show the maximum and the minimum values over the 5 independent experiments for each estimator and confirm the robustness of the proposed method.

Figure~\ref{fig:ONERA_reference} shows the 50 eigenvalues of the SF estimator $\HSF^{(100)}$ ($m_{1}=100$) and a MF estimator using all the available evaluations ($m_{1}=100$, $m_{2}=400$).
The leading eigenvalues are similar for both estimators.
The difference between the two estimators increases for lower eigenvalues (higher indices).
For both estimators, we compute approximations of the characteristic quantities $\intdim$, $\normE$, $\normH$, $\beta^{2}$, and $\theta^{2}$ and summarize them in Table~\ref{tab:estimator_characteristics}.
The SF quantities are computed based on the $m_{1}=100$ high-fidelity evaluations: 
$\normH$ and $\intdim$ are computed using $\widehat{\matrixH}_{SF}^{(100)}$ in lieu of $\matrixH$, 
$\normE$ is approximated by $\frac{1}{m_{1}}\sum_{i=1}^{m_{1}}\Vert\grad(X_{i})\Vert^{2}$, 
while $\beta^{2}$ is approximated by 
$\max_{i}\Vert\grad(X_{i})\Vert^{2}/\normE$ for $i\in\{1, \dots,m_{1}\}$.
The MF quantities are computed based on the $m_{1}=100$ high-fidelity evaluations and $m_{1}+m_{2}=500$ low-fidelity evaluations:
$\normH$ and $\intdim$ are computed using the MF estimator in lieu of $\matrixH$,
$\normE$ is approximated by the  (scalar) MF estimator $\frac{1}{m_{1}}\sum_{i=1}^{m_{1}}(\Vert\grad(X_{i})\Vert^{2}-\Vert\gradapprox(X_{i})\Vert^{2})+\frac{1}{m_{2}}\sum_{i=m_{1}+1}^{m_{1}+m_{2}}\Vert\gradapprox(X_{i})\Vert^{2}$, 
while  $\theta^{2}$ is approximated by 
$\max_{i}\Vert\grad(X_{i})-\gradapprox(X_{i})\Vert^{2}/\normE$ for $i\in\{1, \dots,m_{1}\}$.
\begin{table}[h]
      \caption{Approximation of the characteristic quantities for the SF and MF estimators for the drag and lift coefficients.}
      \begin{minipage}{\textwidth}
      \centering
        \vspace{5pt}
        \setlength{\tabcolsep}{10pt}
        \begin{tabular}{ccccc}
        \toprule
        &\multicolumn{2}{c}{Drag coefficient $C_{d}$}&\multicolumn{2}{c}{Lift coefficient $C_{l}$} \\
        \cmidrule(r){2-3}\cmidrule(r){4-5}
            & SF & MF& SF & MF \\
        \midrule
        $\normE$       &   $1.74\times 10^{-5}$   &   $1.81\times 10^{-5}$     &   $5.96\times 10^{-3}$   &   $6.08\times 10^{-3}$  \\
        $\normH$       &   $2.09\times 10^{-6}$   &    $1.99\times 10^{-6}$    &   $6.16\times 10^{-4}$   &   $6.38\times 10^{-4}$  \\
        $\intdim$      &   8.34   &    9.10    &   9.67   &  9.53   \\
        $\beta^{2}$    &   1.76   &    -    &  1.54    &   -  \\
        $\theta^{2}$   &   -   &    $1.30\times 10^{-5}$    &   -   &   $1.03\times 10^{-4}$  \\
        \bottomrule
        \end{tabular} 
      \end{minipage}
       \label{tab:estimator_characteristics}
    \end{table}
From Table~\ref{tab:estimator_characteristics}, it can be seen that the intrinsic dimension of the AS matrix is approximately 9 for both the drag and lift coefficients.
Table~\ref{tab:estimator_characteristics} also shows that the parameter $\beta^{2}$ is not large ($\leq 2$), which indicates, along with the low intrinsic dimension, that computing a good estimator of $\matrixH$ requires relatively few evaluations.
We also note that the low-fidelity model is a good approximation of the high-fidelity model, with $\theta^{2}\leq1.5\times10^{-5}$ for the drag and $\theta^{2}\leq1.1\times10^{-4}$ for the lift.
Figure~\ref{fig:ONERA_energy} shows the normalized sum of the eigenvalues of the best rank-$r$ approximation of $\widehat{\matrixH}$.
Selecting the 20 first eigenvectors of $\widehat{\matrixH}$ allows us to capture more than 80\% of the spectral content and decreases by 30 the dimensionality of the shape optimization problem (a $60\%$ decrease).

\begin{figure}[h]
  \centering
    \includegraphics[width = 0.7\textwidth]{./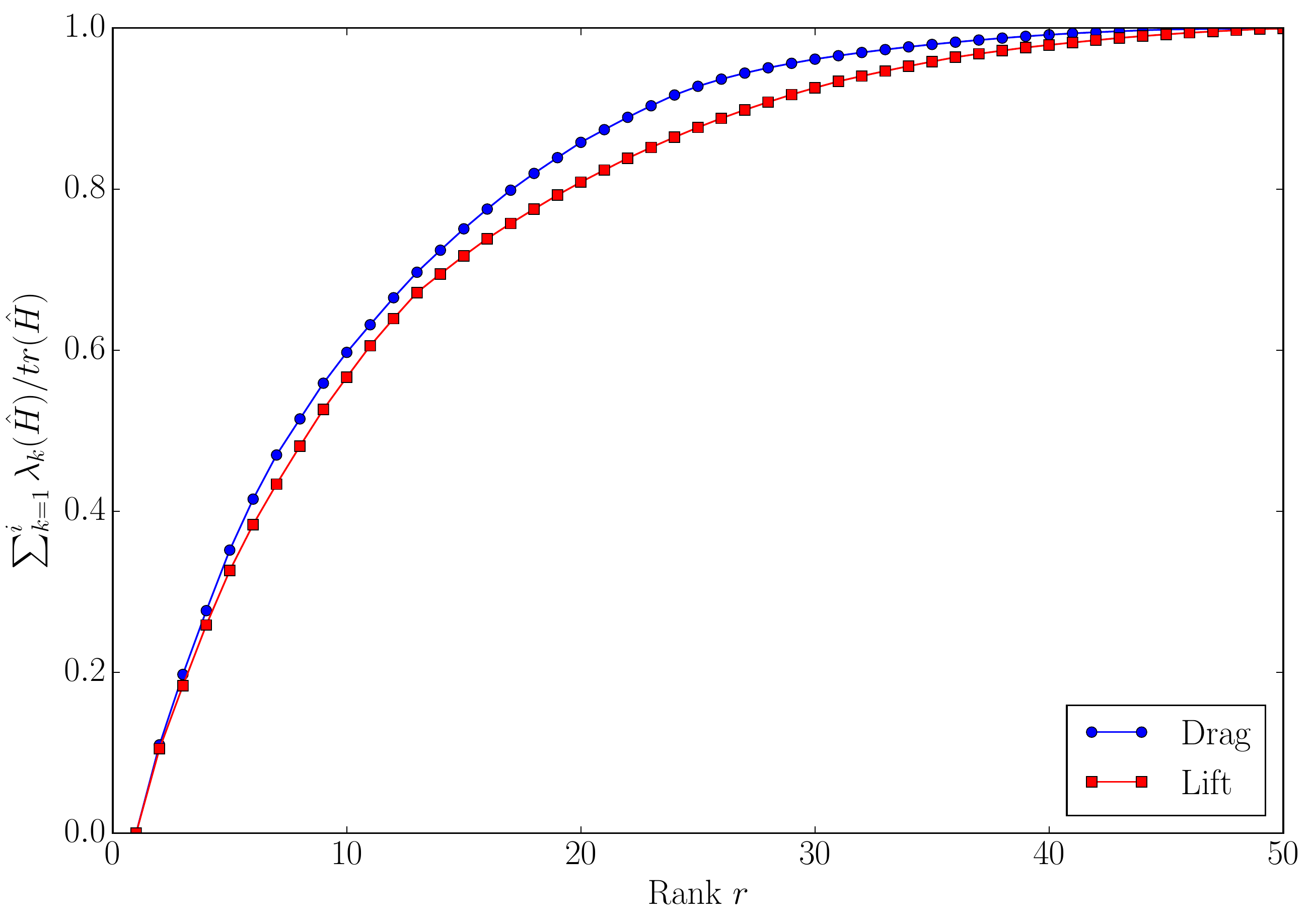}
    \caption{Percentage of the spectrum $\sum_{k=1}^{i}\lambda_{r}(\widehat{\matrixH})/ tr(\widehat{\matrixH})$ captured by the best rank-$r$ approximation of $\widehat{\matrixH}$, where $\lambda_{k}(\widehat{\matrixH})$ is the $k^{th}$ eigenvalue of $\widehat{\matrixH}$.}
  \label{fig:ONERA_energy}
\end{figure}

\section{Conclusions}
\label{sec:conclusions}
We proposed a multifidelity approach to identify low-dimensional subspaces capturing most of the variation of a function of interest.
Our approach builds on the gradient-based active subspace methodology, which seeks to compute the matrix $\matrixH$ containing the second moments of the gradient function.
The proposed approach reduces the computational cost of Monte Carlo methods used to estimate this matrix, by using a low-fidelity, cheap-to-evaluate gradient approximation as a control variate.
The performance improvements of the resulting multifidelity (MF) estimator $\HMF$ are demonstrated on two engineering examples governed by partial differential equations:
a high-dimensional linear elasticity problem defined over more than two thousand input variables and an expensive shape optimization problem defined over 50 input variables.

Analysis of the performance of the multifidelity technique yields error bounds for the matrix error $\Vert\matrixH-\HMF\Vert$ both in expectation and with high probability.
These error bounds depend on the intrinsic dimension of the problem, which is related to the spectral decay of the active subspace matrix.
When a function varies mostly along a few directions, the intrinsic dimension is low.
In such a case, approximating $\matrixH$ may require only a small number of gradient evaluations.
This relationship was confirmed empirically by a parametric study conducted on two analytical problems: lower intrinsic dimension led to lower matrix error for the same number of high-fidelity evaluations.

The performance improvements of the multifidelity approach are threefold.
First, we showed that the MF estimator reduces the cost of performing dimension reduction.
This was illustrated on the linear elasticity problem, where the multifidelity approach needed about $43\%$ less computational effort than its single-fidelity counterpart to achieve the same relative error.
Second, we showed that the multifidelity approach was able to recover eigenvectors qualitatively similar to the exact ones.
Third, the MF estimator led to better estimates of the spectral decay of $\matrixH$.
On the linear elasticity problem, which is characterized by a low intrinsic dimension ($\leq 5$), the multifidelity approach led to better estimates, especially for smaller eigenvalues.
On the shape optimization problem, which is characterized by a higher intrinsic dimension ($\approx 9$), the MF estimator led to better estimates for all eigenvalues.
This behavior is in contrast to the single-fidelity method, which overestimated the leading eigenvalues and underestimated the lower eigenvalues, thereby underestimating the intrinsic dimension of $\matrixH$ and overestimating the spectral decay.
Recovering the spectral decay of $\matrixH$ is particularly important since one popular way of choosing the dimension $r$ of the active subspace is based on the spectral gap between consecutive eigenvalues.
If the spectral decay is overestimated, $r$ might be chosen too small to correctly capture the behavior of the high-fidelity function.
By providing a better estimate of the spectral decay, the MF estimator reduces this risk.

The dimension of the active subspace can also be chosen to control an error bound on the associated function approximation error, i.e., when using the active subspace to construct a ridge approximation of the original function.  
We showed that the approximation of this bound, which depends on the unknown $\matrixH$, improves as the matrix error decreases.  Because the MF estimator reduces the error in estimating $\matrixH$ for a given computational effort, the proposed multifidelity approach leads to a better selection of the active subspace dimension $r$.

\appendix
\section{Proofs of main results}
\label{sec:appendices}
In this section, we present the details of the proofs of Proposition \ref{prop:MainResult} (Appendix \ref{sec:MainProof}) and Proposition \ref{prop:SingleFidelityResult} (Appendix \ref{sec:SingleFidelityResult}).

\subsection{Proof of Proposition \ref{prop:MainResult}}\label{sec:MainProof}

The proof of Proposition \ref{prop:MainResult} relies on the concentration inequality known as the matrix Bernstein theorem.
It corresponds to Theorem~6.1.1 from \cite{tropp_intro_MAL-048} restricted to the case of real symmetric matrices.

\begin{theorem}[Matrix Bernstein: real symmetric case \cite{tropp_intro_MAL-048}]
\label{th:bernsteinNONintrinsic}
  Let $S_1,\hdots,S_m$ be $m$ independent zero-mean random symmetric matrices in $\mathbb{R}^{\dimension\times\dimension}$.
  Assume there exists $L<\infty$ such that 
  \begin{align*}
    \Vert S_i \Vert \leq L ,
  \end{align*}
  almost surely for all $1\leq i \leq m$, and let $v$ be such that
  $$
   \| \expect[ (S_1+\hdots+S_m)^2] \| \leq v.
  $$
  Then,
  \begin{align*}
    \expect [\Vert S_1+\hdots+S_m \Vert] 
    & \leq  \sqrt{2 v \log(2\dimension) } + \frac{1}{3} L \log(2d) .
  \end{align*}
  Furthermore, for any $t\geq 0$ we have
  \begin{align*}
    \proba\{\Vert S_1+\hdots+S_m \Vert \geq t\} \leq 2\dimension \exp\left(\frac{-t^{2}/2}{v+Lt/3}\right) .
  \end{align*}
  
\end{theorem}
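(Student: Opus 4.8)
The plan is to prove this by the \emph{matrix Laplace transform method}, the matrix analogue of the classical Cramér–Chernoff argument, following Tropp's monograph. Write $Y=S_1+\cdots+S_m$. Since $Y$ is symmetric, $\|Y\|=\max\{\lambda_{\max}(Y),\lambda_{\max}(-Y)\}$, so I would first reduce both conclusions to one-sided statements about $\lambda_{\max}(Y)$: for the tail, a union bound over $Y$ and $-Y$; for the expectation, the elementary inequality $e^{\theta\|Y\|}\le e^{\theta\lambda_{\max}(Y)}+e^{\theta\lambda_{\max}(-Y)}$, valid for $\theta\ge0$. Because $-Y=\sum_i(-S_i)$ with the $-S_i$ still zero-mean, still bounded in norm by $L$, and still satisfying the same second-moment bound $v$, it suffices to treat $\lambda_{\max}(Y)$, and the two factors of $2$ appearing in the statement ($2d$ in the tail, $\log(2d)$ in the expectation) will come from combining the two sides.

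The core estimate is the bound on the \emph{trace moment generating function}. For every $\theta>0$ one has $\proba\{\lambda_{\max}(Y)\ge t\}\le e^{-\theta t}\,\expect\,e^{\theta\lambda_{\max}(Y)}\le e^{-\theta t}\,\expect\,\trace\exp(\theta Y)$, using monotonicity of $x\mapsto e^{\theta x}$ and the fact that the trace exponential dominates the exponential of the top eigenvalue. Everything then reduces to controlling $\expect\,\trace\exp(\theta Y)$. Here I would invoke \emph{Lieb's concavity theorem} — the concavity of $A\mapsto\trace\exp(M+\log A)$ on the positive-definite cone — which, combined with Jensen's inequality and the tower property of conditional expectation applied one summand at a time, yields subadditivity of matrix cumulants: $\expect\,\trace\exp\!\big(\textstyle\sum_i\theta S_i\big)\le\trace\exp\!\big(\sum_i\log\expect\,e^{\theta S_i}\big)$. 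This subadditivity step is the single genuinely deep ingredient, and it is where I expect the main difficulty to lie; the remaining steps are essentially bookkeeping.

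Next I would bound each factor $\log\expect\,e^{\theta S_i}$. Using $\expect S_i=0$, the operator bound $-LI\preccurlyeq S_i\preccurlyeq LI$, and a scalar estimate of the form $e^{x}\le 1+x+x^2\varphi(L\theta)$ with $\varphi$ increasing (so that $e^{\theta S_i}\preccurlyeq I+\theta S_i+\theta^2\varphi(L\theta)S_i^2$ in the semidefinite order), taking expectations and then using $I+A\preccurlyeq e^A$ gives $\expect\,e^{\theta S_i}\preccurlyeq\exp\!\big(g(\theta)\,\expect S_i^2\big)$ with $g(\theta)=\dfrac{\theta^2/2}{1-\theta L/3}$ for $0<\theta<3/L$; applying the operator-monotone logarithm and summing (cross terms $\expect[S_iS_j]$ vanish by independence and zero mean, so $\sum_i\expect S_i^2=\expect Y^2$) yields $\sum_i\log\expect\,e^{\theta S_i}\preccurlyeq g(\theta)\,\expect Y^2$. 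Feeding this into the subadditivity bound and using monotonicity of $A\mapsto\trace\exp(A)$ in the semidefinite order gives $\expect\,\trace\exp(\theta Y)\le d\exp\!\big(g(\theta)\,\lambda_{\max}(\expect Y^2)\big)\le d\exp\!\big(g(\theta)\,v\big)$, since $\expect Y^2$ is positive semidefinite and hence $\lambda_{\max}(\expect Y^2)=\|\expect Y^2\|\le v$.

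Finally I would optimize over $\theta\in(0,3/L)$. For the tail bound, this gives $\proba\{\lambda_{\max}(Y)\ge t\}\le d\exp(-\theta t+g(\theta)v)$, and the choice $\theta=t/(v+Lt/3)$ produces the exponent $-\tfrac{t^2/2}{v+Lt/3}$; the union bound over $\pm Y$ supplies the factor $2d$. For the expectation bound, Jensen's inequality applied to $\expect\,e^{\theta\|Y\|}\le 2d\,e^{g(\theta)v}$ gives $\theta\,\expect\|Y\|\le\log(2d)+g(\theta)v$, hence $\expect\|Y\|\le\inf_{0<\theta<3/L}\big(\tfrac{\log(2d)}{\theta}+\tfrac{\theta v/2}{1-\theta L/3}\big)$, and a standard one-variable optimization (or the convenient suboptimal choice that decouples the two terms) yields $\sqrt{2v\log(2d)}+\tfrac13 L\log(2d)$. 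The scalar transfer lemma in the third paragraph and this last optimization are routine calculus, and I would simply cite them rather than recompute them.
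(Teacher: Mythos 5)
The paper offers no proof of this theorem---it is quoted verbatim from Tropp's monograph---and your sketch is precisely the argument given there: reduction to $\lambda_{\max}$ via $\pm Y$, the matrix Laplace transform bound through the trace exponential, subadditivity of matrix cumulants via Lieb's concavity theorem, the Bernstein-type MGF bound $\expect e^{\theta S_i}\preccurlyeq\exp\bigl(g(\theta)\,\expect S_i^2\bigr)$ with $g(\theta)=\tfrac{\theta^2/2}{1-\theta L/3}$, and the final optimization over $\theta$. All the steps, including the substitution that yields $\sqrt{2v\log(2d)}+\tfrac{1}{3}L\log(2d)$, check out, so your proposal is correct and takes the same route as the cited source.
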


In order to apply the matrix Bernstein theorem, we need to express the difference between the AS matrix $\matrixH$ and the MF estimator $\HMF$ as the sum of independent matrices.
We write 
$
 \matrixH-\HMF = S_1 + \hdots + S_m ,
$
where $m=m_1+m_2$ and 
$$
  S_i = 
  \left\{\begin{array}{ll} 
   \frac{1}{m_1} \big( \matrixH-\matrixG-(\grad(X)\grad(X)^T-\gradapprox(X)\gradapprox(X)^T) \big),
   & \text{if } 1\leq i\leq m_1 ,\\ 
   \frac{1}{m_2}\big(\matrixG-\gradapprox(X)\gradapprox(X)^T \big).
   & \text{otherwise},
  \end{array} 
  \right.  
$$
where $G = \expect[\gradapprox(X)\gradapprox(X)^{T}]$.
The following property provides bounds for $\|S_i\|$ and $\expect [\Vert S_1+\hdots+S_m \Vert]$.
Those bounds are expressed as functions of $m_{1}$, $m_{2}$, $\beta$, $\theta$, and $\normE$.

\begin{property}
  \label{prop:MFcharacteristics}
  Assumptions \eqref{eq:Assumption BETA} and \eqref{eq:Assumption THETA} yield
  \begin{equation}\label{eq:MFcharacteristics_VARIANCE}
    \| \expect[ (S_1+\hdots+S_m)^2] \|
    \leq \left( \frac{\theta^{2} (2+\theta)^{2}}{m_1}  + \frac{(\theta+\beta)^{2}(1 + \theta)^{2}}{m_2}\right)\normE^{2},
  \end{equation}
  and
  \begin{equation}\label{eq:MFcharacteristics_BOUND}
    \|S_i\| \leq \operatorname{max}\left\{\frac{2\theta(2\beta+\theta)}{m_1};\frac{2(\theta+\beta)^{2}}{m_2}\right\}\normE .
  \end{equation}
  almost surely for all $1\leq i\leq m$.
\end{property}

\begin{proof}
  We first derive the bound \eqref{eq:MFcharacteristics_VARIANCE} for the variance of the estimator before proving the bound \eqref{eq:MFcharacteristics_BOUND} for the norm of the summands.

  Using the independence of the summands $S_i$ and the fact that $\expect[S_i]=0$, we have
 \begin{align}
   \expect[ (S_1+\hdots+S_m)^2]  
  &=  \expect[S_1^2] + \hdots + \expect[S_{m_1}^2] + \expect[S_{m_1+1}^2] + \hdots + \expect[S_{m}^2] \nonumber \\
  &=  \frac{1}{m_1} \expect[(A-\expect[A])^2] + \frac{1}{m_2}\expect[(B-\expect[B])^2] , \label{eq:tmp5329581}
 \end{align}
 where
 \begin{align*}
  A&=\grad(X)\grad(X)^T-\gradapprox(X)\gradapprox(X)^T ,\\
  B&=\gradapprox(X)\gradapprox(X)^T .
 \end{align*}
 Notice that $0\mleq \expect[(A-\expect[A])^2] = \expect[A^2]-\expect[A]^2 \mleq \expect[A^2]$ so that $\|\expect[(A-\expect[A])^2]\|\leq\|\expect[A^2]\|$, where $\mleq$ denotes the Loewner partial order.
 Similarly, one has $\|\expect[(B-\expect[B])^2]\|\leq\|\expect[B^2]\|$. 
 Taking the norm of \eqref{eq:tmp5329581} and using a triangle inequality yields
 $$
  \|\expect[ (S_1+\hdots+S_m)^2]  \| \leq \frac{1}{m_1} \|\expect[A^2]\| + \frac{1}{m_2} \|\expect[B^2]\| .
 $$
 To obtain \eqref{eq:MFcharacteristics_VARIANCE}, it remains to show (i) that $\|\expect[A^2]\| \leq \theta^2 (2+\theta)^2 \normE^2$ and (ii) that $\|\expect[B^2]\| \leq (\beta+\theta)^2(1+\theta)^2\normE^2$.
 Let $u\in\mathbb{R}^{\dimension}$ such that $\|u\|\leq 1$.
 We have
 \begin{align*}
  u^T \expect[ A^2] u 
  &= \expect[ u^T ( \grad(X)\grad(X)^T-\gradapprox(X)\gradapprox(X)^T )^2 u]  \\
  &= \expect[ \|  ( \grad(X)\grad(X)^T-\gradapprox(X)\gradapprox(X)^T ) u \|^2 ]  \\
  &= \expect[ \|  \grad(X)( \grad(X) -\gradapprox(X) )^T u -(\gradapprox(X)-\grad(X)) \gradapprox(X)^T u \|^2 ]  .
 \end{align*}
 Using triangle inequalities, we can write
 \begin{align}
  u^T \expect[ A^2] u 
  &\leq \expect[ ( \|  \grad(X)( \grad(X) -\gradapprox(X) )^T u \|+\|(\gradapprox(X)-\grad(X)) \gradapprox(X)^T u \| )^2 ]  \nonumber\\
  &\leq \expect[ ( \|  \grad(X) \| \, \| \grad(X) -\gradapprox(X) \| +\|\gradapprox(X)-\grad(X)\| \, \| \gradapprox(X) \| )^2 ]  \nonumber\\
  &\leq \expect[ \| \grad(X) -\gradapprox(X) \|^2 (\|  \grad(X) \|+\| \gradapprox(X) \| )^2 ]  \nonumber\\
  &\leq \expect[ \| \grad(X) -\gradapprox(X) \|^2 (2\| \grad(X) \|+\| \grad(X)-\gradapprox(X) \| )^2 ]  \nonumber\\
  &\leq \theta^2 \normE ~ \expect[ (2\| \grad(X) \|+ \theta \sqrt{\normE} )^2 ]  , \label{eq:tmp2672}
 \end{align}
 where for the last inequality we used Assumption \eqref{eq:Assumption THETA}.
 Expanding the last term yields
 \begin{align}
  \expect[  (2\| & \grad(X) \|+  \theta \sqrt{\normE} )^2 ] \nonumber\\
  &= 4\normE +4 \theta \expect[\| \grad(X) \|] \sqrt{\normE} +\theta^2\normE \nonumber\\
  &\leq 4\normE +4 \theta \normE +\theta^2\normE \nonumber\\
  &= (2+\theta)^2 \normE . \label{eq:tmp439823731}
 \end{align}
 Here, we used the relation $\expect[\| \grad(X) \|]^{2} \leq \normE$, which holds true by Jensen's inequality.
 Combining \eqref{eq:tmp2672} and \eqref{eq:tmp439823731} and taking the supremum over $\|u\|\leq 1$ yields
 $$
  \| \expect[ A^2] \| \leq \theta^2 (2+\theta)^2 \normE^2 ,
 $$
 which gives (i). To show (ii), we first notice that Assumptions \eqref{eq:Assumption BETA} and \eqref{eq:Assumption THETA} yield
 \begin{equation}\label{eq:tmp28444}
  \|\gradapprox(X)\|^2 \leq (\| \grad(X)\| + \|\gradapprox(X)-\grad(X)\|)^2 \leq (\beta+\theta)^2\normE,
 \end{equation}
 almost surely.
 Then, for any $u\in\mathbb{R}^{\dimension}$ such that $\|u\|\leq 1$, we have
 \begin{align}
  u^T\expect[B^2]u 
  &= \expect[ u^T (\gradapprox(X)\gradapprox(X)^T)^2 u] \nonumber\\
  &= \expect[ \|\gradapprox(X)\|^2 (\gradapprox(X)^Tu)^2] \nonumber\\
  &\leq (\beta+\theta)^2\normE \expect[(\gradapprox(X)^Tu)^2].
  \label{eq:tmp3257}
 \end{align}
 Using similar arguments, the last term in the above relation satisfies
 \begin{align}
  \expect[(\gradapprox&(X)^Tu)^2] 
  =\expect[  (\gradapprox(X)^T u ) ^2  -  (\grad(X)^T u ) ^2 ] + \expect[(\grad(X)^T u )^2 ]\nonumber\\
  &=\expect[  ( (\gradapprox(X)+\grad(X))^T u ) (\gradapprox(X)-\grad(X)^T u ) ] + \expect[(\grad(X)^T u )^2 ] \nonumber\\
  &\leq\expect[  \| \gradapprox(X)+\grad(X))\| ~ \| \gradapprox(X)-\grad(X)\| ] + \normE \nonumber\\
  &\leq\expect[ (2\| \grad(X) \|+\|\gradapprox(X)-\grad(X)\| )  \| \gradapprox(X)-\grad(X)\| ] + \normE \nonumber\\
  &\leq 2\expect[ \| \grad(X) \|]\theta \sqrt{\normE}  +\theta^2\normE+ \normE \nonumber\\
  &\leq (2\theta+\theta^2+1)^2\normE = (1+\theta)^2\normE. \label{eq:tmp76544} 
 \end{align}
 Combining \eqref{eq:tmp3257} with \eqref{eq:tmp76544} and taking the supremum over $\|u\|\leq 1$ yields 
 $$
  \|\expect[B^2]\|\leq (\beta+\theta)^2(1+\theta)^2\normE^2,
 $$
 which establishes (ii). 
 This proves \eqref{eq:MFcharacteristics_VARIANCE}.
 \\
 
 Now we prove the second part of the property: the bound on the summand \eqref{eq:MFcharacteristics_BOUND}.
 Recall that $S_i$ is defined as $\frac{1}{m_1}(\expect[A]-A)$ if
 $1\leq i\leq m_1$ and as $\frac{1}{m_2} (\expect[B]-B)$ if $m_1\leq i\leq m$. 
 To obtain \eqref{eq:MFcharacteristics_BOUND}, it is then sufficient to show (iii) that $\|\expect[A]-A\|\leq 2\theta (2 \beta +  \theta )\normE$ almost surely and (iv) that $\|\expect[B]-B\| \leq 2(\beta+\theta)^2\normE$ almost surely.
 To show (iii), we first notice that
 \begin{align*}
  \|A\| 
  &= \| \grad(X)\grad(X)^T - \gradapprox(X)\gradapprox(X)^T \| \\
  &= \| \grad(X)(\grad(X)-\gradapprox(X))^T - (\gradapprox(X)-\grad(X))\gradapprox(X)^T \| \\
  &\leq \| \grad(X)\| \| \grad(X)-\gradapprox(X))\| + \| \gradapprox(X)-\grad(X) \| \| \gradapprox(X)\| \\
  &\leq \| \grad(X)-\gradapprox(X))\| ( 2 \| \grad(X)\| +  \|\grad(X)- \gradapprox(X)\| ) \\
  &\leq \theta\sqrt{\normE} (2 \beta \sqrt{\normE} +  \theta \sqrt{\normE}) \\
  &\leq \theta (2 \beta +  \theta )\normE . 
 \end{align*}
 Then, we can write
 $$
  \|\expect[A]-A\| \leq \|\expect[A]\|+\|A\| \leq 2\theta (2 \beta +  \theta )\normE,
 $$
 which gives (iii). Finally we have
 $$
  \|\expect[B]-B\|
  \leq \expect[\|\gradapprox(X)\|^2]+\|\gradapprox(X)\|^2
  \overset{\eqref{eq:tmp28444}}{\leq} 2(\beta+\theta)^2\normE ,
 $$
 which gives (iv) and therefore \eqref{eq:MFcharacteristics_BOUND}. This concludes the proof of Property \ref{prop:MFcharacteristics}.
\end{proof}

To prove our main result, Proposition \ref{prop:MainResult}, it remains to express the bounds of the estimator variance and the summands as a function of $m_{1}$ and to apply the matrix Bernstein theorem.
By Assumption \eqref{eq:m2geqm1}, we have $m_2\geq m_1 \frac{ (\theta+\beta)^2}{\theta(2\beta+\theta)}$ so that, using equation \eqref{eq:MFcharacteristics_BOUND} of Property \ref{prop:MFcharacteristics}, we have that
$$
  \Vert S_i \Vert \leq  \frac{2\theta(2\beta+\theta)}{m_1} \normE =:L,
$$
holds almost surely for all $1\leq i\leq m$.
By Assumption \eqref{eq:m2geqm1}, we also have $m_2\geq m_1 \frac{ (\theta+\beta)^2(1+\theta)^2}{\theta^2(2+\theta)^2}$ so that  equation \eqref{eq:MFcharacteristics_VARIANCE} yields 
$$
  \| \expect[ (S_1+\hdots+S_m)^2] \|
  \leq \frac{2\theta^{2} (2+\theta)^{2}}{m_1} \normE^{2} =: v .
$$
Applying the matrix Bernstein theorem (Theorem \ref{th:bernsteinNONintrinsic}) gives
\begin{align*}
 \expect[\Vert\matrixH-\HMF\Vert]
 &=\expect [\Vert S_1+\hdots+S_m \Vert]  \\
 &\leq  \sqrt{2 v \log(2\dimension) } + \frac{1}{3} L \log(2d) \\
 &= \Big(  \frac{2\theta(2+\theta)\sqrt{\log(2\dimension) }}{\sqrt{m_1}} +  \frac{2\theta(2\beta+\theta) \log(2\dimension) }{3 m_1} \Big) \normE \\
 &\overset{\eqref{eq:m1geq MEAN}}{\leq} (\varepsilon + \varepsilon^2 ) \frac{\normE}{\intdim} = (\varepsilon + \varepsilon^2 ) \|\matrixH\|,
\end{align*}
where, for the last equality, we used the definition of $\intdim= \trace(H)/\|\matrixH\|$ and the fact that $\trace(H)=\trace\expect(\grad(X)\grad(X)^T)=\normE$. 
This proves equation \eqref{eq:MainResult MEAN}.

To show equation \eqref{eq:MainResult PROBA}, we apply the high-probability bound of Theorem \ref{th:bernsteinNONintrinsic} with $t=\varepsilon \|\matrixH\|$. 
We obtain
\begin{align*}
  \proba\big\{\Vert\matrixH-\HMF\Vert \geq \varepsilon\normH \big\} 
  &\leq 2\dimension \exp\left(\frac{-\varepsilon^2 \|\matrixH\|^2/2}{v+L\varepsilon \|\matrixH\|/3}\right) \\
  &= 2\dimension \exp\left(\frac{-\varepsilon^2 m_1}{ 4\theta^2(2+\theta)^2 \intdim^{2}+4/3\theta(2\beta+\theta) \intdim \varepsilon }\right) 
  \overset{\eqref{eq:m1geq PROBA}}{\leq} \eta,
\end{align*}
which is equation \eqref{eq:MainResult PROBA}. This concludes the proof of Proposition \ref{prop:MainResult}.

\subsection{Proof of Proposition \ref{prop:SingleFidelityResult}}\label{sec:SingleFidelityResult}

The proof of Proposition \ref{prop:SingleFidelityResult} relies on the concentration inequality known as the intrinsic dimension matrix Bernstein theorem.
It corresponds to Property~7.3.1 and Corollary~7.3.2 from \cite{tropp_intro_MAL-048}, restricted to the case of real symmetric matrices.

\begin{theorem}[Matrix Bernstein: intrinsic dimension, real symmetric case]
\label{th:bernstein}
  Let $S_1,\hdots,S_m$ be $m$ zero-mean random symmetric matrices in $\mathbb{R}^{\dimension\times\dimension}$. 
  Assume the existence of $L<\infty$ such that 
  \begin{align*}
    \Vert S_i \Vert \leq L ,
  \end{align*}
  almost surely for all $1\leq i \leq m$. 
  Let $V\in\mathbb{R}^{\dimension\times\dimension}$ be a symmetric matrix such that
  $$
   \expect[ (S_1+\hdots+S_m)^2] \mleq V , 
  $$
  and let $\delta_{V} = \trace(V)/\|V\|$ and $v = \Vert V \Vert$.
  Then, we have
  \begin{align*}
    \expect [\Vert S_1+\hdots+S_m \Vert] & \leq C_{0} \left(\sqrt{v\ln(1+2\delta_{V})} + L\ln(1+2\delta_{V})\right),
  \end{align*}
  where $C_{0}$ is an absolute (numerical) constant. 
  Furthermore, for any $t\geq \sqrt{v}+L/3$, we have
  \begin{align*}
    \proba\{\Vert S_1+\hdots+S_m \Vert \geq t\} \leq 8 \delta_{V}\exp\left(\frac{-t^{2}/2}{v+Lt/3}\right).
  \end{align*}
\end{theorem}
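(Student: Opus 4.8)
The plan is to obtain Theorem~\ref{th:bernstein} as the specialization to real symmetric matrices of Tropp's intrinsic-dimension matrix Bernstein inequality (Property~7.3.1 and Corollary~7.3.2 in \cite{tropp_intro_MAL-048}), which is stated for general Hermitian matrices. Every real symmetric matrix is Hermitian, and the operator norm, the trace, the matrix square, and the matrix exponential and logarithm all preserve the real symmetric category and are computed identically whether a real symmetric matrix is regarded as living in $\mathbb{R}^{d\times d}$ or in $\mathbb{C}^{d\times d}$. Consequently the hypotheses ($\Vert S_i\Vert\le L$ almost surely, $\expect[(S_1+\cdots+S_m)^2]\mleq V$) and the auxiliary quantities $v=\Vert V\Vert$ and $\delta_V=\trace(V)/\Vert V\Vert$ are unchanged, and the two conclusions transcribe verbatim. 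At the level of the present paper this reduction \emph{is} the proof; nothing beyond citing \cite{tropp_intro_MAL-048} is required.

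For completeness I would also recall the skeleton of Tropp's argument, since it shows where $v$, $L$ and $\delta_V$ enter. Put $Y=S_1+\cdots+S_m$. First, one controls $\proba\{\Vert Y\Vert\ge t\}$ via the matrix Laplace transform method: for $\theta>0$, Markov's inequality applied to $\trace\exp(\theta Y)$ and to $\trace\exp(-\theta Y)$ bounds the tail by $e^{-\theta t}\,\expect\trace\exp(\theta Y)$. Next, by independence, Lieb's concavity theorem, and the elementary inequality $I+A\mleq e^{A}$ combined with the scalar Bernstein estimate, one gets $\log\expect\exp(\theta S_i)\mleq g(\theta)\,\expect[S_i^2]$ with $g(\theta)=(\theta^2/2)/(1-\theta L/3)$; summing the exponents and using $\sum_i\expect[S_i^2]=\expect[Y^2]\mleq V$ together with monotonicity of $\trace\exp$ yields $\expect\trace\exp(\theta Y)\le\trace\exp(g(\theta)V)$. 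The crucial step is then to bound $\trace\exp(g(\theta)V)$ without reintroducing the ambient dimension: instead of the naive estimate $d\,e^{g(\theta)\Vert V\Vert}$, one uses that $x\mapsto e^{x}-1$ is convex and vanishes at the origin, so that $\trace(e^{M}-I)\le(\trace(M)/\Vert M\Vert)(e^{\Vert M\Vert}-1)$ for positive semidefinite $M$; Tropp's handling of the leftover identity term converts this into a bound proportional to $(1+2\delta_V)$, which is the source of the factor $\ln(1+2\delta_V)$ in the statement. Substituting back and optimizing over $\theta$ exactly as in the ordinary matrix Bernstein proof, but with $d$ replaced by $1+2\delta_V$, then gives the expectation bound with an absolute constant $C_0$ absorbing the slack, and the tail bound valid for $t\ge\sqrt{v}+L/3$.

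The only genuinely nontrivial ingredient, and the step I expect to be the main obstacle in a self-contained treatment, is the intrinsic-dimension trace inequality above: it is precisely where the argument departs from the ordinary matrix Bernstein proof, and obtaining the clean form with $1+2\delta_V$ (rather than, say, $\delta_V$ plus an additive constant) needs the normalization device from \cite{tropp_intro_MAL-048}. Since the present paper uses Theorem~\ref{th:bernstein} only as a black box, the cleanest and entirely adequate presentation is to invoke \cite{tropp_intro_MAL-048} and note the trivial real-symmetric specialization described above; reproducing the internal proof would add length without adding content.
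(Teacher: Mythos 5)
Your overall strategy coincides with the paper's: Theorem~\ref{th:bernstein} is obtained by direct appeal to Property~7.3.1 and Corollary~7.3.2 of \cite{tropp_intro_MAL-048}, and your sketch of the internal Laplace-transform argument is accurate in outline. The one place your account goes astray is the claim that the cited result is stated for Hermitian matrices and that the conclusions ``transcribe verbatim.'' In \cite{tropp_intro_MAL-048} the intrinsic-dimension Bernstein inequality is stated for general rectangular random matrices and requires \emph{two} semidefinite variance proxies, $V_1 \mgeq \expect[ZZ^{*}]$ and $V_2 \mgeq \expect[Z^{*}Z]$, with the intrinsic dimension computed for the block-diagonal matrix $V_1\oplus V_2$. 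The paper's proof consists precisely of observing that, by symmetry of the $S_i$, both proxies may be taken equal to $V$, so the relevant intrinsic dimension is $\trace(V\oplus V)/\Vert V\oplus V\Vert = 2\delta_V$; this is the source of the factor of two in $\ln(1+2\delta_V)$ and of the prefactor $8\delta_V = 4\cdot(2\delta_V)$ in the tail bound. Your attribution of the $2$ in $1+2\delta_V$ to the treatment of the leftover identity term in the trace inequality is therefore not its actual origin, and a literal verbatim transcription of a single-variance Hermitian statement would produce $4\delta_V$ and $\ln(1+\delta_V)$ rather than the constants asserted in the theorem. The repair is exactly the one-line block-diagonal computation above; with it, your argument becomes the paper's proof.
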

\begin{proof}
  Using the definition of $V$ and the symmetry of the matrices $S_{i}$, we have:
  \begin{align}
    &V \mgeq \expect[ (S_1+\hdots+S_m)^2]= \expect[ (S_1+\hdots+S_m)(S_1+\hdots+S_m)^{T}]\\
    &V \mgeq \expect[ (S_1+\hdots+S_m)^2]= \expect[ (S_1+\hdots+S_m)^{T}(S_1+\hdots+S_m)].
  \end{align}
  Defining the matrix $M = \begin{pmatrix}V& 0\\ 0& V \end{pmatrix}$, we have $\delta_{M} = 2\delta_{V}$.
  A direct application of Property~7.3.1 and Corollary~7.3.2 from \cite{tropp_intro_MAL-048} yields the theorem.
\end{proof}

In order to apply the intrinsic dimension matrix Bernstein theorem, we express the difference between the AS matrix $\matrixH$ and the SF estimator $\HSF$ as the sum of independent matrices.
We write $\matrixH-\HSF = S_1 + \hdots + S_{m_1}$ where 
$$
 S_i = \frac{1}{m_1}\big( \matrixH - \grad(X_i)\grad(X_i)^T \big),
$$
for all $1\leq i \leq m_1$. 
Since $\matrixH=\expect[\grad(X)\grad(X)]$, we have
\begin{align*}
 \|S_i\| 
 &\leq \frac{\| \matrixH \| + \| \grad(X_i)\grad(X_i)^T \|}{m_1} 
 \leq \frac{\expect[\|\grad(X)\|^2] + \| \grad(X_i)\|^2}{m_1}     \\
 &\overset{\eqref{eq:Assumption BETA SF}}{\leq} \frac{ \normE + \beta^2\normE }{m_1}
 = \frac{ 1 + \beta^2}{m_1}\normE  =: L
\end{align*}
almost surely for all $1\leq i \leq m_1$. 
By independence of the summands $S_i$ and given $\expect[S_i]=0$, we have
\begin{align*}
 \expect[(S_1+& \hdots+S_{m_1})^2]
 = \expect[S_1^2] + \hdots + \expect[S_{m_1}^2] \\
 &= \frac{1}{m_1} \expect[(\grad(X)\grad(X)^T - \expect[\grad(X)\grad(X)^T]  )^2] \\
 &\mleq \frac{1}{m_1} \expect[(\grad(X)\grad(X)^T)^2] = \frac{1}{m_1} \expect[\|\grad(X)\|^2 \, \grad(X)\grad(X)^T ] \\
 &\overset{\eqref{eq:Assumption BETA SF}}{\mleq} \frac{\beta^2 \normE}{m_1} \expect[  \grad(X)\grad(X)^T ] 
 = \frac{\beta^2 \normE}{m_1} H =: V
\end{align*}
With the above definition of $V$, we have 
\begin{align*}
 & v := \|V\| = \frac{\beta^2 \normE \|\matrixH\|}{m_1}    \\
 & \delta := \frac{\trace(V)}{\|V\|} = \frac{\trace(H)}{\|\matrixH\|} = \intdim.
\end{align*}
Applying the expectation bound of Theorem \ref{th:bernstein} gives
\begin{align*}
 \expect[\Vert & \matrixH-\HSF \Vert] 
 = \expect [\Vert S_1+\hdots+S_m \Vert]  
  \leq C_{0} \left(\sqrt{v\ln(1+2\delta)} + L\ln(1+2\delta)\right) \\
 &\leq C_{0} \left(\sqrt{\frac{\beta^2 \normE \|\matrixH\|}{m_1}\ln(1+2\intdim)} + \frac{ 1 + \beta^2}{m_1}\normE\ln(1+2\intdim)\right) \\
 &\overset{\eqref{eq:m1geq SF}}{\leq} C_{0} \left( \sqrt{\frac{\varepsilon^{-2}\|H\|^2}{C}} + \frac{\varepsilon^{-2}\|H\|}{C} \right)
 \leq (\varepsilon+\varepsilon^2)\|\matrixH\| ,
\end{align*}
where the last inequality is obtained by defining $C=\max\{C_0,C_0^2\}$. 
This yields equation \eqref{eq:SingleFidelityResult MEAN}.

Finally, letting $t=\varepsilon\|\matrixH\|$, the high-probability bound of Theorem \ref{th:bernstein} ensures that 
\begin{align*}
 \proba\{\Vert \matrixH-\HSF \Vert \geq \varepsilon\|\matrixH\| \} 
 &\leq 8 \intdim \exp\left(\frac{-\varepsilon^2\|\matrixH\|^2/2}{v+L\varepsilon\|\matrixH\|/3}\right) \\
 &= 8 \intdim \exp\left(\frac{-\varepsilon^2\|\matrixH\|^2/2}{\frac{\beta^2 \normE \|\matrixH\|}{m_1}+\frac{ 1 + \beta^2}{m_1}\normE \varepsilon\|\matrixH\|/3}\right) \\
 &= 8 \intdim \exp\left(\frac{-m_1\varepsilon^2/2}{\beta^2 \intdim+(1 + \beta^2)\intdim \varepsilon/3}\right)
 \overset{\eqref{eq:m1geq SF PROBA}}{\leq} \eta,
\end{align*}
which is equation \eqref{eq:SingleFidelityResult PROBA}. 
This concludes the proof of Proposition \ref{prop:SingleFidelityResult}.

% \section*{Acknowledgments}
% This work was supported in part by the AFOSR MURI on multi-information sources of multi-physics systems under Award Number FA9550-15-1-0038, program manager Dr.\ Fariba Fahroo.
% % \textcolor{red}{To update.}

\bibliographystyle{siamplain}
\bibliography{biblio}

\begin{thebibliography}{10}

\bibitem{alexandrov2001approximation}
{\sc N.~M. Alexandrov, R.~M. Lewis, C.~R. Gumbert, L.~L. Green, and P.~A.
  Newman}, {\em Approximation and model management in aerodynamic optimization
  with variable-fidelity models}, Journal of Aircraft, 38 (2001),
  pp.~1093--1101.

\bibitem{ballarin2019pod}
{\sc F.~Ballarin, A.~D'Amario, S.~Perotto, and G.~Rozza}, {\em A
  {POD}-selective inverse distance weighting method for fast parametrized shape
  morphing}, International Journal for Numerical Methods in Engineering, 117
  (2019), pp.~860--884.

\bibitem{beskos2018smc}
{\sc A.~Beskos, A.~Jasra, K.~Law, Y.~Marzouk, and Y.~Zhou}, {\em Multilevel
  sequential {M}onte {C}arlo with dimension-independent likelihood-informed
  proposals}, SIAM/ASA Journal on Uncertainty Quantification, 6 (2018),
  pp.~762--786.

\bibitem{brandt1977multi}
{\sc A.~Brandt}, {\em Multi-level adaptive solutions to boundary-value
  problems}, Mathematics of computation, 31 (1977), pp.~333--390.

\bibitem{briggs2000multigridSIAM}
{\sc W.~Briggs, V.~Henson, and S.~McCormick}, {\em A Multigrid Tutorial},
  Society for Industrial and Applied Mathematics, 2000.

\bibitem{constantine2014computing}
{\sc P.~Constantine and D.~Gleich}, {\em Computing active subspaces with
  {M}onte {C}arlo}, arXiv preprint arXiv:1408.0545,  (2015).

\bibitem{constantine2015active}
{\sc P.~G. Constantine}, {\em Active Subspaces: Emerging Ideas for Dimension
  Reduction in Parameter Studies}, Society for Industrial and Applied
  Mathematics, 2015.

\bibitem{constantine2017time}
{\sc P.~G. Constantine and A.~Doostan}, {\em Time-dependent global sensitivity
  analysis with active subspaces for a lithium ion battery model}, Statistical
  Analysis and Data Mining: The ASA Data Science Journal, 10 (2017),
  pp.~243--262.

\bibitem{constantine2014active}
{\sc P.~G. Constantine, E.~Dow, and Q.~Wang}, {\em Active subspace methods in
  theory and practice: applications to kriging surfaces}, SIAM Journal on
  Scientific Computing, 36 (2014), pp.~A1500--A1524.

\bibitem{constantine2016accelerating}
{\sc P.~G. Constantine, C.~Kent, and T.~Bui-Thanh}, {\em Accelerating {M}arkov
  chain {M}onte {C}arlo with active subspaces}, SIAM Journal on Scientific
  Computing, 38 (2016), pp.~A2779--A2805.

\bibitem{cui2014likelihood}
{\sc T.~Cui, J.~Martin, Y.~M. Marzouk, A.~Solonen, and A.~Spantini}, {\em
  Likelihood-informed dimension reduction for nonlinear inverse problems},
  Inverse Problems, 30 (2014), p.~114015.

\bibitem{cui2016scalable}
{\sc T.~Cui, Y.~Marzouk, and K.~Willcox}, {\em Scalable posterior
  approximations for large-scale {B}ayesian inverse problems via
  likelihood-informed parameter and state reduction}, Journal of Computational
  Physics, 315 (2016), pp.~363--387.

\bibitem{forrester2007multi}
{\sc A.~I.~J. Forrester, A.~S{\'o}bester, and A.~J. Keane}, {\em Multi-fidelity
  optimization via surrogate modelling}, in Proceedings of the Royal Society of
  London A: Mathematical, Physical and Engineering Sciences, vol.~463, The
  Royal Society, 2007, pp.~3251--3269.

\bibitem{giles2008multilevel}
{\sc M.~B. Giles}, {\em Multilevel {M}onte {C}arlo path simulation}, Operations
  Research, 56 (2008), pp.~607--617.

\bibitem{hackbusch2013multi}
{\sc W.~Hackbusch}, {\em Multi-grid methods and applications}, Springer Science
  \& Business Media, 2013.

\bibitem{himpe2015data}
{\sc C.~Himpe and M.~Ohlberger}, {\em Data-driven combined state and parameter
  reduction for inverse problems}, Advances in Computational Mathematics, 41
  (2015), pp.~1343--1364.

\bibitem{holodnak2018probabilistic}
{\sc J.~T. Holodnak, I.~C.~F. Ipsen, and R.~C. Smith}, {\em A probabilistic
  subspace bound with application to active subspaces}, arXiv preprint
  arXiv:1801.00682,  (2018).

\bibitem{jefferson2017exploring}
{\sc J.~L. Jefferson, R.~M. Maxwell, and P.~G. Constantine}, {\em Exploring the
  sensitivity of photosynthesis and stomatal resistance parameters in a land
  surface model}, Journal of Hydrometeorology, 18 (2017), pp.~897--915.

\bibitem{ji2018shared}
{\sc W.~Ji, J.~Wang, O.~Zahm, Y.~Marzouk, B.~Yang, Z.~Ren, and C.~K. Law}, {\em
  Shared low-dimensional subspaces for propagating kinetic uncertainty to
  multiple outputs}, Combustion and Flame, 190 (2018), pp.~146--157.

\bibitem{kandasamy2017multi}
{\sc K.~Kandasamy, G.~Dasarathy, J.~Schneider, and B.~P{\'o}czos}, {\em
  Multi-fidelity {B}ayesian optimisation with continuous approximations}, in
  International Conference on Machine Learning, 2017, pp.~1799--1808.

\bibitem{koltchinskii2016asymptotics}
{\sc V.~Koltchinskii and K.~Lounici}, {\em Asymptotics and concentration bounds
  for bilinear forms of spectral projectors of sample covariance}, in Annales
  de l'Institut Henri Poincar{\'e}, Probabilit{\'e}s et Statistiques, vol.~52,
  Institut Henri Poincar{\'e}, 2016, pp.~1976--2013.

\bibitem{kuo2017multilevel}
{\sc F.~Kuo, R.~Scheichl, C.~Schwab, I.~Sloan, and E.~Ullmann}, {\em Multilevel
  quasi-{M}onte {C}arlo methods for lognormal diffusion problems}, Mathematics
  of Computation, 86 (2017), pp.~2827--2860.

\bibitem{lam2017thesis}
{\sc R.~Lam}, {\em Scaling {B}ayesian Optimization for Engineering Design:
  Lookahead Approaches and Multifidelity Dimension Reduction}, PhD thesis,
  Massachusetts Institute of Technology, MA, April 2018.

\bibitem{lam2015multifidelity}
{\sc R.~Lam, D.~Allaire, and K.~E. Willcox}, {\em Multifidelity optimization
  using statistical surrogate modeling for non-hierarchical information
  sources}, in 56th AIAA/ASCE/AHS/ASC Structures, Structural Dynamics, and
  Materials Conference, 2015, p.~0143.

\bibitem{li2011efficient}
{\sc J.~Li, J.~Li, and D.~Xiu}, {\em An efficient surrogate-based method for
  computing rare failure probability}, Journal of Computational Physics, 230
  (2011), pp.~8683--8697.

\bibitem{li2010evaluation}
{\sc J.~Li and D.~Xiu}, {\em Evaluation of failure probability via surrogate
  models}, Journal of Computational Physics, 229 (2010), pp.~8966--8980.

\bibitem{li1991sliced}
{\sc K.~C. Li}, {\em Sliced inverse regression for dimension reduction},
  Journal of the American Statistical Association, 86 (1991), pp.~316--327.

\bibitem{lieberman2010parameter}
{\sc C.~Lieberman, K.~Willcox, and O.~Ghattas}, {\em Parameter and state model
  reduction for large-scale statistical inverse problems}, SIAM Journal on
  Scientific Computing, 32 (2010), pp.~2523--2542.

\bibitem{lukaczyk2014active}
{\sc T.~Lukaczyk, F.~Palacios, J.~J. Alonso, and P.~G. Constantine}, {\em
  Active subspaces for shape optimization}, in Proceedings of the 10th AIAA
  Multidisciplinary Design Optimization Conference, 2014, pp.~1--18.

\bibitem{march2012provably}
{\sc A.~March and K.~E. Willcox}, {\em Provably convergent multifidelity
  optimization algorithm not requiring high-fidelity derivatives}, AIAA
  journal, 50 (2012), pp.~1079--1089.

\bibitem{oberai2003solution}
{\sc A.~A. Oberai, N.~H. Gokhale, and G.~R. Feij{\'o}o}, {\em Solution of
  inverse problems in elasticity imaging using the adjoint method}, Inverse
  problems, 19 (2003), pp.~297--313.

\bibitem{palacios2013stanford}
{\sc F.~Palacios, J.~J. Alonso, K.~Duraisamy, M.~Colonno, J.~Hicken,
  A.~Aranake, A.~Campos, S.~Copeland, T.~D. Economon, A.~Lonkar, et~al.}, {\em
  Stanford university unstructured ({SU2}): An open-source integrated
  computational environment for multi-physics simulation and design}, in 51st
  AIAA Aerospace Sciences Meeting and Exhibit, 2013.

\bibitem{peherstorfer2016multifidelity}
{\sc B.~Peherstorfer, T.~Cui, Y.~Marzouk, and K.~E. Willcox}, {\em
  Multifidelity importance sampling}, Computer Methods in Applied Mechanics and
  Engineering, 300 (2016), pp.~490--509.

\bibitem{peherstorfer2017combining}
{\sc B.~Peherstorfer, B.~Kramer, and K.~E. Willcox}, {\em Combining multiple
  surrogate models to accelerate failure probability estimation with expensive
  high-fidelity models}, Journal of Computational Physics, 341 (2017),
  pp.~61--75.

\bibitem{PWG17MultiSurvey}
{\sc B.~Peherstorfer, K.~Willcox, and M.~Gunzburger}, {\em Survey of
  multifidelity methods in uncertainty propagation, inference, and
  optimization}, SIAM Review,  (2017).

\bibitem{peherstorfer2016optimal}
{\sc B.~Peherstorfer, K.~E. Willcox, and M.~Gunzburger}, {\em Optimal model
  management for multifidelity {M}onte {C}arlo estimation}, SIAM Journal on
  Scientific Computing, 38 (2016), pp.~A3163--A3194.

\bibitem{poloczek2017multi}
{\sc M.~Poloczek, J.~Wang, and P.~Frazier}, {\em Multi-information source
  optimization}, in Advances in Neural Information Processing Systems, 2017,
  pp.~4289--4299.

\bibitem{qian2018multifidelity}
{\sc E.~Qian, B.~Peherstorfer, D.~O'Malley, V.~Vesselinov, and K.~E. Willcox},
  {\em Multifidelity {M}onte {C}arlo estimation of variance and sensitivity
  indices}, SIAM/ASA Journal on Uncertainty Quantification, 6 (2018),
  pp.~683--706.

\bibitem{russi2010uncertainty}
{\sc T.~M. Russi}, {\em Uncertainty quantification with experimental data and
  complex system models}, PhD thesis, UC Berkeley, 2010.

\bibitem{salmoiraghi2016isogeometric}
{\sc F.~Salmoiraghi, F.~Ballarin, L.~Heltai, and G.~Rozza}, {\em Isogeometric
  analysis-based reduced order modelling for incompressible linear viscous
  flows in parametrized shapes}, Advanced Modeling and Simulation in
  Engineering Sciences, 3 (2016), p.~21.

\bibitem{saltelli2008global}
{\sc A.~Saltelli, M.~Ratto, T.~Andres, F.~Campolongo, J.~Cariboni, D.~Gatelli,
  M.~Saisana, and S.~Tarantola}, {\em Global sensitivity analysis: the primer},
  John Wiley \& Sons, 2008.

\bibitem{samarov1993exploring}
{\sc A.~M. Samarov}, {\em Exploring regression structure using nonparametric
  functional estimation}, Journal of the American Statistical Association, 88
  (1993), pp.~836--847.

\bibitem{swersky2013multi}
{\sc K.~Swersky, J.~Snoek, and R.~P. Adams}, {\em Multi-task {B}ayesian
  optimization}, in Advances in neural information processing systems, 2013,
  pp.~2004--2012.

\bibitem{teckentrup2015multilevel}
{\sc A.~L. Teckentrup, P.~Jantsch, C.~G. Webster, and M.~Gunzburger}, {\em A
  multilevel stochastic collocation method for partial differential equations
  with random input data}, SIAM/ASA Journal on Uncertainty Quantification, 3
  (2015), pp.~1046--1074.

\bibitem{tezzele2018combined}
{\sc M.~Tezzele, F.~Ballarin, and G.~Rozza}, {\em Combined parameter and model
  reduction of cardiovascular problems by means of active subspaces and
  {POD}-{G}alerkin methods}, in Mathematical and Numerical Modeling of the
  Cardiovascular System and Applications, Springer, 2018, pp.~185--207.

\bibitem{tipireddy2014basis}
{\sc R.~Tipireddy and R.~Ghanem}, {\em Basis adaptation in homogeneous chaos
  spaces}, Journal of Computational Physics, 259 (2014), pp.~304--317.

\bibitem{tropp_intro_MAL-048}
{\sc J.~A. Tropp}, {\em An introduction to matrix concentration inequalities},
  Foundations and Trends in Machine Learning, 8 (2015), pp.~1--230.

\bibitem{tyagi2014learning}
{\sc H.~Tyagi and V.~Cevher}, {\em Learning non-parametric basis independent
  models from point queries via low-rank methods}, Applied and Computational
  Harmonic Analysis, 37 (2014), pp.~389--412.

\bibitem{ullmann2015multilevel}
{\sc E.~Ullmann and I.~Papaioannou}, {\em Multilevel estimation of rare
  events}, SIAM/ASA Journal on Uncertainty Quantification, 3 (2015),
  pp.~922--953.

\bibitem{zahm2018gradient}
{\sc O.~Zahm, P.~Constantine, C.~Prieur, and Y.~Marzouk}, {\em Gradient-based
  dimension reduction of multivariate vector-valued functions}, arXiv preprint
  arXiv:1801.07922,  (2018).

\bibitem{zahm2018certified}
{\sc O.~Zahm, T.~Cui, K.~Law, A.~Spantini, and Y.~Marzouk}, {\em Certified
  dimension reduction in nonlinear {B}ayesian inverse problems}, arXiv preprint
  arXiv:1807.03712,  (2018).

\end{thebibliography}
\end{document}